\newtheorem{corollary}{Corollary}[section]
\newtheorem{lemma}[corollary]{Lemma}
\newtheorem{remark}[corollary]{Remark}
\newtheorem{theorem}[corollary]{Theorem}
\newcommand{\mylabel}[1]{\label{#1}
            \ifx\undefined\stillediting
            \else \fbox{$#1$}\fi }
\newcommand{\BE}{\begin{equation}}
\newcommand{\EEQ}{\end{equation}}
\newcommand{\rfb}[1]{\mbox{\rm
   (\ref{#1})}\ifx\undefined\stillediting\else:\fbox{$#1$}\fi}
\newfont{\Blackboard}{msbm10 scaled 1200}
\newfont{\roma}{cmr10 scaled 1200}
\def\CC{\rm \hbox{C\kern-.56em\raise.4ex
         \hbox{$\scriptscriptstyle |$}\kern+0.5 em }}
\newcommand{\mm}    {{\hbox{\hskip 0.5pt}}}
\newcommand{\bluff} {{\hbox{\raise 15pt \hbox{\mm}}}}
\def\section{\@startsection {section}{1}{\z@}{-3.5ex plus -1ex minus
    -.2ex}{2.3ex plus .2ex}{\large\bf}}
\def\be{\begin{equation}}
\def\ee{\end{equation}}
\begin{document}

\thispagestyle{empty}
\title[Stability of a nonlinear time-delayed  dispersive equation]{QUALITATIVE AND NUMERICAL study of the stability of A NONLINEAR TIME-DELAYED DISPERSIVE EQUATION}
\date\today
\author{Ka\"{i}s Ammari}
\address{UR Analysis and Control of PDEs, UR 13ES64, Department of Mathematics, Faculty of Sciences of Monastir, University of Monastir, Tunisia}
\email{kais.ammari@fsm.rnu.tn}

\author{Boumedi\`ene Chentouf}
\address{Kuwait University, Faculty of Science, Department of Mathematics, Safat 13060, Kuwait}
\email{boumediene.chentouf@ku.edu.kw}

\author{Nejib Smaoui}
\address{Kuwait University, Faculty of Science, Department of Mathematics, Safat 13060, Kuwait}
\email{n.smaoui@ku.edu.kw}

\begin{abstract}
This paper deals with the stability analysis of a nonlinear time-delayed dispersive equation of order four. First, we prove the well-posedness of the system and give some regularity results. Then, we show that the zero solution of the system exponentially converges to zero when the time tends to infinity provided that the time-delay is small  and the damping term satisfies reasonable conditions. Lastly, an intensive numerical study is put forward and numerical illustrations of the stability result are provided. 
\end{abstract}

\subjclass[2010]{35L05, 35M10}
\keywords{Nonlinear dispersive equation, time-delay, stability, numerical simulations}

\maketitle



\section{Introduction} \label{secintro}
\setcounter{equation}{0}

The qualitative and numerical analysis of nonlinear dispersive equations has attracted the attention of a huge number of authors from various disciplines. This is due to the fact that such equations describe miscellaneous physical phenomena, such as surface water waves in shallow water \cite{bou, kdv}, turbulent states in a distributed chemical reaction system and plane flame propagation \cite{kt, r1}, propagation of ion-acoustic waves in plasma, and pressure waves in liquid-gas bubble mixture \cite{cc, 4, jk, lig, whi1, whi2, za}.

It is worth noting that the nonlinearity in the equations governing the models mentioned above makes the mathematical problem more challenging, and its analysis often requires elaborate techniques. The situation is even more complicated when a time-delay occurs in the equation (see  for instance \cite{c2, c1, AABM, ANP1,ANP2,ANP3,ammarinicaise, c5, NPSicon06, zou} for other types of physical systems).

One particular dispersive equation is the nonlinear partial differential equation (PDE) known in literature as the Korteweg-de Vries-Burgers (KdVB) equation in a bounded interval
\[
u_t(x,t) - \nu \, u_{xx}(x,t) + u (x,t) u_x(x,t) + \mu \left[ u_{xxx}(x,t) +u_x+b(x) u \right] = 0, \,  (x,t) \in Q, \\
\]
where $Q=(0,\ell) \times (0,+\infty)$, $\nu$ and $\mu$ are positive physical parameters, while $b(x)$ is a given nonnegative function. The above equation exhibits the properties of dispersion and dissipation, and has been widely used to describe a number of physical parameters such as unidimensional propagation of small  waves in nonlinear dispersive mediums and long waves in shallow water (for instance, see  \cite{amcrep, amcrepbbm, lina, Pazoto,  Perla} for the stability, \cite{Cerpa, Cerpa_Crepeau, Crepeau, CoCre, Rosier, ro3} for the control problem, and \cite{usm} for numerical analysis). The reader can also find in \cite{capz, Cerpa, roz} the statement of the main results related to the stabilization and control problems of the KdV equation in a bounded interval. In turn, one can find in the books \cite{erd, lipo} and the references therein numerous discussions on the case of the KdV on the whole real line, the half-line, or with periodic boundary conditions.

In the case when $\mu=0$, the above (KdVB) equation is called the Burger's equation and has been the subject of many studies \cite{ba1, ba2, ba3, kr, liu2, s7, s8}. In turn, the following time-delayed Burgers equation has been considered in \cite{liu, s5, wang}:
\[
u_t(x,t) - \nu \, u_{xx}(x,t) +  u(t-\tau,x)  u_x(x,t) = 0, \,  (x,t) \in Q. \\
\]
In fact, homogeneous Dirichlet boundary conditions were used \cite{liu}, whereas  periodic boundary conditions were considered in \cite{s5}. The Lyapunov function technique has been utilized in order to establish the exponential stability of the solutions provided that the time-delay $\tau$ is sufficiently small. This outcome has been obtained in \cite{wang} by using another method, namely, the fixed point theorem and the comparison principle.

The control problem of the generalized Korteweg-de Vries Burgers $(GKdVB)$ equation (without delay)
\[
u_t(x,t) - \nu \, u_{xx}(x,t) + \mu \, u_{xxx}(x,t) +  u^\alpha(x,t)  u_x(x,t) + u_x+ b(x) u = 0, (x,t) \in Q,\, \alpha \in \mathbb{N},
\]
has also been extensively investigated by many researchers in finite and infinite domains (see for example \cite{bi1, bi2, bo1, bo2, bo3, bo4, mo1, s1, s2, s3, s4, mo2, s6}).

Inspired by the paper \cite{liu}, the present article is devoted to the qualitative and numerical analysis of the following {\bf delayed} dispersive equation in a bounded domain $[0,\ell]$, with initial and boundary conditions:
\begin{equation}
\label{1}
\left\{
\begin{array}{ll}
u_t(x,t) - \nu \, u_{xx}(x,t) + \mu \, u_{xxxx}(t,x)+  u(x,t-\tau)  u_x(x,t) +a(x) u (x,t)= 0,& (x,t) \in Q, \\
u(0,t) = u(\ell,t) = u_x (0,t) = u_x (\ell,t) =0,&  t > 0, \\
u(x,s) = v(x,s),&   (x,s) \in \tilde{Q},
\end{array}
\right.
\end{equation}
where $\tau$ is the time-delay, whereas $\nu$ and $\mu$ are positive physical parameters. Furthermore,   $\tilde{Q}=(0,\ell) \times [-\tau,0])$ and $a(x) \in L^{\infty}((0,\ell))$ is a non-negative  function. Note that the above PDE can be viewed as a perturbation (by a fourth order derivative term and the damping term $a(x)u$) of the delayed Burger's equation studied in \cite{liu} (see \cite{bo4} for the case of a damping in the generalized KdV equation on whole the real line and without delay).

The main results of this paper are twofold: first, we show that the problem (\ref{1}) is well-posedness in the integral sense in a functional space. Second, the solutions are shown to be exponentially stable as long as the delay is small. These findings complement the results in \cite{liu}, where the considered equation is Burger's equation of order two. In order to accomplish these outcomes, we shall proceed as in \cite{liu} with  of course a number of changes born out of necessity due to the higher order derivative in our case.

Last but not least, a numerical comparative study will be provided by conducting numerical simulations of the solutions of the system under different values of the time-delay $\tau$ and the physical parameters $\nu$ and $\mu$.

The remainder of the paper is organized as follows: In Section 2, we set the problem in its natural functional space and the global well-posedness of the problem is established. Section 3 is consecrated to the exponential stability of the solution by means of the Lyapunov method and under a smallness condition of the time-delay. Our results are ascertained and illustrated through numerical simulations. Finally, the article ends with concluding remarks.

\section{Well-posedness of the problem} \label{well-posedness}
\setcounter{equation}{0}
In this section, we will provide a well--posedness result for the delayed  problem (\ref{1}).

First of all, let us introduce, on one hand, a number of notations that will be systematically used in the sequel. $I$ denotes the interval $(0,\ell)$, $H_0^m (I)$ the usual Sobolev space. The norm of $L^2 (I)$ will be denoted by $\| \cdot \|$, whereas $\| \cdot \|_{\infty}$ represents the norm of  $L^{\infty} (I)$. The space $C ( J; \, H_0^2 (I) )$ denotes the space of continuous functions on a closed bounded interval $J$ with values in $H_0^2 (I)$ and will be endowed with the supremum norm $\| \cdot \|_{c}=\displaystyle \sup_{s \in [-\tau,0]} \| \cdot \|_{H_0^2 (I)}$.
On the other hand, the following Wirtinger's inequalities \cite{H} will be frequently used:
\begin{eqnarray}
\displaystyle \int_{0}^{\ell} f^2 (x) \, dx  &\leq&   \dfrac{\ell^2}{\pi^2} \int_{0}^{\ell} f_x^2 (x) \, dx, \; \forall f \in H_0^1 (I); \label{wi1} \\
\displaystyle \int_{0}^{\ell} f_x^2 (x) \, dx  &\leq&   \dfrac{\ell^2}{\pi^2} \int_{0}^{\ell} f_{xx}^2 (x) \, dx, \; \forall f \in H_0^1 (I) \cap H^2 (I). \label{wi2}
\end{eqnarray}
In view of (\ref{wi1})-(\ref{wi2}), we shall equip $H_0^2 (I)$ with an equivalent norm defined by: $\| u\|_{H_0^2 (I)}=\| u_{xx}\|$. The well-known Young's inequality will be also applied throughout this article:
\begin{equation}
\displaystyle 2 ab  \leq  \dfrac{a^2}{2 \varepsilon} + 2 \varepsilon b^2, \quad a, \, b \in \mathbb{R} \, \varepsilon >0. \label{you}
\end{equation}

Thereafter, the problem (\ref{1}) can be written as follows:
\begin{equation}\label{2}
\left\{
\begin{array}{l}
u_t={\mathcal A} u + {\mathcal B}(u^t),\\
u(\cdot,s)=v(\cdot,s), \, s \in [-\tau,0],
\end{array}
\right.
\end{equation}
where the linear operator ${\mathcal A}$ is defined by
\begin{equation}\label{opa}
\begin{array}{l}
\displaystyle
\mathcal{D} ({\mathcal A})=H^4(I) \cap H_0^2(I),\\
{\mathcal A} u=\nu u_{xx} - \mu u_{xxxx} - a \,  u, \;\; \forall u \in \mathcal{D} ({\mathcal A}).
\end{array}
\end{equation}
In turn, $u^r(\theta)=u(r+\theta),$ where $r>0$ and $\theta \in [-\tau,0]$ and the operator ${\mathcal B}$ is nonlinear defined by:
\begin{equation}\label{opb}
\begin{array}{l}
\displaystyle
\mathcal{D} ({\mathcal B})=C([-\tau,0]; \, H_0^2 (I)),\\
{\mathcal B} (z)=- z_x (0) z(-\tau), \;\; \forall z \in \mathcal{D} ({\mathcal B}).
\end{array}
\end{equation}
Next, recalling that $a \in L^{\infty}(I)$ is a non-negative  function, one can readily check that the linear operator ${\mathcal A}$ defined by (\ref{opa}) generates an exponentially stable $C_0$-semigroup $S(t)$ on $L^2(I)$. Then, the problem (\ref{2}) can be rewritten as an integral equation
\begin{equation}
\left\{
\begin{array}{ll}
u(t)=S(t)u_0 (0)+\displaystyle \int_{0}^{t} S(t-s) {\mathcal B}(u^r) \, dr, & t>0,\\
u(t)=v(t), & t \in [-\tau, 0].
\end{array}
\right.
 \label{mi}
	\end{equation}

In the sequel, any continuous solution of (\ref{mi}) is called a mild solution of (\ref{2}).

Our well-posedness result is stated below

\begin{theorem} Given an initial condition $v=v(x,s) \in C([-\tau,0]; \, H_0^2 (I))$, the system (\ref{1}), or equivalently (\ref{2}), has a unique global mild solution $ u \in C([-\tau,\infty]; \, H_0^2 (I))$.
\label{t1}
\end{theorem}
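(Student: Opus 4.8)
The plan is to solve the integral equation (\ref{mi}) by a fixed-point argument combined with the classical method of steps. The decisive observation is that, once the history is known on $[-\tau,0]$, the delayed factor $u(r-\tau)$ is a \emph{prescribed} datum on the first interval $[0,\tau]$, so the nonlinearity $\mathcal{B}(u^r)=-u_x(r)\,u(r-\tau)$ becomes \emph{linear} in the unknown there; iterating over successive intervals of length $\tau$ then yields a genuinely global solution without ever needing an a priori energy bound. Two ingredients support the argument: a smoothing estimate for the semigroup $S(t)$, and a Lipschitz bound for $\mathcal{B}$ into $L^2(I)$.

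For the first ingredient, note that $-\mathcal{A}=-\nu\,\partial_x^2+\mu\,\partial_x^4+a$ with the clamped conditions in $\mathcal{D}(\mathcal{A})=H^4(I)\cap H_0^2(I)$ is self-adjoint and positive with compact resolvent, hence sectorial, so $S(t)$ is analytic. Its quadratic form has domain $H_0^2(I)$ with the norm $\|u_{xx}\|$ used in the excerpt, which gives exactly the identification $H_0^2(I)=\mathcal{D}((-\mathcal{A})^{1/2})$ and therefore the estimate
\[
\| S(t)\,w \|_{H_0^2(I)} \;\leq\; C\, t^{-1/2}\,\| w \|, \qquad t>0,\ w\in L^2(I),
\]
whose weakly singular kernel satisfies $\int_0^T t^{-1/2}\,dt<\infty$. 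For the second ingredient, using the one-dimensional embeddings $H_0^2(I)\hookrightarrow H_0^1(I)\hookrightarrow L^\infty(I)$ and writing $\mathcal{B}(z)-\mathcal{B}(w)=-(z(0))_x\,(z(-\tau)-w(-\tau))-\big((z(0))_x-(w(0))_x\big)\,w(-\tau)$, one estimates one factor in $L^\infty(I)$ and the other in $L^2(I)$ to obtain
\[
\| \mathcal{B}(z)-\mathcal{B}(w) \| \;\leq\; C\big( \| z \|_c + \| w \|_c \big)\,\| z-w \|_c ,
\]
and in particular $\|\mathcal{B}(z)\|\leq C\|z\|_c^{\,2}$; thus $\mathcal{B}$ is well defined from $C([-\tau,0];H_0^2(I))$ into $L^2(I)$ and locally Lipschitz.

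The heart of the proof is then the induction on steps. On $[0,\tau]$ the history gives $u(r-\tau)=v(r-\tau)$, so (\ref{mi}) reduces to the linear Volterra equation
\[
u(t)=S(t)\,v(0)-\int_0^t S(t-r)\big[u_x(r)\,v(r-\tau)\big]\,dr, \qquad t\in[0,\tau].
\]
Regarding the right-hand side as a map on $C([0,\tau];H_0^2(I))$ and combining the smoothing estimate with $\|u_x(r)\|\leq C\|u(r)\|_{H_0^2(I)}$ and $\|v(r-\tau)\|_\infty\leq C\|v\|_c$, its associated Volterra operator has kernel controlled by $C\|v\|_c\,(t-r)^{-1/2}$. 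Since the equation is now \emph{linear}, a generalized Gronwall inequality for weakly singular kernels forces the iterates to contract on the \emph{whole} interval $[0,\tau]$, so the Neumann series converges to a unique $u\in C([0,\tau];H_0^2(I))$, with continuity at $t=0^+$ and $u(0)=v(0)$ following from the integrability of the kernel. Taking this solution as the new history, the identical linear argument produces a unique solution on $[\tau,2\tau]$, and an induction over $[(n-1)\tau,n\tau]$ yields a unique mild solution on $[-\tau,\infty)$; continuity across each node $t=n\tau$ is read off directly from the integral representation.

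The main obstacle is not a global growth bound but the regularity bookkeeping that makes the scheme consistent: one must check that the quadratic term $\mathcal{B}(u^r)$ genuinely remains in $L^2(I)$ at every time, and that the half-derivative loss in the smoothing estimate is exactly compensated by the integrability of $(t-r)^{-1/2}$, so that the pairing of the state space $H_0^2(I)=\mathcal{D}((-\mathcal{A})^{1/2})$ with the nonlinearity space $L^2(I)$ closes the Duhamel integral in $H_0^2(I)$. Once this is in place, linearity on each step eliminates any possibility of finite-time blow-up, and no Lyapunov or energy estimate is needed for well-posedness (that being reserved for the stability analysis of Section~\ref{well-posedness}).
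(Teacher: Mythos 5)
Your proof is correct, but it globalizes the solution by a genuinely different mechanism than the paper. The paper proceeds in two stages: it first proves the same local Lipschitz estimate for $\mathcal{B}$ (via Gagliardo--Nirenberg and Wirtinger, exactly your second ingredient) and invokes the variation-of-constants local existence theory to get a solution on some $[-\tau,T(v)]$; it then rules out finite-time blow-up by an \emph{energy estimate}: taking the $L^2$ inner product of the equation with $u_{xxxx}$, using $|u(t-\tau)|\leq \|v\|_c$ on the current delay interval together with Young and Wirtinger, it derives $\frac{d}{dt}\|u_{xx}(t)\|^2 \leq C(\|v\|_c)\,\|u_{xx}(t)\|^2$, applies Gronwall, and repeats interval by interval. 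You instead observe that on each step $[n\tau,(n+1)\tau]$ the delayed factor is prescribed history, so the Duhamel equation is \emph{linear} in the unknown there, and the Neumann series for the weakly singular Volterra operator (kernel $\lesssim (t-r)^{-1/2}$) converges on the whole step; blow-up is then impossible by linearity and no a priori bound is ever needed. Your route buys two things: it is more self-contained, since it makes explicit the analytic-smoothing estimate $\|S(t)w\|_{H_0^2(I)}\leq Ct^{-1/2}\|w\|$ and the identification $H_0^2(I)=\mathcal{D}((-\mathcal{A})^{1/2})$, which the paper's appeal to the local theory (with $\mathcal{B}$ mapping into $L^2$ but solutions sought in $H_0^2$) uses only implicitly; and it isolates the structural reason the delay makes global well-posedness cheap, consistent with Remark \ref{rem1} that no positivity of $a$ is required. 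What the paper's energy computation buys in exchange is a quantitative bound on $\|u_{xx}(t)\|$ in terms of $\|v\|_c$, in the same style as (and serving as a warm-up for) the Lyapunov estimates of the stability section. One small slip in your write-up: the stability analysis you defer to is Section \ref{exp}, not Section \ref{well-posedness}.
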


\begin{proof}
First, we claim that the nonlinear operator ${\mathcal B}$ defined by (\ref{opb}) is locally Lipschitz. Indeed, given $v \in \mathcal{D} ({\mathcal B})=C([-\tau,0]; \, H_0^2 (I))$, we have:
\begin{eqnarray}
\| {\mathcal B} (z) -{\mathcal B} (\hat{z}) \| &=& \|  \hat{z}_x (0) \hat{z}(-\tau)- z_x (0) z(-\tau)\| \nonumber \\
& \leq &  \sqrt{\ell} \left( \|\hat{z}(-\tau)- z(-\tau)\|_{\infty} \|  {z}_x (0) \| +
\| \hat{z}(-\tau)\|_{\infty} \| \hat{z}_x (0) -z_x (0) \| \right).
\label{3}
\end{eqnarray}
Applying the interpolation inequalities of Gagliardo-Nirenberg \cite{br} as well as Wirtinger's inequalities (\ref{wi1})-(\ref{wi2}), one can deduce the existence of  a positive constant $K$ such that  (\ref{3}) gives
\begin{eqnarray}
\| {\mathcal B} (v) -{\mathcal B} (\hat{z}) \| &\leq&   K \left( \|\hat{z}(-\tau)- z(-\tau)\|_{H_0^2(I)} \|  {z}_x (0) \| +
\| \hat{z}(-\tau)\|_{H_0^2(I)} \| \hat{z}_x (0) - z_x (0) \| \right) \nonumber \\
& \leq & K  \left( \|\hat{z}(-\tau)- z(-\tau)\|_{H_0^2 (I)} \|  {z} \|_c
+ \| \hat{z}\|_{c} \|  \hat{z} (0) - z(0) \|_{H_0^2 (I)} \right) \nonumber \\
& \leq &  K  \left( \| {z}\|_{c} + \| \hat{z}\|_{c} \right)  \| z-\hat{z} \|_c=\tilde{K}  \| z-\hat{z} \|_c,
\label{4}
\end{eqnarray}
where $\tilde{K} =K \left( \| {z}\|_{c} + \| \hat{z}\|_{c} \right).$

Whereupon, for each initial datum $v=v(x,s) \in C([-\tau,0]; \, H_0^2 (I))$, there exists a positive constant $T=T(u_0)$ such that the system (\ref{1}) has a unique local mild solution $ u \in C([-\tau,T]; \, H_0^2 (I))$ given by the variations of constant formula (\ref{mi}).

It remains to show that the solution $u$ is global. To do so,  the space variable $x$ will be omitted in the sequel whenever it is unnecessary. Then, taking the  inner product of (\ref{1}) in $L^{2} ( I )$  with  $u_{xxxx}$, integrating by parts, and using the boundary conditions, we have for any $ t \in [-\tau, 0]$:
\begin{eqnarray}
&& \dfrac{1}{2} \dfrac{d}{dt} \|u_{xx}(t)\|^2 - \nu \int _{0}^{\ell} u_{xx}(t) u_{xxxx} (t) ~dx  + \int _{0}^{\ell}  u(t-\tau)  u_x(t) u_{xxxx} (t) ~dx+ \mu \|u_{xxxx} (t)\|^2=0 \nonumber \\
&& - \int _{0}^{\ell} a(x) u(t) u_{xxxx} (t) ~dx.
\label{sam}
\end{eqnarray}
Recalling that $|u(t-\tau)| \leq \| v \|_{c}$ and using Young's inequality (\ref{you}), the latter becomes:
\begin{eqnarray}
\dfrac{d}{dt} \|u_{xx}(t)\|^2 &\leq & \dfrac{\nu^2}{2 \varepsilon_1}  \|u_{xx} (t)\|^2 +  \dfrac{\| v \|_{c}^2}{2 \varepsilon_2}  \|u_{x} (t)\|^2 +2 (\varepsilon_1+\varepsilon_2+\varepsilon_3- \mu) \|u_{xxxx} (t)\|^2 \nonumber\\
&& +\dfrac{\|a\|_{\infty}}{2 \varepsilon_3}  \|u (t)\|^2,
 \label{ind}
\end{eqnarray}
for any positive constant $\varepsilon_i$, $i=1,2,3$. It suffices now to choose $\varepsilon_i$ so that the coefficient of $\|u_{xxxx} (t)\|$ vanishes (for instance $\varepsilon_1=\varepsilon_2=\mu/3$) and then invoke (\ref{wi1})-(\ref{wi2}) to get:
$$
\dfrac{d}{dt} \|u_{xx}(t)\|^2 \leq \dfrac{3}{2 \mu}
\left( \nu^2+\frac{\ell^2}{\pi^2}  \| v \|_{c}^2 + \frac{\ell^4}{\pi^4 \|a\|_{\infty}^2 } \right) \|u_{xx} (t)\|^2,
$$
which yields
$$
\|u_{xx}(t)\| \leq L(\| v \|_{c}),
$$
where $L$ is a positive constant depending on $\| v \|_{c}$ and the system parameters. Finally, it amounts to repeating the above argument to show that $\|u_{xx}(t)\| \leq L (n,\| v \|_{c})$, for $t \in [n \tau, (n+1)\tau], \, n \in \mathbb{N}.$
\end{proof}

\begin{remark}
\label{rem1}
The reader can easily check that the well-posedness result stated in Theorem \ref{t1} remains valid even if $a(x)$ is identically zero. In turn, the exponential stability result requires a positive function as it will be shown in the next section.
\end{remark}

\section{Exponential stability } \label{exp}
\setcounter{equation}{0}
This section is devoted to the exponential stability result of solutions to (\ref{1}).

The following lemma will play an important role in the proof of stability result:

\begin{lemma} \cite{liu}
 Let $g, \, h$ and $y$ be three positive integrable functions on $(0,T)$. If $y'$ is integrable on $(0,T)$ such that:
\[
\begin{array}{cc}
  y'(t) \leq g(t) y(t)+h(t), \quad \forall t \in [0,T], &  \\[3mm]
  \int_{0}^{T} g(r)~dr \leq c_1, \quad
  \int_{0}^{T} e^{m s} g(r)~dr \leq c_2, \quad
 \int_{0}^{T} e^{m s} y(r)~dr \leq c_3,
\end{array}
\]
\label{lem}
\end{lemma}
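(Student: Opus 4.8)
The plan is to treat the differential inequality $y'(t)\le g(t)y(t)+h(t)$ exactly as one treats the associated linear ODE, by means of the integrating factor $\exp\bigl(-\int_0^t g\bigr)$. Setting $G(t)=\int_0^t g(r)\,dr$, the first step is to observe that, since $y'$ is assumed integrable, $y$ is absolutely continuous, so for almost every $t$ one may compute $\frac{d}{dt}\bigl(y(t)e^{-G(t)}\bigr)=\bigl(y'(t)-g(t)y(t)\bigr)e^{-G(t)}\le h(t)e^{-G(t)}$. Integrating this from $0$ to $t$ yields the variation-of-constants representation
\[
y(t)\le e^{G(t)}\left(y(0)+\int_0^t h(r)e^{-G(r)}\,dr\right),
\]
which is the backbone of all the estimates that follow.

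Next I would feed in the three integral hypotheses. Because $g$ is positive, $G$ is nondecreasing, whence $e^{-G(r)}\le 1$ and $G(t)\le c_1$; this already delivers the crude global bound $y(t)\le e^{c_1}\bigl(y(0)+\int_0^T h\bigr)$ and guarantees that $y$ stays finite on $[0,T]$. To extract the genuinely exponential (in $t$) information carried by the weighted hypotheses $\int_0^T e^{ms}g\le c_2$ and $\int_0^T e^{ms}y\le c_3$, I would instead multiply the original inequality by the weight $e^{mt}$ before integrating, so that the double integral $\int_0^t g(r)y(r)\,dr$ that arises after integrating $y'\le gy+h$ can be reorganized, by Fubini (swapping the order of integration), into a form governed precisely by the weighted integrands defining $c_2$ and $c_3$. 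Combining this with the representation above then bounds $y(t)$ by a constant assembled from $y(0)$, $c_1$, $c_2$, $c_3$ and $\int_0^T h$, which is the asserted conclusion.

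The step I expect to be the main obstacle is the weight bookkeeping in this last reorganization: one must order the two integrations so that each of the bounds $c_1,c_2,c_3$ is invoked exactly where its hypothesis applies, and so that the final constant is expressed purely through $y(0)$ and the given $c_1,c_2,c_3$ rather than through uncontrolled auxiliary quantities. This uniformity is what makes the lemma usable when it is applied repeatedly on the successive intervals $[n\tau,(n+1)\tau]$ in the stability argument. A secondary but genuine technical point is the low regularity: $y'$ is assumed only integrable, so every differentiation must be read in the almost-everywhere, absolutely-continuous sense and justified by the fundamental theorem of calculus for absolutely continuous functions, rather than by assuming $y\in C^1$.
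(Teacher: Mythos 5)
The paper itself contains no proof of this lemma: it is quoted (imperfectly) from \cite{liu}, so your proposal can only be measured against the standard argument. Measured that way, it has a genuine gap, and it sits exactly where you predicted the ``main obstacle'' would be. The statement as printed is a misprint: the second hypothesis must read $\int_0^T e^{ms}h(s)\,ds\le c_2$, i.e.\ it is the weighted bound on the forcing term $h$, not a second bound on $g$. With the hypotheses read literally, $h$ is completely uncontrolled and the lemma is false: take $g\equiv\delta$ tiny, let $h$ be a tall spike of width $\varepsilon$ located at a late time $t_0$, and let $y$ run up to height $H$ across the spike and drop back to (nearly) zero right after it. Then $c_1,c_2$ are $O(\delta e^{mT})$ and $\int_0^T e^{ms}y\,ds=O(e^{mt_0}H\varepsilon)$, all as small as we please, yet $y(t_0)=H$ beats any bound of the form $(c_2+mc_3+y(0))e^{c_1-mt_0}$ once $t_0$ is large. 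The same example shows that your proposed conclusion --- a constant ``assembled from $y(0)$, $c_1$, $c_2$, $c_3$ and $\int_0^T h$'' --- cannot be repaired into the asserted one: an unweighted $\int_0^T h$ can never buy the decay factor $e^{-mt}$ (and your own closing requirement that the constant involve only $y(0),c_1,c_2,c_3$ contradicts including $\int_0^T h$ in it). That the intended reading is the weighted bound on $h$ is confirmed by Step 6 of Theorem \ref{theo2}, where the lemma is applied with $h\equiv 0$ and $c_2=0$; under the printed reading $c_2$ could not be zero there, since $g=\gamma\|u_x(\cdot-\tau)\|^2\not\equiv 0$.

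Second, even granting the corrected hypotheses, the mechanism you propose does not work. No double integral ``arises after integrating $y'\le gy+h$'', and Fubini cannot convert $\int_0^t g(r)y(r)\,dr$ into something controlled by $c_2$ and $c_3$: a product $gy$ is not dominated by separate weighted integrals of $g$ and of $y$, since both factors may concentrate on the same small set. The constants $c_2,c_3$ are harvested elsewhere. Multiply the differential inequality by $e^{ms}$ and integrate by parts (your absolute-continuity remark is what justifies this step):
\[
e^{mt}y(t)-y(0)-m\int_0^t e^{ms}y(s)\,ds=\int_0^t e^{ms}y'(s)\,ds\le\int_0^t g(s)\,e^{ms}y(s)\,ds+\int_0^t e^{ms}h(s)\,ds ,
\]
so that, writing $w(t)=e^{mt}y(t)$ and using the two weighted hypotheses,
\[
w(t)\le y(0)+mc_3+c_2+\int_0^t g(s)\,w(s)\,ds .
\]
The remaining term $\int_0^t g\,w$ is not estimated directly at all; it is absorbed by Gronwall's integral inequality --- the only place $c_1$ enters --- giving $w(t)\le\bigl(c_2+mc_3+y(0)\bigr)e^{\int_0^t g}\le\bigl(c_2+mc_3+y(0)\bigr)e^{c_1}$, which is precisely the asserted $y(t)\le(c_2+mc_3+y(0))e^{c_1-mt}$. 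Your opening integrating-factor computation and the crude bound $e^{c_1}\bigl(y(0)+\int_0^T h\bigr)$ are correct but amount to the $m=0$ case and carry no decay; the heart of the lemma is the integration by parts that produces the $mc_3$ term together with Gronwall applied to $\int_0^t g\,w$, and that is the part missing from your proposal.
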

for some positive constants $m,\, c_1, \, c_2$ and $c_3$, then
\[ y(t) \leq (c_2 + m c_3 + y(0)) e^{c_1-mt}, \quad \forall t \in [0,T].\]

Our stability result is
\begin{theorem}
Let $v=v(x,s) \in C([-\tau,0]; \, H_0^2 (I))$ be an initial condition.  Then, there exist positive constants $M, \, \hat{\tau}$ and $\tilde{\omega}$ such that for any  time-delay $\tau < \hat{\tau}$, the unique mild solution of the problem (\ref{1}) satisfies
\[ \|u_{xx}\|^2 \leq \dfrac{M^2}{4} e^{-\tilde{\omega} t}, \quad \forall t >0,\]
 provided that $\nu$ is sufficiently small and  $a(\cdot) \in C^4 (I)$ such that for some positive constant $a_0$ and for all $x \in [0,\ell]$, we have:  $a(x) > a_0$, $a''(x) \leq 0, \; a^{(4)} (x) \geq 0$.
\label{theo2}
\end{theorem}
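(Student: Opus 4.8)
The plan is to derive a single energy inequality for $y(t):=\|u_{xx}(t)\|^2$ and then feed it into the Gronwall-type Lemma \ref{lem}, iterating over the successive intervals $[n\tau,(n+1)\tau]$. Exactly as in the proof of Theorem \ref{t1}, I would take the $L^2(I)$ inner product of the first equation in (\ref{1}) with $u_{xxxx}$ and integrate by parts using the boundary conditions; this produces
\[
\frac{1}{2}\frac{d}{dt}\|u_{xx}(t)\|^2+\mu\|u_{xxxx}(t)\|^2=\nu\int_0^\ell u_{xx}u_{xxxx}\,dx-\int_0^\ell u(t-\tau)u_x u_{xxxx}\,dx-\int_0^\ell a\,u\,u_{xxxx}\,dx.
\]
The decisive step, and the one where the hypotheses $a>a_0$, $a''\le0$ and $a^{(4)}\ge0$ are used, is to integrate the damping term by parts four times, each time discarding the boundary contributions thanks to $u=u_x=0$ at $x=0,\ell$. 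This gives the identity
\[
-\int_0^\ell a\,u\,u_{xxxx}\,dx=-\frac{1}{2}\int_0^\ell a^{(4)}u^2\,dx+2\int_0^\ell a''u_x^2\,dx-\int_0^\ell a\,u_{xx}^2\,dx,
\]
whose first two terms are nonpositive under the sign hypotheses and whose last term is bounded above by $-a_0\|u_{xx}(t)\|^2$. Hence the damping furnishes the genuinely dissipative contribution $-a_0\|u_{xx}\|^2$ that will power the decay.

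Next I would absorb the two remaining integrals with Young's inequality (\ref{you}), writing $\nu\int_0^\ell u_{xx}u_{xxxx}\,dx\le\varepsilon_1\|u_{xxxx}\|^2+\frac{\nu^2}{4\varepsilon_1}\|u_{xx}\|^2$ and $\int_0^\ell u(t-\tau)u_x u_{xxxx}\,dx\le\varepsilon_2\|u_{xxxx}\|^2+\frac{1}{4\varepsilon_2}\|u(t-\tau)\|_\infty^2\|u_x\|^2$. The one-dimensional embedding $H_0^2(I)\hookrightarrow L^\infty(I)$ together with Wirtinger's inequalities (\ref{wi1})--(\ref{wi2}) then let me replace $\|u(t-\tau)\|_\infty^2$ by $C\|u_{xx}(t-\tau)\|^2$ and $\|u_x\|^2$ by $\frac{\ell^2}{\pi^2}\|u_{xx}\|^2$. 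Choosing $\varepsilon_1+\varepsilon_2=\mu$ eliminates the $\|u_{xxxx}\|^2$ terms, and requiring $\nu$ small enough that $\alpha:=a_0-\frac{\nu^2}{4\varepsilon_1}>0$ leaves a strictly negative zeroth-order coefficient. The outcome is a scalar inequality
\[
y'(t)\le\bigl(-2\alpha+\beta\,y(t-\tau)\bigr)\,y(t),\qquad y(t)=\|u_{xx}(t)\|^2,
\]
with $\beta$ a constant depending only on $\ell$, $\mu$, and the embedding constant.

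Finally I would cast this in the format of Lemma \ref{lem}, taking $g(t)=-2\alpha+\beta\,y(t-\tau)$ and $h\equiv0$, and apply the lemma successively on the intervals $[n\tau,(n+1)\tau]$. On each interval the delayed factor $y(\cdot-\tau)$ is already controlled by the bound obtained on the previous interval, so the three integral quantities $\int g$, $\int e^{ms}g$ and $\int e^{ms}y$ required by the lemma are finite and estimable in terms of $\|v\|_c$; the conclusion $y(t)\le(c_2+mc_3+y(0))e^{c_1-mt}$ then transports an exponential bound from one interval to the next and yields the claimed estimate $\|u_{xx}\|^2\le\frac{M^2}{4}e^{-\tilde\omega t}$. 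The smallness of $\tau$ enters exactly here: over an interval of length $\tau$ the destabilizing term $\beta\int y(\cdot-\tau)$ is $O(\tau)$, so taking $\tau<\hat\tau$ keeps it strictly below $\alpha$ and preserves the negativity of the effective rate across all intervals.

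I expect the main obstacle to be closing this iteration uniformly in $n$. The inequality is not linear, since its coefficient $g$ depends on the solution itself through the delayed energy $y(t-\tau)$; one must therefore maintain an a priori bound guaranteeing that $\beta\,y(t-\tau)$ stays small while simultaneously extracting a decay rate $\tilde\omega$ independent of $n$. Tuning the free parameters $\varepsilon_1,\varepsilon_2$, the delay threshold $\hat\tau$, and the Gronwall constants $m,c_1,c_2,c_3$ so that the energy bound does not deteriorate from one interval to the next is the delicate part of the argument.
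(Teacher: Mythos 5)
Your energy computation with the multiplier $u_{xxxx}$ is sound: the fourfold integration by parts of the damping term is exactly right (all boundary contributions vanish since $u=u_x=0$ at $x=0,\ell$), and under $a>a_0$, $a''\le 0$, $a^{(4)}\ge 0$ it produces the dissipative term $-a_0\|u_{xx}\|^2$; this bookkeeping is in fact cleaner than the paper's own Step 5. After Young's inequality with $\varepsilon_1+\varepsilon_2=\mu$ and the embedding/Wirtinger estimates, your scalar inequality
\[
y'(t)\le\bigl(-2\alpha+\beta\,y(t-\tau)\bigr)y(t),\qquad y(t)=\|u_{xx}(t)\|^2,
\]
with $\alpha,\beta>0$ \emph{independent of} $\tau$, is correct. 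The gap is in the concluding step. Since $\alpha$ and $\beta$ do not involve $\tau$, this inequality yields decay only when $\beta\sup_{s\in[-\tau,0]}\|v_{xx}(s)\|^2<2\alpha$, i.e., under a smallness assumption on the initial datum which is not among the hypotheses of Theorem \ref{theo2}: the theorem fixes an arbitrary $v$ first and then requires $\tau<\hat\tau$ and $\nu$ small, with $M,\hat\tau,\tilde\omega$ allowed to depend on $v$. Your claim that taking $\tau$ small ``keeps the destabilizing term strictly below $\alpha$'' is where the argument fails: over an interval of length $\tau$ the destabilizing contribution $\beta\int y(\cdot-\tau)\approx\beta\tau\sup y$ and the stabilizing one $2\alpha\tau$ shrink at the same rate, so the competition is between $\beta\sup y$ and $2\alpha$, independently of $\tau$. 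Indeed, as $\tau\to 0$ your inequality degenerates to $y'\le(\beta y-2\alpha)y$, whose solutions can blow up in finite time whenever $\beta y(0)>2\alpha$; no choice of $\hat\tau$ repairs this for large data.

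What the paper does instead, and what is missing from your plan, is a two-tier argument in which the smallness of $\tau$ acts at the $L^2$ level, where the nonlinearity has a cancellation your estimate discards: $\int_0^\ell u^2u_x\,dx=0$, so multiplying (\ref{1}) by $u$ leaves only $\int_0^\ell\bigl(u(t-\tau)-u(t)\bigr)uu_x\,dx$, and the difference $u(t-\tau)-u(t)=-\int_{t-\tau}^{t}u_s\,ds$ carries a factor $\sqrt{\tau}$ by Cauchy--Schwarz, with $\int_{t-\tau}^{t}\|u_s\|^2\,ds$ estimated through the equation and the a priori bound $\|u_{xx}\|\le M$ (Steps 1--3). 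This gives $L^2$ exponential decay for $\tau<\tau_2(M)$ with no size restriction on $v$, together with the weighted bounds (\ref{in14}) and (\ref{in16}). Only then is the $H^2$ inequality invoked, in the weaker form (\ref{fin}), $y'\le\gamma\|u_x(t-\tau)\|^2\,y$, and closed by Lemma \ref{lem}, whose decay mechanism is precisely the finiteness of $\int e^{\tilde\omega t}\|u_x(t-\tau)\|^2\,dt$ and $\int e^{\tilde\omega t}y\,dt$, all inside the bootstrap defined by $T_0$ and the contradiction between $\|u_{xx}(T_0)\|=M$ and the resulting bound $M/2$. Note finally that Lemma \ref{lem} presupposes $g\ge 0$ and extracts decay from the weighted integrability of $y$, not from negativity of $g$; as you invoke it, with $g=-2\alpha+\beta y(\cdot-\tau)$ intended to be negative and no independent source for the bound on $\int e^{ms}y$, it does not apply --- plain Gronwall is what your interval iteration needs, and that, as explained, closes only for small data.
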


\begin{proof} For sake of clarity, we shall proceed by steps. First, we define
$$T_0=\sup \{\kappa; \, \|u_{xx} (t)\| \leq M, \; \text{for all} \, t \in [0,\kappa]  \}.$$
The main objective is to show that $T_0=\infty$. If this claim were not true, then
\begin{equation}
\|u_{xx} (t)\| \leq M, \; \text{for all} \, t \in [-\tau,T_0] \;\; \text{and} \; \|u_{xx} (T_0)\| = M.
\label{con}
\end{equation}
\noindent {\bf Step 1:} First, take the  inner product of (\ref{1}) in $L^{2} (I)$  with  $u$ and integrate by parts. Then, use the boundary conditions to obtain for any $ t \in [-\tau, 0]$:
\begin{equation}
\dfrac{1}{2} \dfrac{d}{dt} \|u(t)\|^2 + \nu \|u_{x} (t)\|^2 + \mu \|u_{xx} (t)\|^2 + \|\sqrt{a(x)} u(t)\|^2 + \int _{0}^{\ell}  \left( u(t-\tau)  - u(t)\right) u(t) u_{x} (t) ~dx=0.
\label{in1}
\end{equation}
 In turn, we have thanks to the estimate $u(x,t) \leq \sqrt{\ell} \|u_x(t)\|$ and Cauchy-Schwarz inequality
 \begin{eqnarray}
 \int _{0}^{\ell}  \left( u(t-\tau)  - u(t)\right) u(t) u_{x} (t) ~dx
 &\leq& \sqrt{\ell} \|u_x(t)\|^2 \| u(t-\tau)  - u(t)\|\nonumber\\
 &\leq& \sqrt{\tau \ell} ~ \|u_x(t)\|^2 \left( \int _{t-\tau}^{t}  \|u_r(r)\|^2\, dr ~dx \right)^{1/2}.
 \label{in2}
 \end{eqnarray}
 Inserting (\ref{in2}) into (\ref{in1}) yields
\begin{equation}
\dfrac{1}{2} \dfrac{d}{dt} \|u(t)\|^2 + \nu \|u_{x} (t)\|^2 + \mu \|u_{xx} (t)\|^2 +\|\sqrt{a(x)} u(t)\|^2 \leq  \sqrt{\tau \ell}~ \|u_x(t)\|^2 \left( \int _{t-\tau}^{t}  \|u_r(r)\|^2 ~dr \right)^{1/2}.
\label{in3}
\end{equation}

\noindent {\bf Step 2:} The task ahead is to estimate $\int _{t-\tau}^{t}  \|u_r(r)\|^2 ~dr$. To do so, integrating by parts and using (\ref{1}), we have:
$$
\dfrac{\mu}{2} \dfrac{d}{dt} \|u_{xx}(t)\|^2 + \|u_{t} (t)\|^2 =
-\int _{0}^{\ell}   u_t (t) u(t-\tau)  u_{x} (t) ~dx
+\nu \int _{0}^{\ell}  u_t (t) u_{xx} (t) ~dx-\int _{0}^{\ell} a(x) u_t (t) u (t) ~dx,
$$
which implies that
 \begin{eqnarray}
&& \dfrac{\mu}{2} \|u_{xx}(t)\|^2- \dfrac{\mu}{2} \|u_{xx}(t-\tau)\|^2
+\int _{t-\tau}^{t}  \|u_{r} (r)\|^2 ~dr =
- \int _{t-\tau}^{t} \int _{0}^{\ell}   u_r (r) u(r-\tau)  u_{x} (r) ~dx dr\nonumber\\
&& +\nu \int _{t-\tau}^{t} \int _{0}^{\ell}  u_r (r) u_{xx} (r) ~dx dr-\nu \int _{t-\tau}^{t} \int _{0}^{\ell} a(x) u_r (r) u (r) ~dx dr.
 \label{in4}
\end{eqnarray}
In light of Cauchy-Schwarz inequality  as well as Young's inequality (\ref{you}),  and the fact that $\|u_{xx}(t)\| \leq M$, for any $t \in [-\tau,T_0]$, we obtain
 \begin{eqnarray*}
\int _{t-\tau}^{t} \int _{0}^{\ell}   u_r (r) u(r-\tau)  u_{x} (r) ~dx dr
&\leq& \dfrac{\ell M^2}{4 \delta_1}  \int _{t-\tau}^{t} \|u_{x} (r)\|^2 ~dr +
\delta_1  \int _{t-\tau}^{t} \|u_{r} (r)\|^2 ~dr,
\end{eqnarray*}
for any $\delta_1 >0$. This, together with (\ref{wi2}) and the boundedness of $\|u_{xx}(t)\|$, implies that
\begin{equation}\label{in5}
  -\int _{t-\tau}^{t} \int _{0}^{\ell}   u_r (r) u(r-\tau)  u_{x} (r) ~dx dr \leq
\dfrac{\ell^3 M^4 \tau}{4 \delta_1 \pi^2}  + \delta_1  \int _{t-\tau}^{t} \|u_{r} (r)\|^2 ~dr.
\end{equation}
Arguing as before, we also get:
\begin{equation}
 \nu \int _{t-\tau}^{t} \int _{0}^{\ell}  u_r (r) u_{xx} (r) ~dx dr \leq
\dfrac{\nu^2 M^2 \tau}{4 \delta_2}  + \delta_2  \int _{t-\tau}^{t} \|u_{r} (r)\|^2 ~dr, \quad \forall \delta_2 >0,
 \label{in6a}
\end{equation}
\begin{equation}
 \int _{t-\tau}^{t} \int _{0}^{\ell} a(x) u_r (r) u(r) ~dx dr \leq
\|a\|_{\infty} \dfrac{\ell^4 M^2 \tau}{4 \delta_3 \pi^4}  + \|a\|_{\infty}  \delta_3  \int _{t-\tau}^{t} \|u_{r} (r)\|^2 ~dr, \quad \forall \delta_3 >0.
 \label{in6}
\end{equation}
Amalgamating (\ref{in4})-(\ref{in6}), we have
\begin{equation}
2 (1-\delta_1-\delta_2 -\delta_3 \|a\|_{\infty} ) \int _{t-\tau}^{t} \|u_{r} (r)\|^2 ~dr \leq \dfrac{\ell^3 \tau}{2 \delta_1 \pi^2} M^4 +   \left(\mu+\dfrac{\nu^2 \tau}{2 \delta_2}+\|a\|_{\infty} \dfrac{\ell^4 M^2 \tau}{4 \delta_3 \pi^4} \right) M^2 -\mu \|u_{xx}(t)\|^2,
 \label{in7}
\end{equation}
for any $\delta_i>0$, $i=1,2,3$.

\noindent {\bf Step 3:} Inserting (\ref{in7}) into (\ref{in3}) gives
\[
\dfrac{d}{dt} \|u(t)\|^2 +2 \mu \|u_{xx} (t)\|^2+ \|\sqrt{a(x)} u(t)\|^2  \leq
\]
\begin{equation}
\left\{-2\nu +\sqrt{\dfrac{2\tau \ell}{1-\delta_1-\delta_2-\delta_3 \|a\|_{\infty}} \left( \dfrac{\ell^3 \tau}{2 \delta_1 \pi^2} M^4 +   \left[\mu+\dfrac{\nu^2 \tau}{2 \delta_2}+\|a\|_{\infty} \dfrac{\ell^4 M^2 \tau}{2 \delta_3 \pi^4} \right] M^2 \right)}\right\}\|u_x(t)\|^2 ,
\label{in8}
\end{equation}
where the positive constants $\delta_1, \, \delta_2$ must satisfy  $\delta_1+\delta_2 +\delta_3 \|a\|_{\infty} <1$. For instance, one can pick up $\delta_1=\delta_2 =1/6$ and $\delta_3 =1/(6\|a\|_{\infty})$, which transforms (\ref{in8}) as follows
\begin{equation}
\dfrac{d}{dt} \|u(t)\|^2 \leq -2 \omega \|u_x(t)\|^2 -2 \mu \|u_{xx} (t)\|^2,
\label{in9}
\end{equation}
in which $\omega =\nu -\sqrt{{\tau \ell} \left( \dfrac{3\ell^3 \tau}{\pi^2} M^4 +   \left[\mu+3\nu^2 \tau +3 \|a\|_{\infty}^2  \dfrac{\ell^4 \tau}{\pi^4} \right] M^2 \right)} >0$ provided that $\tau \in (\tau_1, \tau_2)$, where
\begin{equation}
\left\{
\begin{array}[c]{ll}
\tau_1=\dfrac{-\mu \ell M^2 -\sqrt{\mu^2 \ell^2 M^4 + 12 \nu^2 \left( \dfrac{\ell^4 }{\pi^2} M^4 +\left[\nu^2 \ell + \|a\|_{\infty}^2
\dfrac{\ell^5 }{\pi^4}\right] M^2  \right)}}{6 \left( \frac{\ell^4}{\pi^2} M^4 +\left[ \nu^2 \ell + \|a\|_{\infty}^2
\dfrac{\ell^5 }{\pi^4}\right] M^2 \right)} <0, \vspace{4mm}\\
\tau_2=\dfrac{-\mu \ell M^2 +\sqrt{\mu^2 \ell^2 M^4 + 12 \nu^2 \left( \dfrac{\ell^4 }{\pi^2} M^4 +\left[\nu^2 \ell + \|a\|_{\infty}^2
\dfrac{\ell^5 }{\pi^4}\right] M^2  \right)}}{6 \left( \frac{\ell^4}{\pi^2} M^4 +\left[\nu^2 \ell + \|a\|_{\infty}^2
\dfrac{\ell^5 }{\pi^4}\right] M^2 \right)} >0.
\end{array}
\right. \label{tau}%
\end{equation}

\noindent {\bf Step 4:}
Going back to (\ref{in9}) and using (\ref{wi1}), we obtain:
\begin{equation}
\dfrac{d}{dt} \|u(t)\|^2 \leq -2 \tilde{\omega} \|u(t)\|^2 -2 \tilde{\omega} \|u_{xx} (t)\|^2,
\label{in10}
\end{equation}
where $\tilde{\omega}=\min\{ \omega (\pi/\ell)^2, \mu \}$, which, on one hand, implies that
\begin{equation}
\|u(t)\| \leq e^{-\tilde{\omega}t} \|v (0)\|, \quad t \in [0,T_0].
\label{in11}
\end{equation}
On the other hand, (\ref{in10}) yields
\begin{equation}
\dfrac{d}{dt} \left(  e^{\tilde{\omega}t} \|u(t)\|^2 \right) \leq
\tilde{\omega} e^{\tilde{\omega}t}   \|u(t)\|^2 -2 \tilde{\omega}  e^{\tilde{\omega}t}  \|u_{xx} (t)\|^2.
\label{in12}
\end{equation}
Combining (\ref{in11}) and (\ref{in12}), we get
\begin{equation}
\dfrac{d}{dt} \left(  e^{\tilde{\omega}t} \|u(t)\|^2 \right) + 2 \tilde{\omega}  e^{\tilde{\omega}t}  \|u_{xx} (t)\|^2 \leq
\tilde{\omega} e^{-\tilde{\omega}t}   \|v (0)\|^2 .
\label{in13}
\end{equation}
A simple integration of (\ref{in13}) over $[0,T_0]$ and the utilization of (\ref{in11}) gives the following estimate:
\begin{equation}
\int_{0}^{T_0}  e^{\tilde{\omega}t}  \|u_{xx} (t)\|^2~dt \leq
\left(\tilde{\omega}\right)^{-1}  \|v (0)\|^2 ,
\label{in14}
\end{equation}
which also gives by means of (\ref{wi2})
\begin{equation}
\dfrac{\pi^2}{\ell^2} \int_{0}^{T_0}  e^{\tilde{\omega}t}  \|u_{x} (t)\|^2~dt \leq
\left(\tilde{\omega}\right)^{-1}  \|v (0)\|^2 .
\label{in15}
\end{equation}
The ultimate outcome is to estimate $\int_{0}^{
T_0}  e^{\tilde{\omega}t}  \|u_{x} (t-\tau)\|^2~dt$. To proceed, we have:
 \begin{eqnarray*}
\int_{0}^{T_0}  e^{\tilde{\omega}t}  \|u_{x} (t-\tau)\|^2~dt &=&
\int _{-\tau}^{0} e^{\tilde{\omega}(r+\tau)} \|u_{x} (r)\|^2~ dr + \int _{0}^{T_0} e^{\tilde{\omega}(r+\tau)} \|u_{x} (r)\|^2~ dr \\
&+& \int _{T_0}^{T_0-\tau} e^{\tilde{\omega}(r+\tau)} \|u_{x} (r)\|^2~ dr  \\
&\leq&
\int _{-\tau}^{0} e^{\tilde{\omega}(r+\tau)} \|u_{x} (r)\|^2 dr + \int _{0}^{T_0} e^{\tilde{\omega}(r+\tau)} \|u_{x} (r)\|^2 dr .
\end{eqnarray*}
In light of (\ref{in15}), the last estimate gives the desired result:
\begin{equation}
\int_{0}^{T_0}  e^{\tilde{\omega}t}  \|u_{x} (t-\tau)\|^2~dt \leq e^{\tilde{\omega}\tau} \| v_{x}\|_{\tau}^2
+ \ell^2 \left(\pi^2 \tilde{\omega}\right)^{-1} e^{\tilde{\omega}\tau} \|v (0)\|^2.
\label{in16}
\end{equation}
where $\| v_{x}\|_{\tau}^2= \int _{-\tau}^{0}  \|   v_{x} (r)\|^2~ dr$.

\noindent {\bf Step 5:} The main concern now is to show that
\begin{equation}\label{fin}
  \dfrac{d}{dt} \|u_{xx}(t)\|^2 \leq \gamma  \|u_x(t-\tau)\|^2  \|u_{xx} (t)\|^2,
\end{equation}
for some positive constant $\gamma$. To do so, we first rewrite (\ref{sam}) as follows:
\begin{eqnarray}
\dfrac{d}{dt} \|u_{xx}(t)\|^2 &=&2 \nu \int _{0}^{\ell} u_{xx}(t) u_{xxxx} (t) ~dx -2 \int _{0}^{\ell}  u(t-\tau)  u_x(t) u_{xxxx} (t) ~dx-2 \mu \|u_{xxxx} (t)\|^2 \nonumber\\
&& -2 \int _{0}^{\ell} a(x) u(t) u_{xxxx} (t) ~dx.
\label{17}
\end{eqnarray}
Next, it follows from Cauchy-Schwarz inequality that $|u(t-\tau)| \leq \sqrt{\ell} \|u_x(t-\tau)\|$. This, together with (\ref{wi2}), (\ref{you}) and (\ref{17}), yields
\[
\dfrac{d}{dt} \|u_{xx}(t)\|^2 \leq \left( \dfrac{\nu^2}{2 \epsilon_1}+\dfrac{\ell}{2 \epsilon_2}-2\mu \right) \|u_{xxxx} (t)\|^2 + 2 \dfrac{\epsilon_2 \pi^2}{\ell^2} \| u_x(t-\tau)\|^2  \|u_{xx} (t)\|^2
\]
\[
+2  \epsilon_1  \|u_{xx} (t)\|^2 -2 \int _{0}^{\ell} a(x) u(t) u_{xxxx} (t) ~dx,
\]
for any positive constants $\epsilon_1$ and $\epsilon_2$. In view of the properties of  $a(x)$ and simple integration by parts, the latter gives
\[
\dfrac{d}{dt} \|u_{xx}(t)\|^2 \leq \left( \dfrac{\nu^2}{2 \epsilon_1}+\dfrac{\ell}{2 \epsilon_2}-2\mu \right) \|u_{xxxx} (t)\|^2 + 2 \dfrac{\epsilon_2 \pi^2}{\ell^2} \| u_x(t-\tau)\|^2  \|u_{xx} (t)\|^2
\]
\begin{equation}
+2 (  \epsilon_1-\sqrt{a_0} ) \|u_{xx} (t)\|^2 + 4 \int _{0}^{\ell}  a''(x) u_x^2 (t) ~dx - \int _{0}^{\ell}  a^{(4)}(x) u^2 (t)~dx.
\label{18}
\end{equation} Lastly, one can choose $\epsilon_1 =p \sqrt{a_0}$, where $p$ is an arbitrary number in $(0,1]$  and then choose $\epsilon_2 =\mu - \frac{\nu^2}{4 p \sqrt{a_0}}$ so that (\ref{18}) leads to the desired inequality (\ref{fin}) with $\gamma=\frac{4 p \pi^2 \sqrt{a_0}}{\ell (4 p \mu \sqrt{a_0}-\nu^2)}$ provided that $\nu^2 < 4 p \mu \sqrt{a_0}$.

\noindent {\bf Step 6:}
Now, recalling (\ref{in14}), (\ref{in16}), (\ref{fin}) and using Lemma \ref{lem} with $y(t)=\|u_{xx}(t)\|^2 $, $g(t)=\gamma \| u_x(t-\tau)\|^2$, $h(t)=0$, $m=\tilde{\omega}$, $c_1=\gamma e^{\tilde{\omega}\tau}
\| v_{x}\|_{\tau}^2 + \gamma  \ell^2 \left(\pi^2 \tilde{\omega}\right)^{-1} e^{\tilde{\omega}\tau} \|v (0)\|^2 $, $c_2=0$ and $c_3=\left(\tilde{\omega}\right)^{-1}  \|v (0)\|^2$, we reach that for any $t \in [0,T]$:
\[
\|u_{xx}(t)\|^2 \leq \left( \|v(0)\|^2 + \|v_{xx} (0)\|^2 \right)
\exp \left[ \gamma e^{\tilde{\omega}\tau} \| v_{x}\|_{\tau}^2
+ \gamma  \ell^2 \left(\pi^2 \tilde{\omega}\right)^{-1} e^{\tilde{\omega}\tau} \|v (0)\|^2  \right] e^{-\tilde{\omega}t}
\]
which gives
\begin{equation}\label{kk}
\|u_{xx}(t)\|^2 \leq (M^2/4) e^{-\tilde{\omega}t}, \quad \forall t \in [0,T],
\end{equation}
 as long as
\[ M=\displaystyle \sup_{-\tau \leq s \leq 0} \|v_{x}(s)\|+4
\left[ \left[ \|v (0)\|^2 + \|v_{xx} (0)\|^2 \right] \exp \left( \gamma
\| v_{x}\|_{\tau}^2 + \frac{\gamma  \ell^2}{\pi^2}  \|v (0)\|^2  \right) \right]^{1/2},\]
and $\tau < \hat{\tau}=\min \{\sigma, \tau_2 \}$, where $\tau_2$ is given by (\ref{tau}) and
\[
\sigma= \sup \left\{ \kappa; \, \left[ \|v (0)\|^2 + \|v_{xx} (0)\|^2 \right] \exp \left[ \gamma e^{\tilde{\omega}\tau} \left( \| v_{x}\|_{\tau}^2
+  \ell^2 \left(\pi^2 \tilde{\omega}\right)^{-1} \|v (0)\|^2 \right) \right] \leq \frac{M^2}{4}, \; \forall \tau \in [0,\kappa]  \right\}.
\]
Taking $t=T$ in (\ref{kk}) and recalling that $\|u_{xx}(T)\|=M$, we finally reach the contradiction. Thereby, $T=\infty$ and also the conclusion of the theorem follows from (\ref{kk}).
\end{proof}

\begin{remark}
\label{rem2}
(i) There are many functions $a(x)$ satisfying the conditions of Theorem \ref{theo2}. For instance, one can take $a(x)=c_0$ ($c_0$ being any positive constant), which obviously satisfy the assumptions of Theorem \ref{theo2}. Furthermore, given a positive real number $b_0$, one can also choose  $a(x)=b_0+x$ or $a(x)=b_0+\sin(\pi x/{\ell})$, for $x \in [0,\ell]$. Then, it is easy to check that the assumptions of Theorem \ref{theo2} are fulfilled for such functions.

(ii) A careful look at the decay rate $\tilde{\omega}$ obtained in Theorem \ref{theo2} leads us to notice that the role of $a(x)$ is to accelerate the convergence of the zero solution. Indeed, the role of the damping term will be illustrated later in the numerical simulations section.
\end{remark}

\section{Numerical results}
 \label{num}
\setcounter{equation}{0}
 The aim of this section is to illustrate via numerical simulations the stability results of the time-delayed dispersive equation (1.1) {\bf{with}} and {\bf{without}} a presence of a time-delay. The main numerical simulation tool used in this section is  COMSOL Multiphysics software 5.4 which is based on the finite element method (FEM). Due to the sensitivity of the equation, an extra fine element mesh size is used, and the backward differentiation formula (BDF) as a numerical integrator with dt=0.001 is selected. The numerical solutions are computed for $\ell=1$ and for different values of $\nu$ and $\mu$, and for different functions $a(x)$.

 \subsection{The dispersive equation without a time-delay}
 In this subsection, we consider the dispersive equation without a time-delay, i.e., when $\tau=0$. In this case, the system (\ref{1}) reduces to the following:
 \begin{equation}
\label{41}
\left\{
\begin{array}{ll}
u_t(x,t) - \nu \, u_{xx}(x,t) + \mu \, u_{xxxx}(t,x)+  u(x,t)  u_x(x,t) +a(x) u (x,t)= 0,& (x,t) \in Q, \\
u(0,t) = u(1,t) = u_x (0,t) = u_x (1,t) =0,&  t > 0, \\
u(x,0) = u_0(x),&   x \in (0,1),
\end{array}
\right.
\end{equation}
 where $Q=(0,1) \times (0,+\infty)$, $\nu$ and $\mu$ are positive physical parameters, and $a(x) \in L^{\infty}(0,1)$. We study two cases: i) $a(x)$ is a non-positive function; ii) $a(x)$ is a positive function.

 \,\,\,{\bf{Case 1:}} {\bf{$a(x)$ is a non-positive function:}} let us take $\nu=0.01$ and $\mu=0.001$ and $u_0(x)=\sin( \pi x)$, and consider the following four different functions for $a(x)$, namely,  $a(x)=0$; $a(x)=-1$; $a(x)=-2$; and $a(x)=-3$. Figure 1 presents a 3-dimensional plot of the dynamics of the dispersive equation (\ref{41}) for these four  functions of $a(x)$. Figure (1a) indicates that the dynamics of the dispersive equation (\ref{41})  is exponential stable for the case  $a(x)=0$. This is verified by plotting the $L^2$-norms of the solutions $u(x,t)$ and $u_{xx}(x,t)$, $||u(x,t)||$ and $||u_{xx}(x,t)||$, respectively, versus time (see Figures (2a) and (2b)). The figures show that these norms converge exponentially to zero as $t \to \infty$.  However, when $a(x)$ is negative, the zero solution is unstable, and the dynamics of $u(x,t)$ converges to a nonzero steady-state solution (see Figures (1b)-(1d) and (2a)-(2b)). A careful look at the figures indicates that as the value of $a(x)$ decreases, the value of the nonzero steady state increases.

  \,\,\,{\bf{Case 2:}} {\bf{$a(x)$ is a positive function:}} In this case, we choose $\nu=0.01$ and $\mu=0.001$ and $u_0(x)=\sin( \pi x)$. Figure 3 presents a 3-dimensional plot of the dynamics of the dispersive equation (\ref{41}) for four different functions: $a(x)=1$; $a(x)=1+x$; $a(x)=1+\sin(\pi x)$; $a(x)=1+2x+\sin(2 \pi x)$. The figures indicate that for each of the chosen function $a(x)$, the dynamics of $u(x,t)$ is exponentially stable. This is verified by plotting the $L^2$-norms of the solutions: $||u(x,t)||$ and $||u_{xx}(x,t)||$, respectively, versus time (see Figures (4a) and (4c)). The figures show that these norms converge exponentially to zero as $t \to \infty$. Furthermore, Figures (4b) and (4d) depict semi-log plots of the $L^2$-norms, $||u(x,t)||$ and $||u_{xx}(x,t)||$ versus time. A careful look at the figures indicates that the curves of  these norms are indeed straight lines with different negative slopes. In addition, among the four selected functions of $a(x)$, the dynamics corresponding to the case when $a(x) = 1 + 2x + \sin(2 \pi x)$ has the fastest convergence rate; whereas, the dynamics corresponding to the case when $a(x) = 1$ has the slowest convergence rate. This of course is due to the fact that the function $a(x)=1+2x+\sin(2 \pi x)$ is the largest; whereas, $a(x)=1$ is the smallest among the other three function for $x \in (0,1)$.

\newpage
\vspace*{1.5in}
\begin{figure}[!h]
\centering
\subfigure[]{
\includegraphics[scale=0.4]{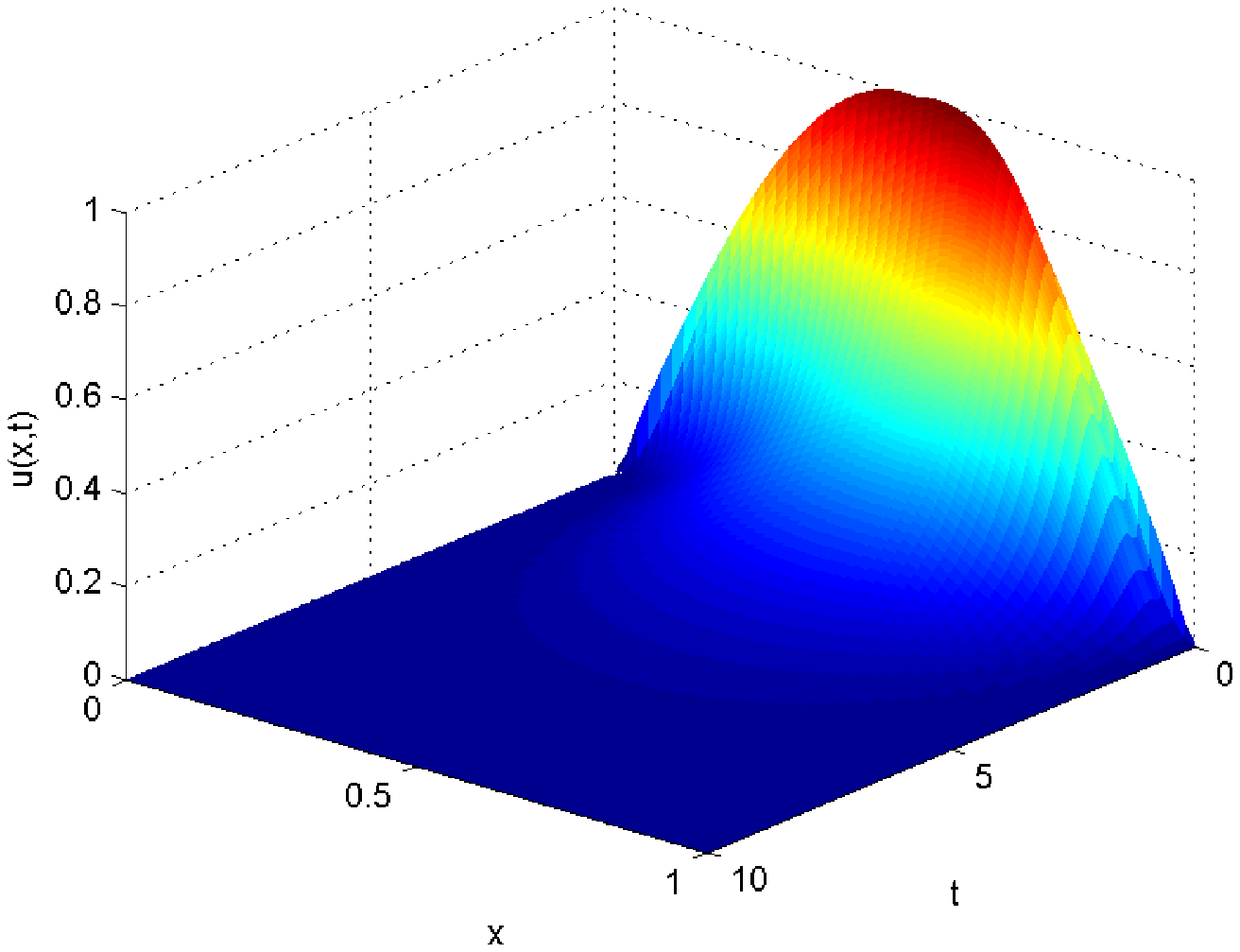}
\label{fig:subfiga}
}
\subfigure[]{
\includegraphics[scale=0.4]{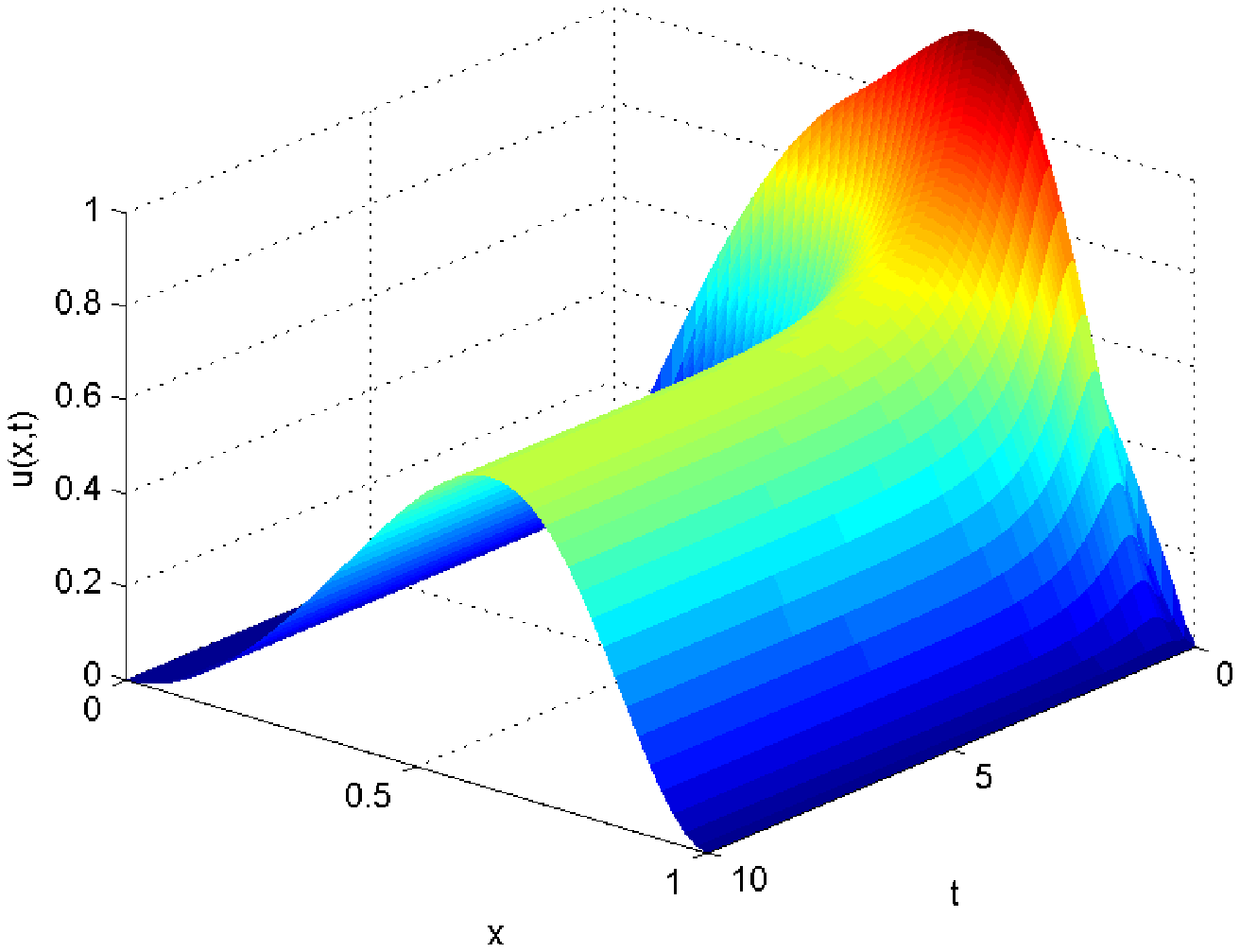}
\label{fig:subfigb}
}
\subfigure[]{
\includegraphics[scale=0.4]{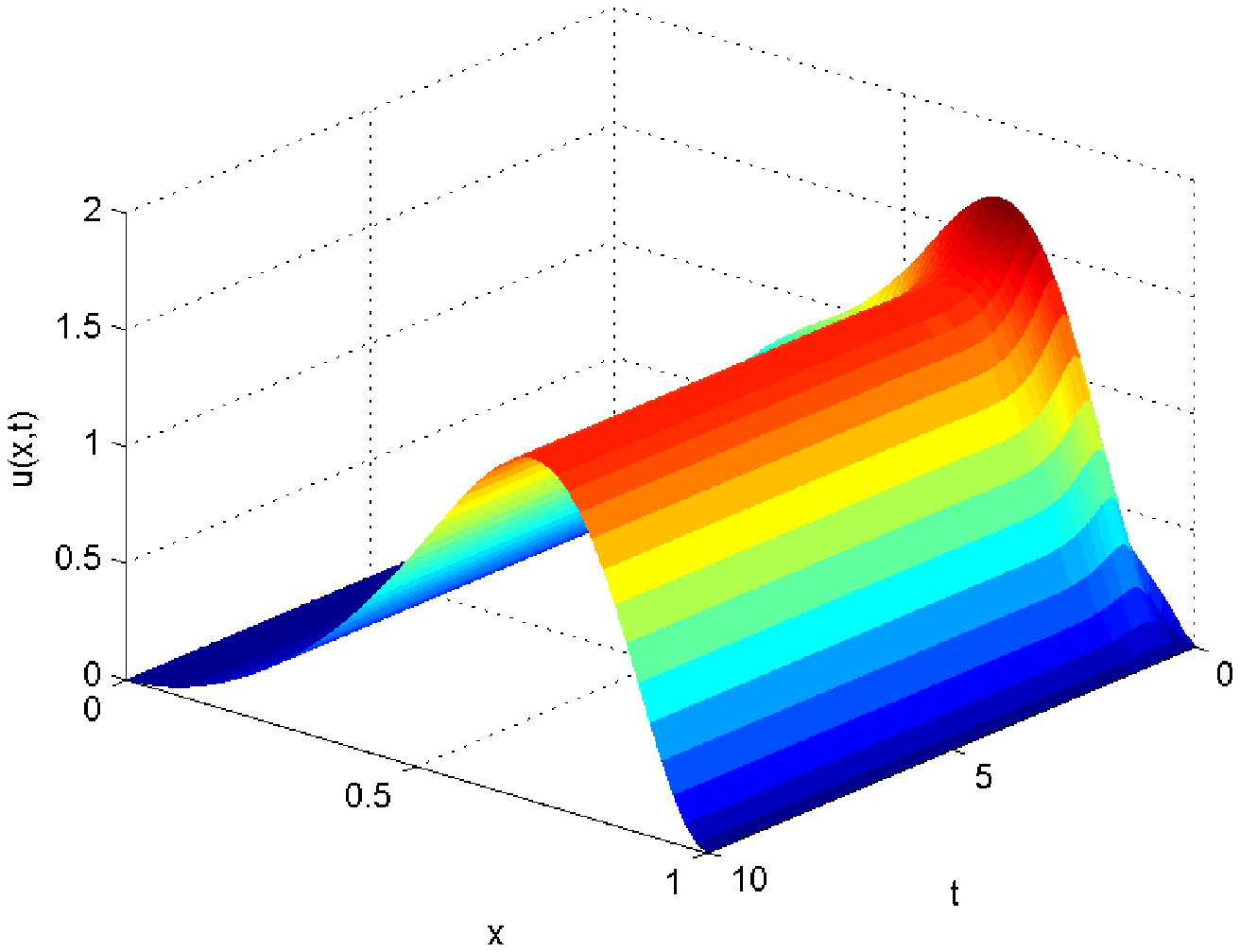}
\label{fig:subfigf}
}
\subfigure[]{
\includegraphics[scale=0.4]{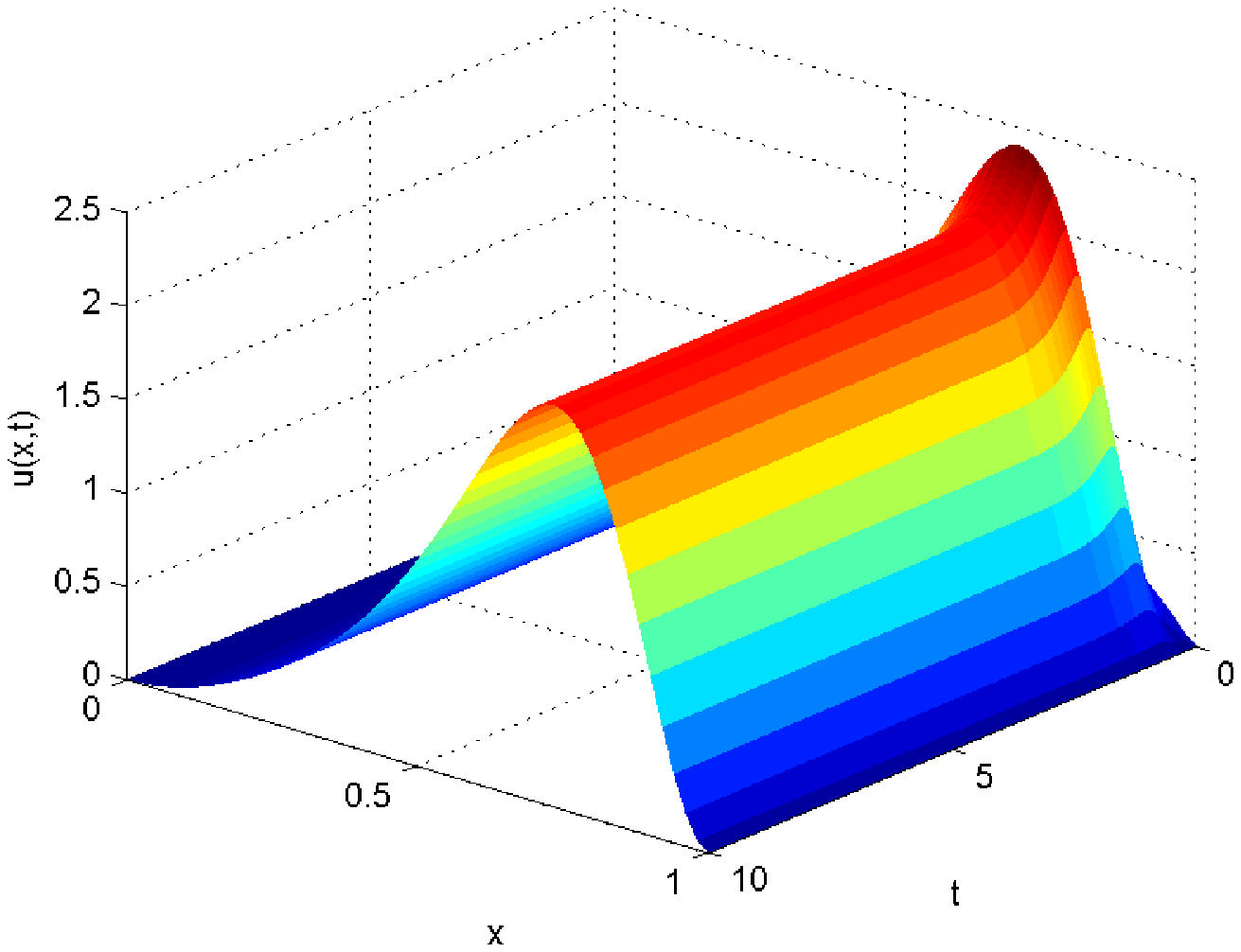}
\label{fig:subfige}
}
\caption{A 3-d landscape of the dynamics of the dispersive equation (\ref{41}) without delay (i.e., $\tau=0$) when $\nu=0.01$, $\mu=0.001$ and $u(x,0)=\sin (\pi x)$ for different functions $a(x)$; (a) $a(x)=0$; (b) $a(x)=-1$; (c) $a(x)=-2$; (d) $a(x)=-3$.}
\label{fig:Chapter4-03}
\end{figure}

\newpage
\vspace*{1.5in}
\begin{figure}[!h]
\centering
\subfigure[]{
\includegraphics[scale=0.4]{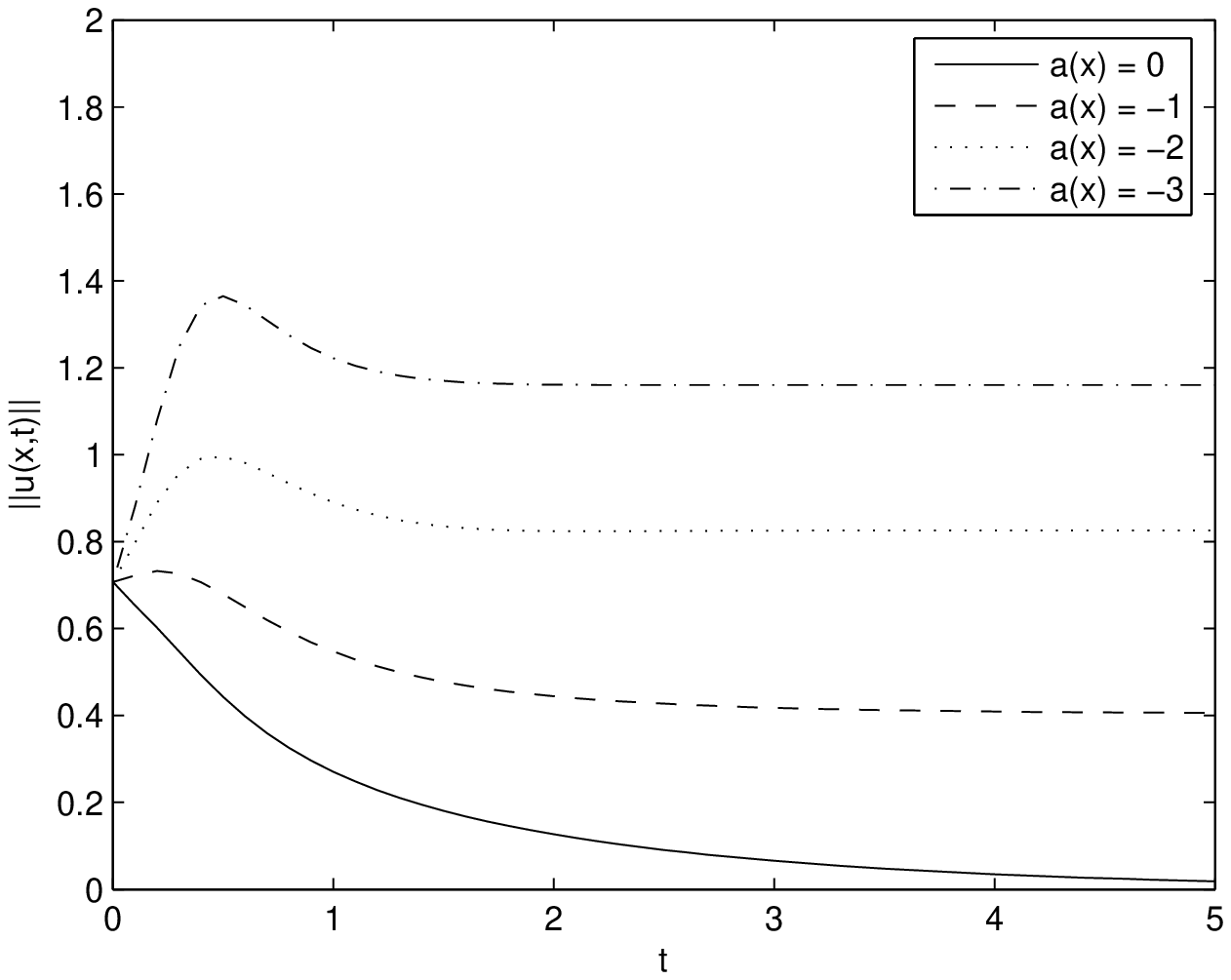}
\label{fig:subfiga}
}
\subfigure[]{
\includegraphics[scale=0.4]{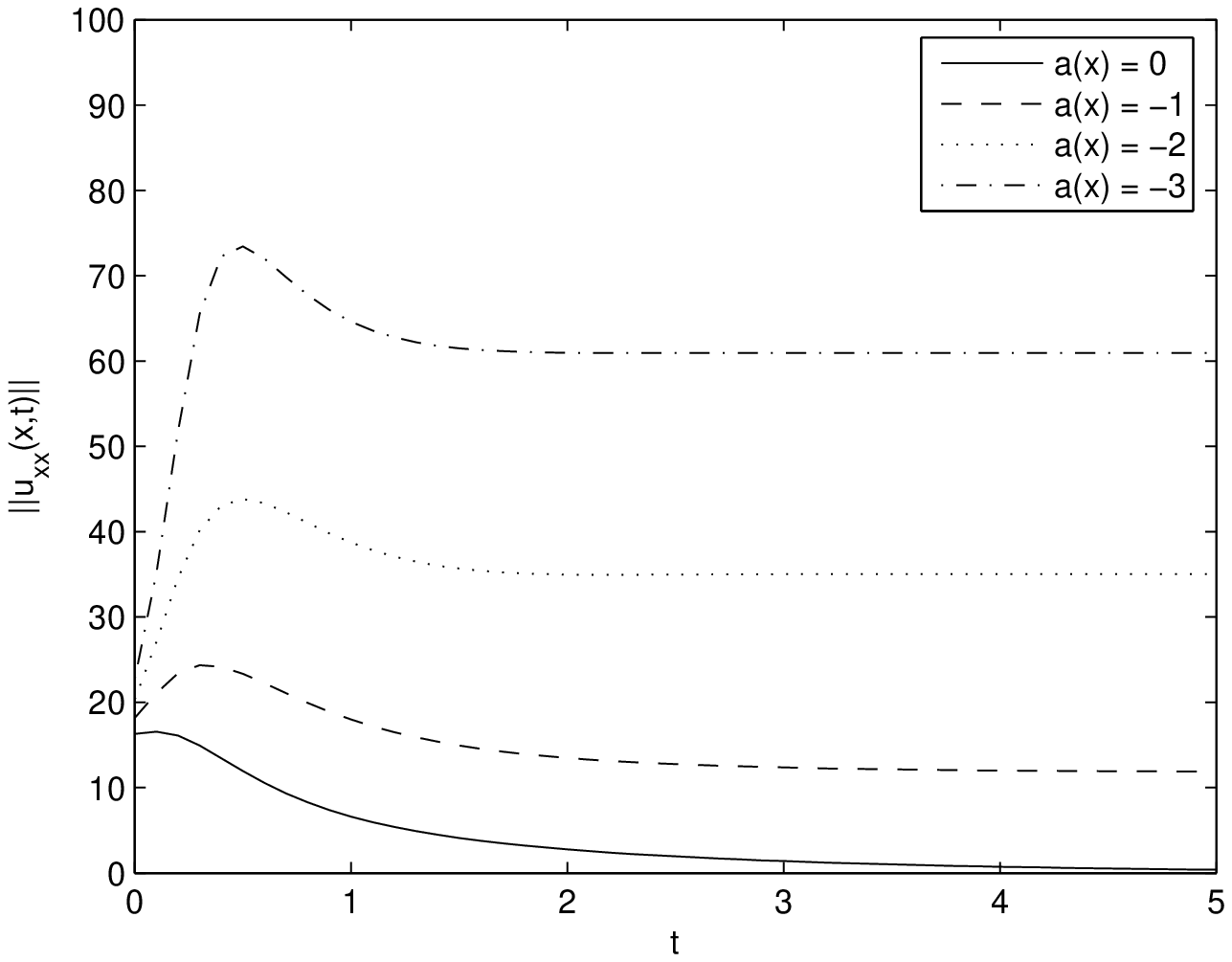}
\label{fig:subfigb}
}
\caption{The $L^2$-norms $||u(x,t)||$ and $||u_{xx}(x,t)||$  without time-delay (i.e., $\tau=0$); (a)  $||u(x,t)||$ vs. time for different  function $a(x)$; (b)   $||u_{xx}(x,t)||$ vs. time for different  function $a(x)$.}
\label{fig:Chapter4-03}
\end{figure}

\newpage
\vspace*{1.5in}
\begin{figure}[!h]
\centering
\subfigure[]{
\includegraphics[scale=0.4]{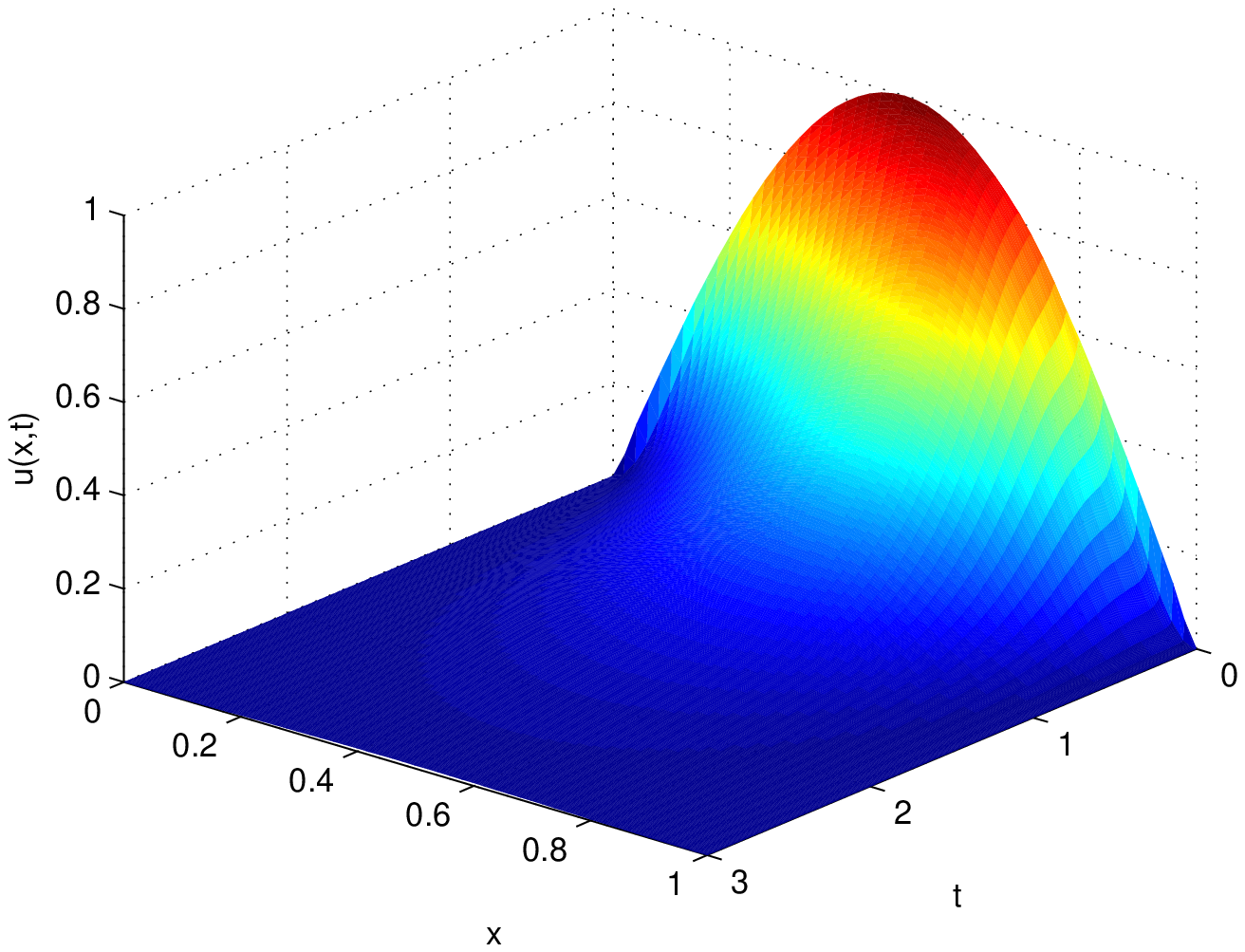}
\label{fig:subfiga}
}
\subfigure[]{
\includegraphics[scale=0.4]{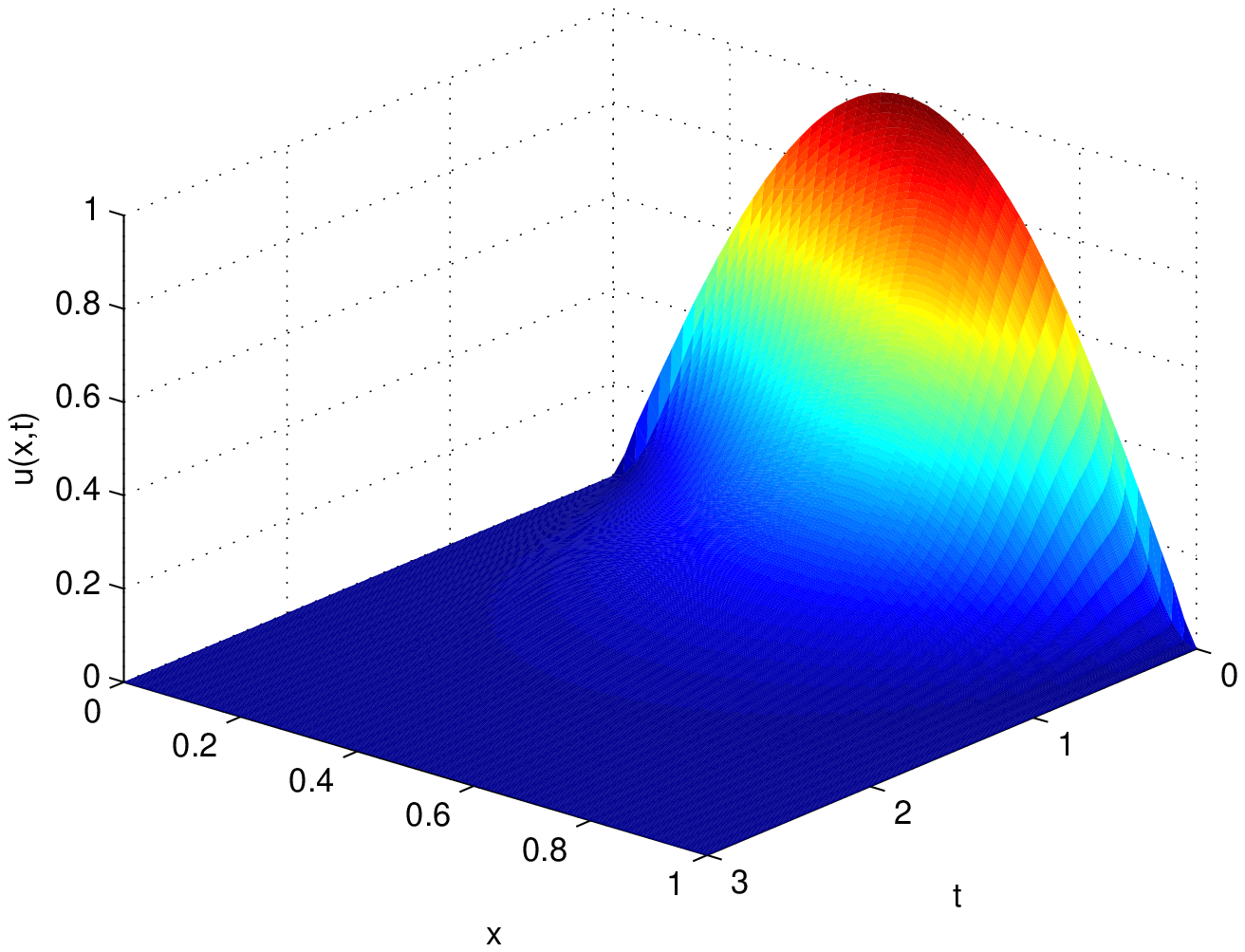}
\label{fig:subfigb}
}
\subfigure[]{
\includegraphics[scale=0.4]{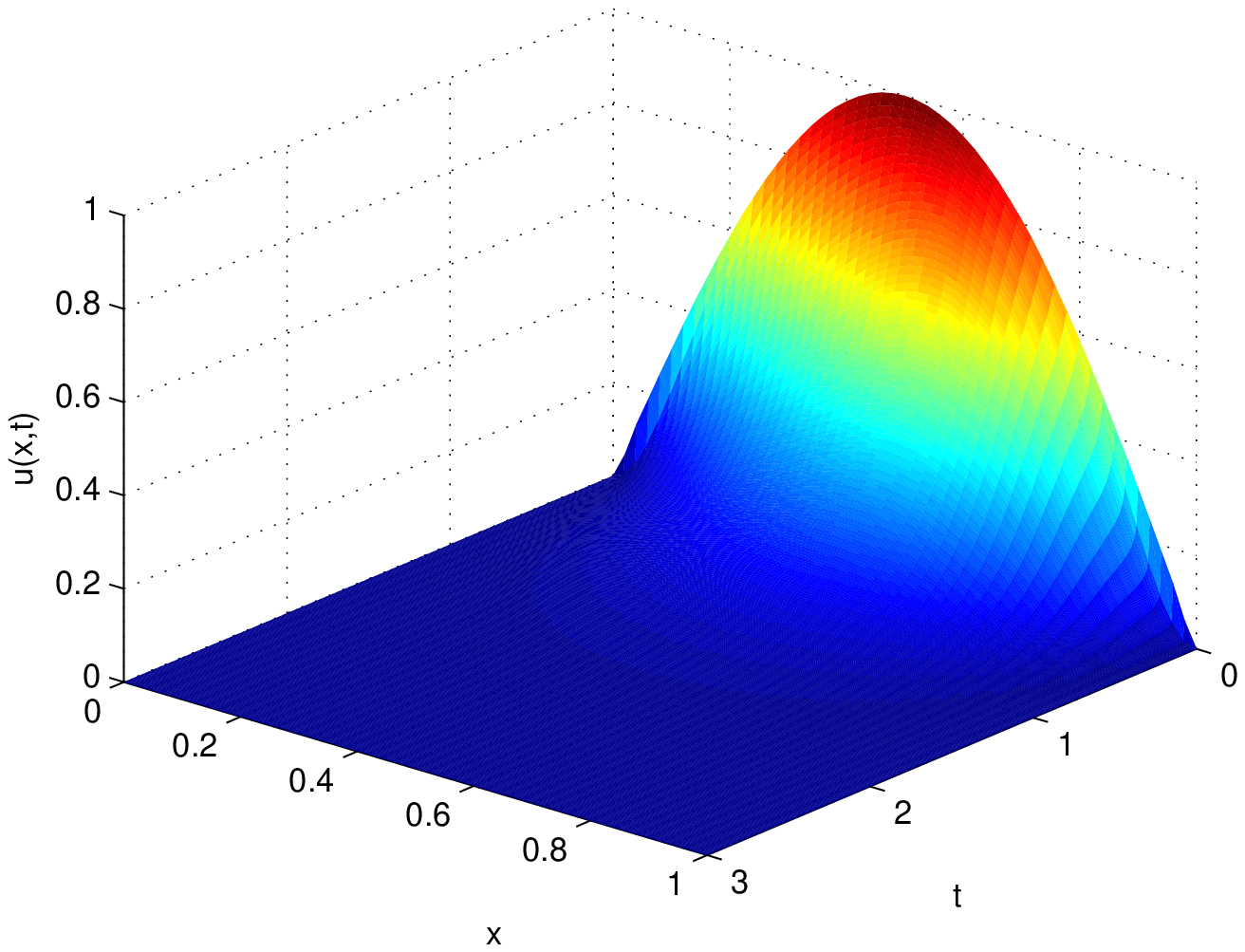}
\label{fig:subfigf}
}
\subfigure[]{
\includegraphics[scale=0.4]{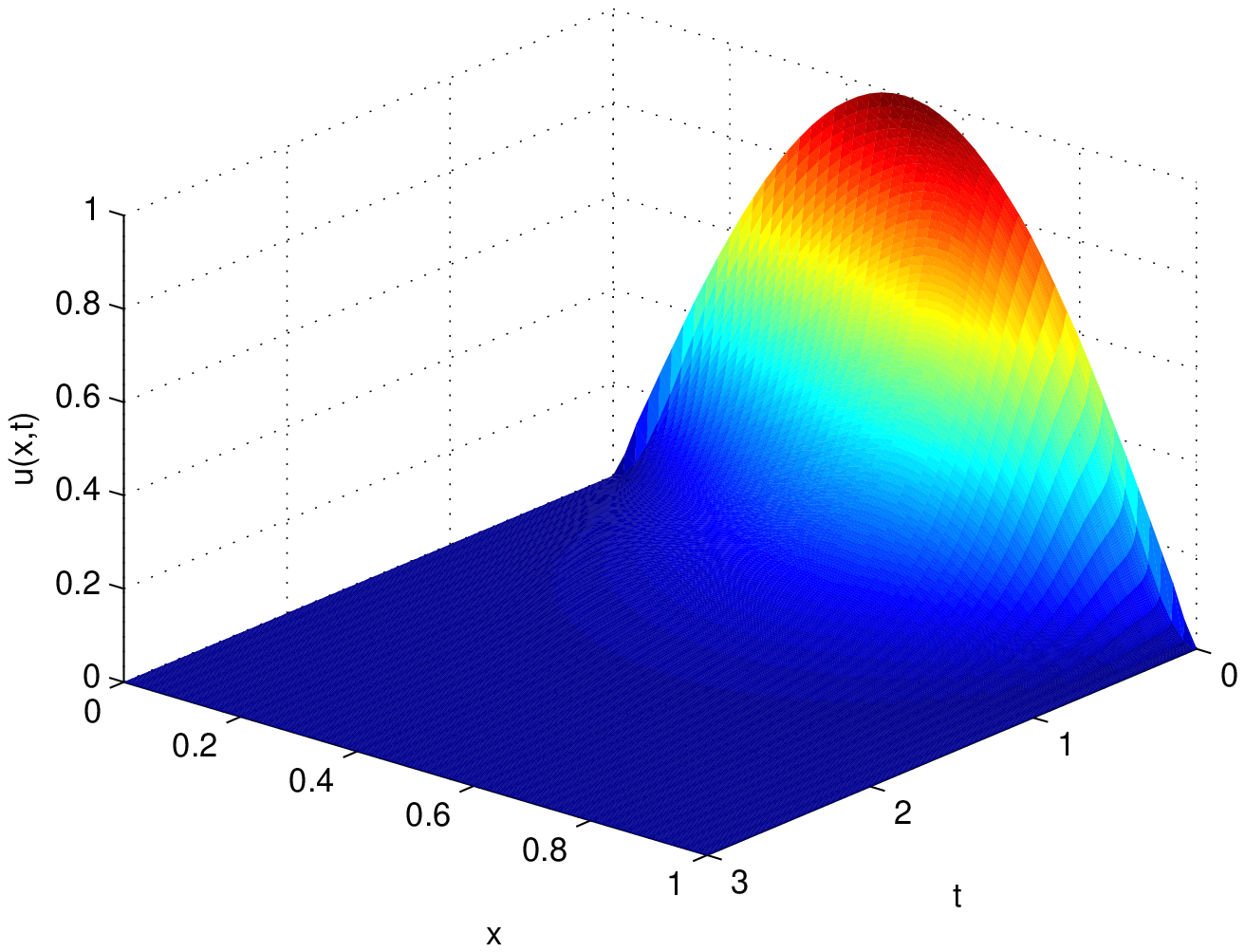}
\label{fig:subfige}
}
\caption{A 3-d landscape of the dynamics of the dispersive equation (\ref{41}) without delay (i.e., $\tau=0$) when $\nu=0.01$, $\mu=0.001$ and $u(x,0)=\sin (\pi x)$ for different functions $a(x)$; (a) $a(x)=1$; (b) $a(x)=1+x$; (c) $a(x)=1+\sin(\pi x)$; (d) $a(x)=1+2x+\sin(2\pi x)$.}
\label{fig:Chapter4-03}
\end{figure}

\newpage
\vspace*{1.5in}
\begin{figure}[!h]
\centering
\subfigure[]{
\includegraphics[scale=0.4]{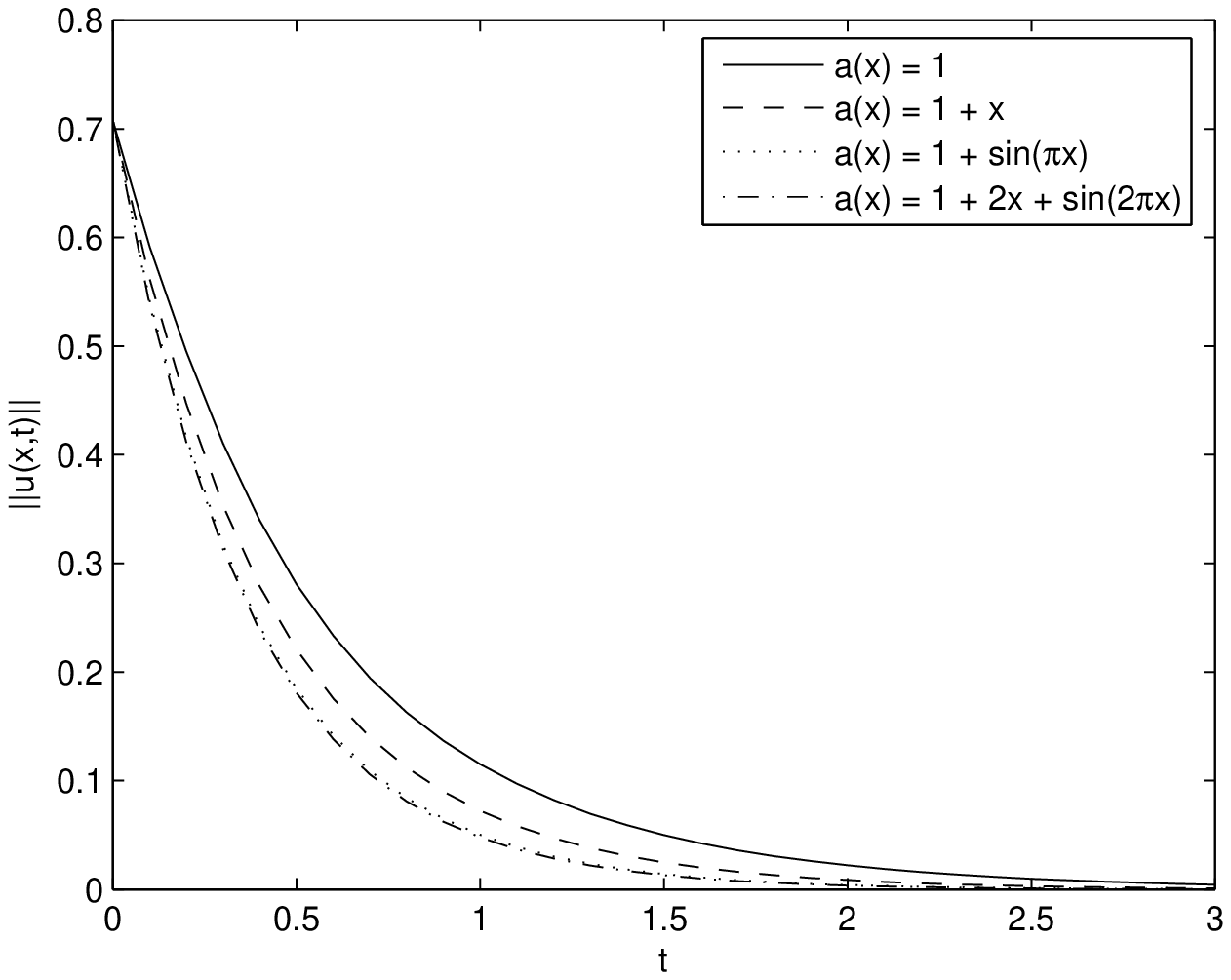}
\label{fig:subfiga}
}
\subfigure[]{
\includegraphics[scale=0.4]{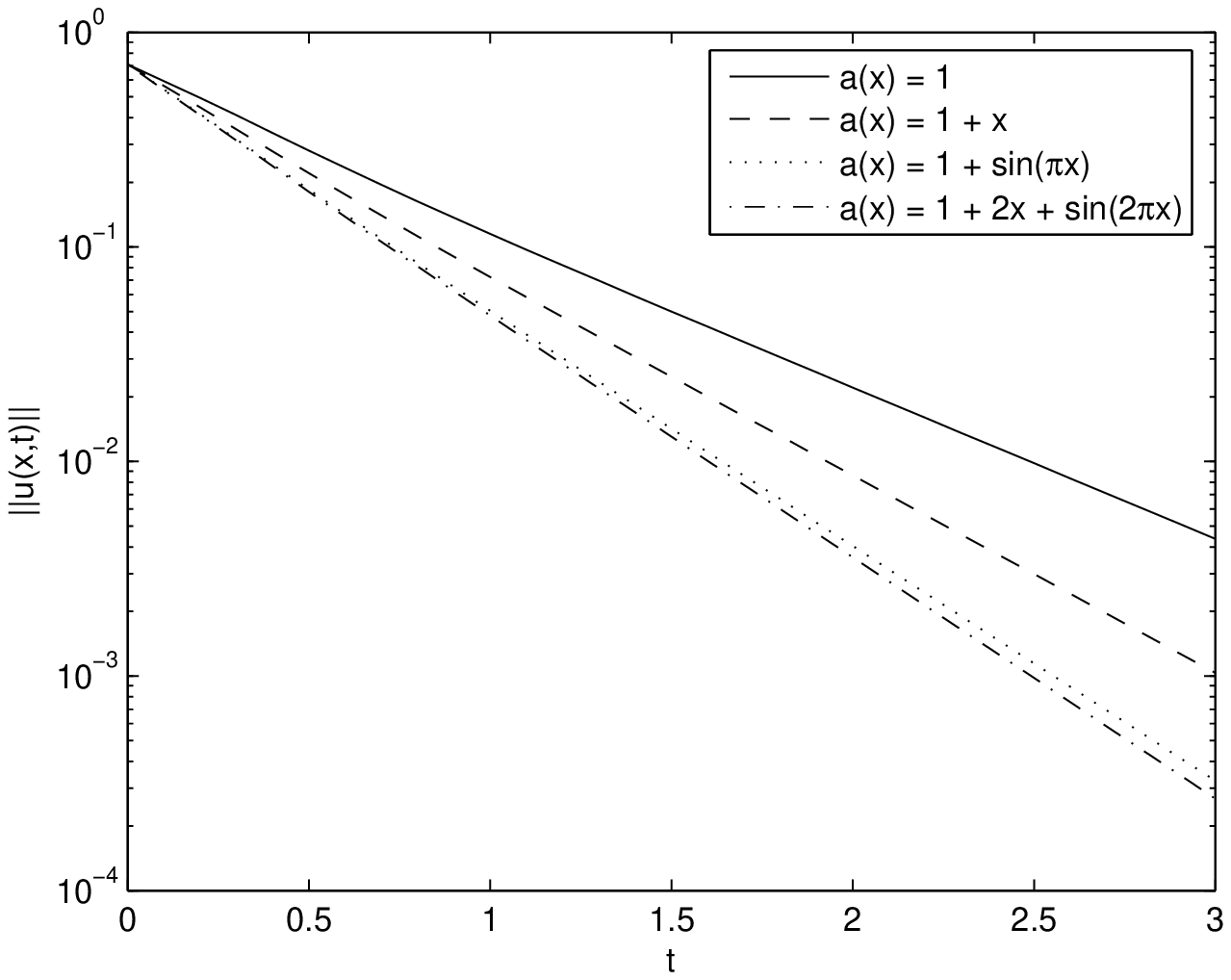}
\label{fig:subfigb}
}
\subfigure[]{
\includegraphics[scale=0.4]{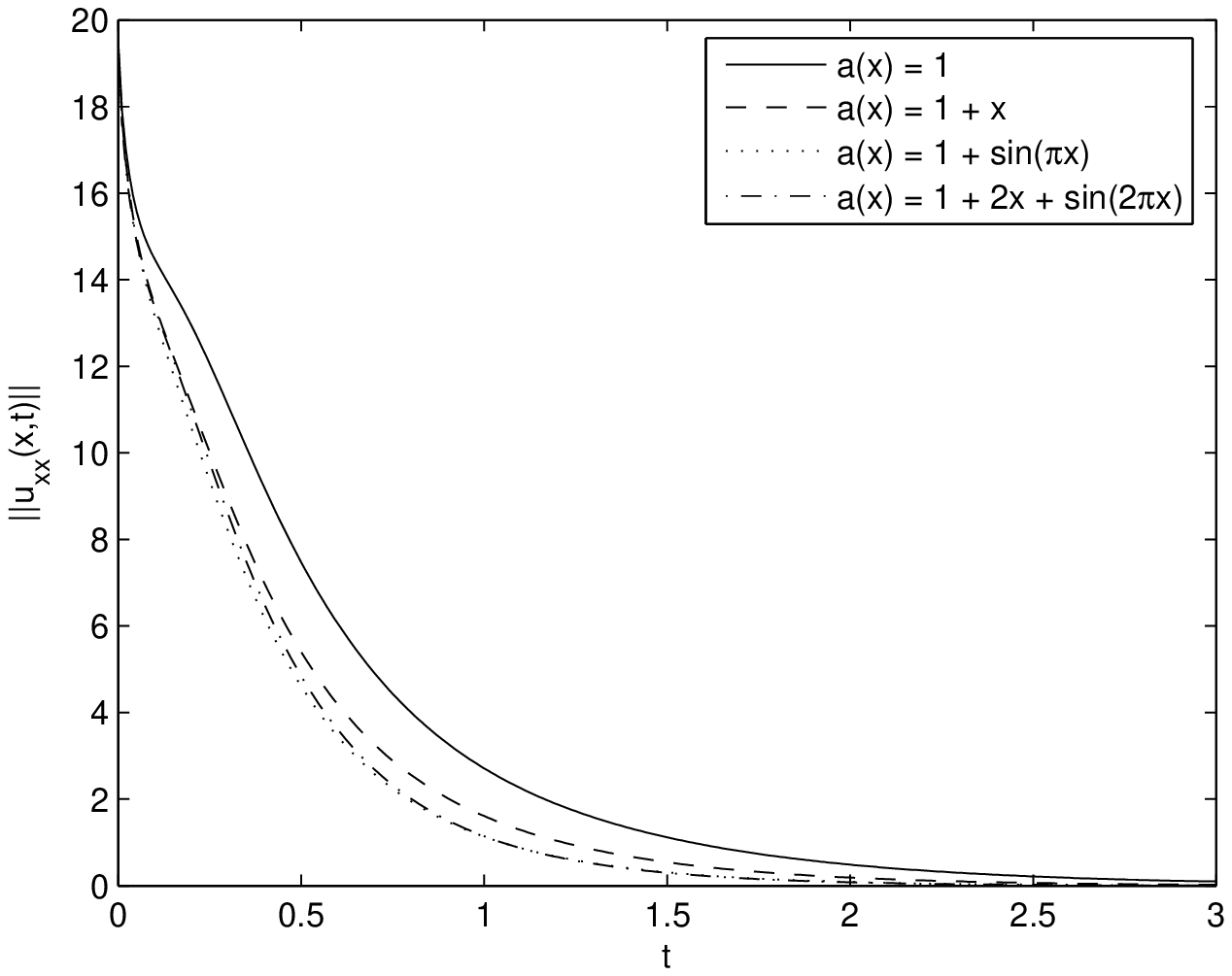}
\label{fig:subfigf}
}
\subfigure[]{
\includegraphics[scale=0.4]{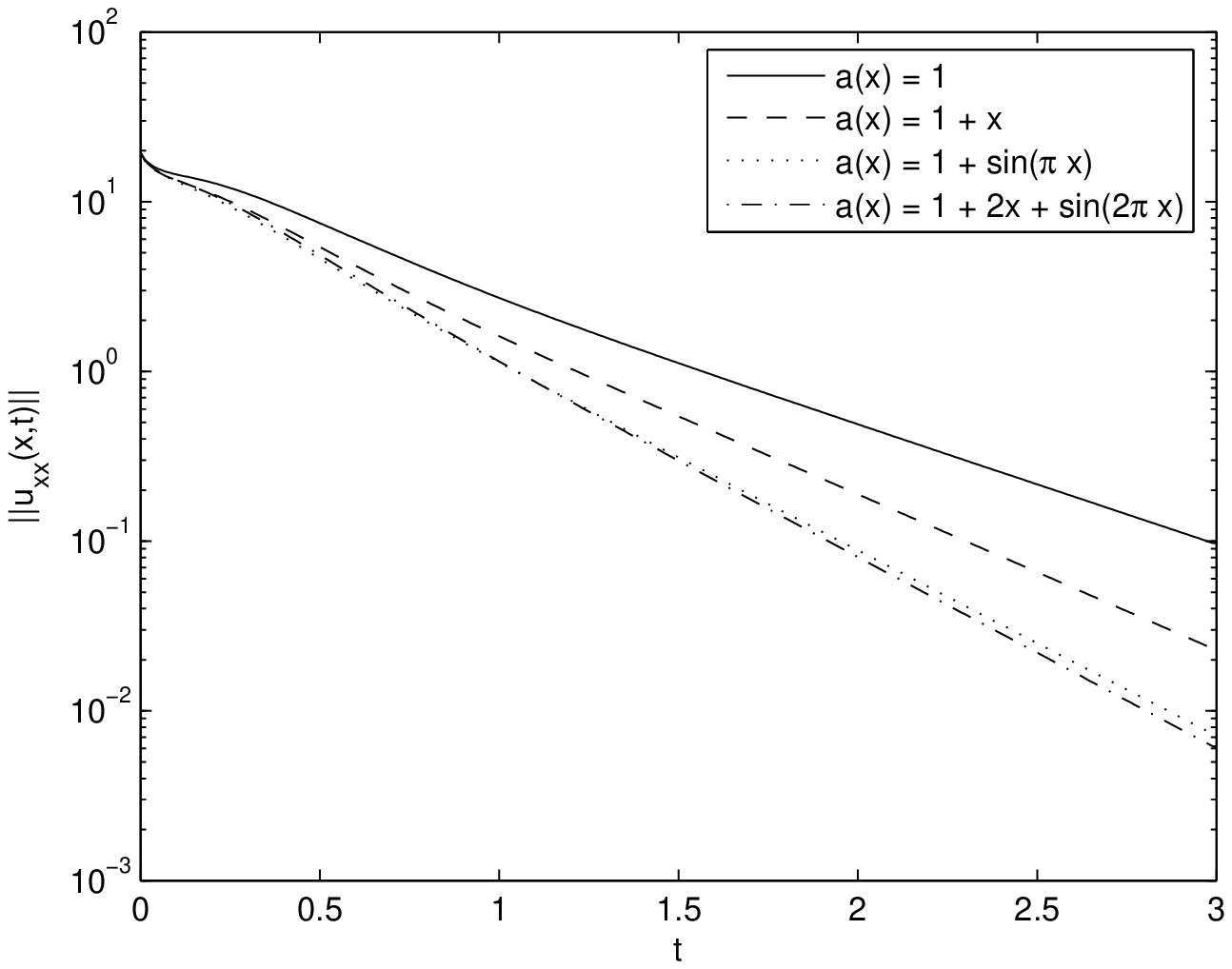}
\label{fig:subfige}
}
\caption{The $L^2$-norms $||u(x,t)||$ and $||u_{xx}(x,t)||$  without time-delay (i.e., $\tau=0$); (a)  $||u(x,t)||$ vs. time for different $a(x)$; (b) A semi-log plot of $||u(x,t)||$ vs. time for different $a(x)$; (c) $||u_{xx}(x,t)||$ vs. time for different $a(x)$; (d) A semi-log plot of  $||u_{xx}(x,t)||$ vs. time for different $a(x)$.}
\label{fig:Chapter4-03}
\end{figure}

\newpage

 \subsection{The dispersive equation with a time-delay}
 In this subsection, we revisit the dispersive equation (\ref{1}) with time-delay.

\,\,\,Throughout this section, we take the physical parameters $\nu=0.01$ and $\mu=0.001$,  the initial condition $v(x,s)=\sin (\pi x)$, and  we consider the following two cases:

{\bf{Case 1:}} {\bf{$a(x)$ is a non-positive function:}} We consider the same four non-positive functions treated in Section 4.1, and simulate the system (\ref{1}) when the time-delay $\tau=1$. Figure 5 presents the time evolution of the solution $u(x,t)$ for these four functions. Figure (5a) depicts that the dynamics is still stable when $a(x)=0$. In turn, the dynamics for each of the other selected negative functions  is unstable.  Furthermore, in each case, the $L^2$-norm $||u(x,t)||$  versus time is plotted (see Figure 6). In this case, it is shown that the time-delay $\tau=1$ destabilizes a stable dynamics when $a(x)$ is negative. On the other hand, when $a(x)=0$, the dynamics of the dispersive equation with a time-delay $\tau=1$ is exponentially stable.

\,\,\,{\bf{Case 2:}} {\bf{$a(x)$ is a positive function:}} We consider the following four positive functions of $a(x)$: i) $a(x)=1$; ii) $a(x) = 1 + x$; iii) $a(x) = 1 + \sin(\pi x)$; and iv) $a(x) = 1 + 2x + \sin(2 \pi x)$, and simulate system (\ref{1}) when the time-delay $\tau=1$. Figure 7 presents the time evolution of the solution $u(x,t)$ for these four cases. The figure indicates that in each case the solution converges to the zero solution. Furthermore, in each case the $L^2$-norms $||u(x,t)||$ and $||u_{xx}(x,t)||$ versus time are plotted in Figures (8a) and (8c), respectively, where it is shown that the two norms converge exponentially to zero as $ t \to \infty $. The exponential convergence is validated  by plotting the semi-log plots of these two norms versus time (see Figures (8b) and (8d)). A careful look at these figures reveals that because of the effect of the time-delay,  the curves of these norms become straight line with negative slopes around $t=1.25$.  The exponential results  are in accordance with the analytical results presented in Section 3. In addition, among the four chosen functions of $a(x)$, the dynamics of the dispersive equation when the time-delay $\tau=1$ corresponding to $a(x) = 1 + 2x + \sin(2 \pi x)$ has the fastest convergence rate; whereas, the dynamics corresponding to $a(x) = 1$ has the slowest convergence rate. This is because $a(x) = 1 + 2x + \sin(2 \pi x)$ is the largest, while $a(x)=1$ is the smallest. This observation is similar to the one noted in Section 4.1 when the time-delay $\tau = 0$.

 \,\,\,Next, we shall study the effect of the choice  of the time-delay $\tau$ on the stability of the system (\ref{1}). To do so, we vary the time-delay $\tau$ and simulate the dynamics of the system. Figures (9a)-(12a) and (9c)-(12c) show that each of the $L^2$-norms of $||u(x,t)||$ and $||u_{xx}(x,t)||$  converges exponentially to zero  for each of the four cases. The rate of convergence of these norms increases slowly as the value of $\tau$ increases. Again, the exponential decay can be confirmed by plotting semi-log plots of these norms versus time  revealing that the curves of these norms are  straight lines with negative slopes (see Figures (9b)-(12b) and (9d)-(12d)).

\newpage
\vspace*{1.5in}
\begin{figure}[!h]
\centering
\subfigure[]{
\includegraphics[scale=0.4]{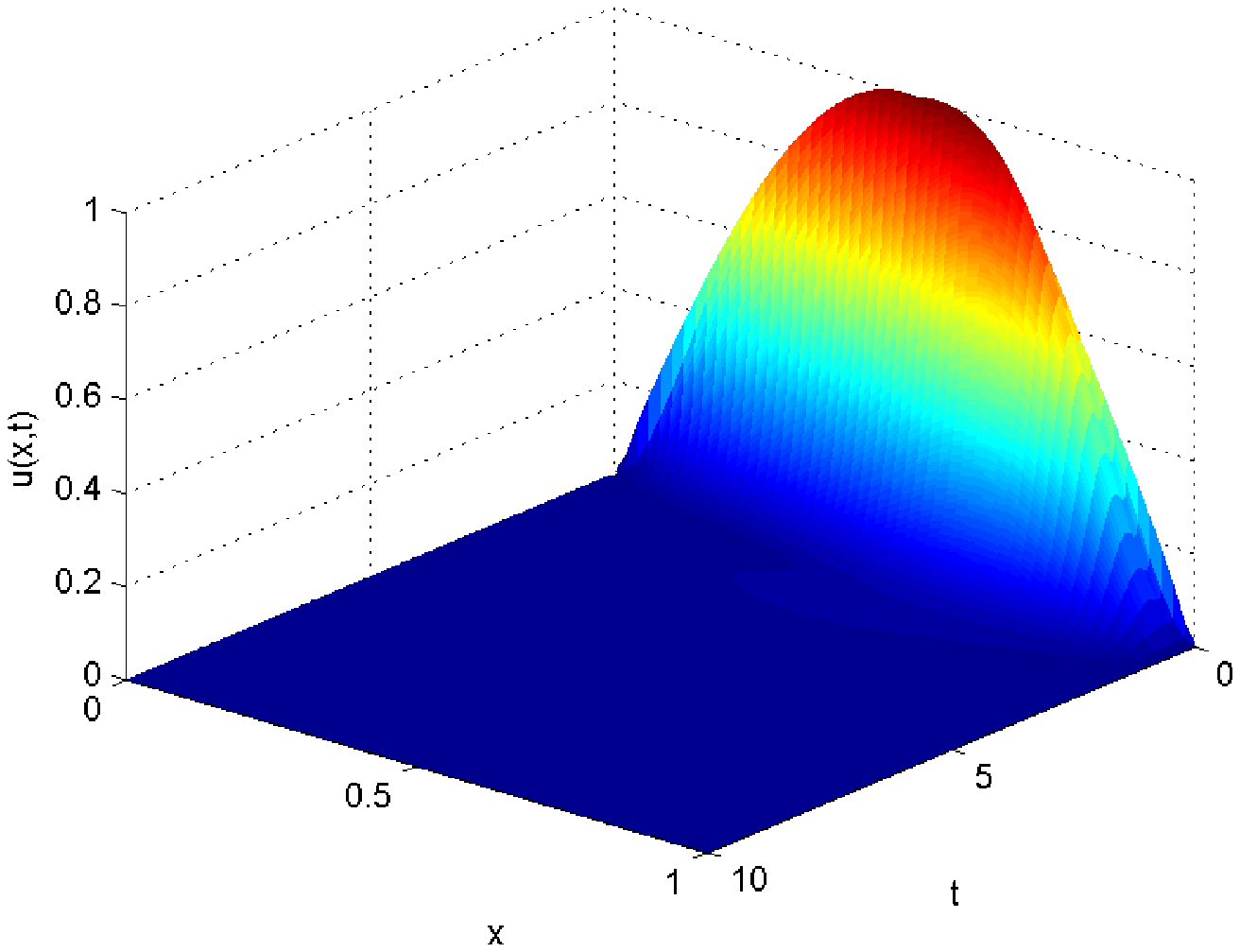}
\label{fig:subfiga}
}
\subfigure[]{
\includegraphics[scale=0.4]{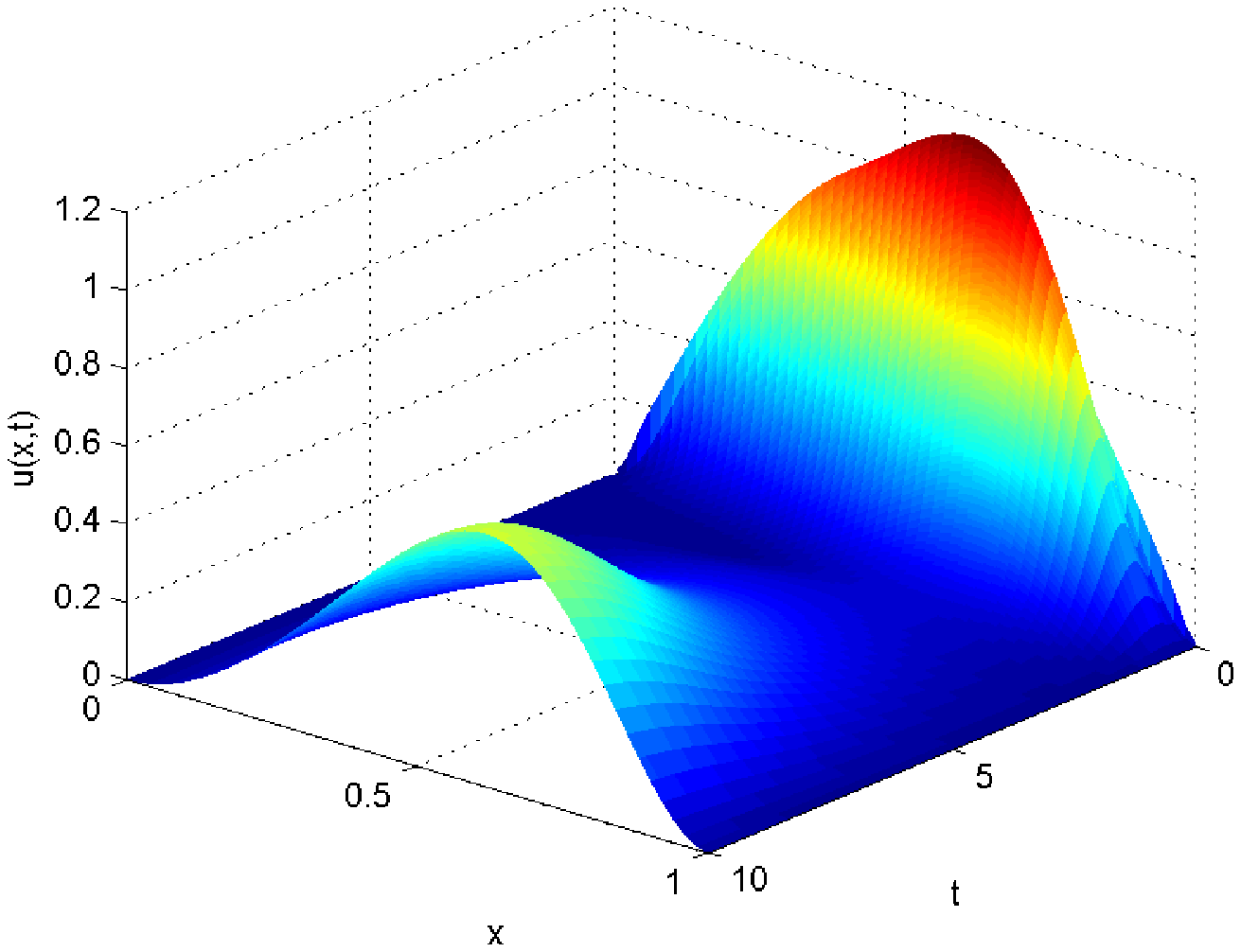}
\label{fig:subfigb}
}
\subfigure[]{
\includegraphics[scale=0.4]{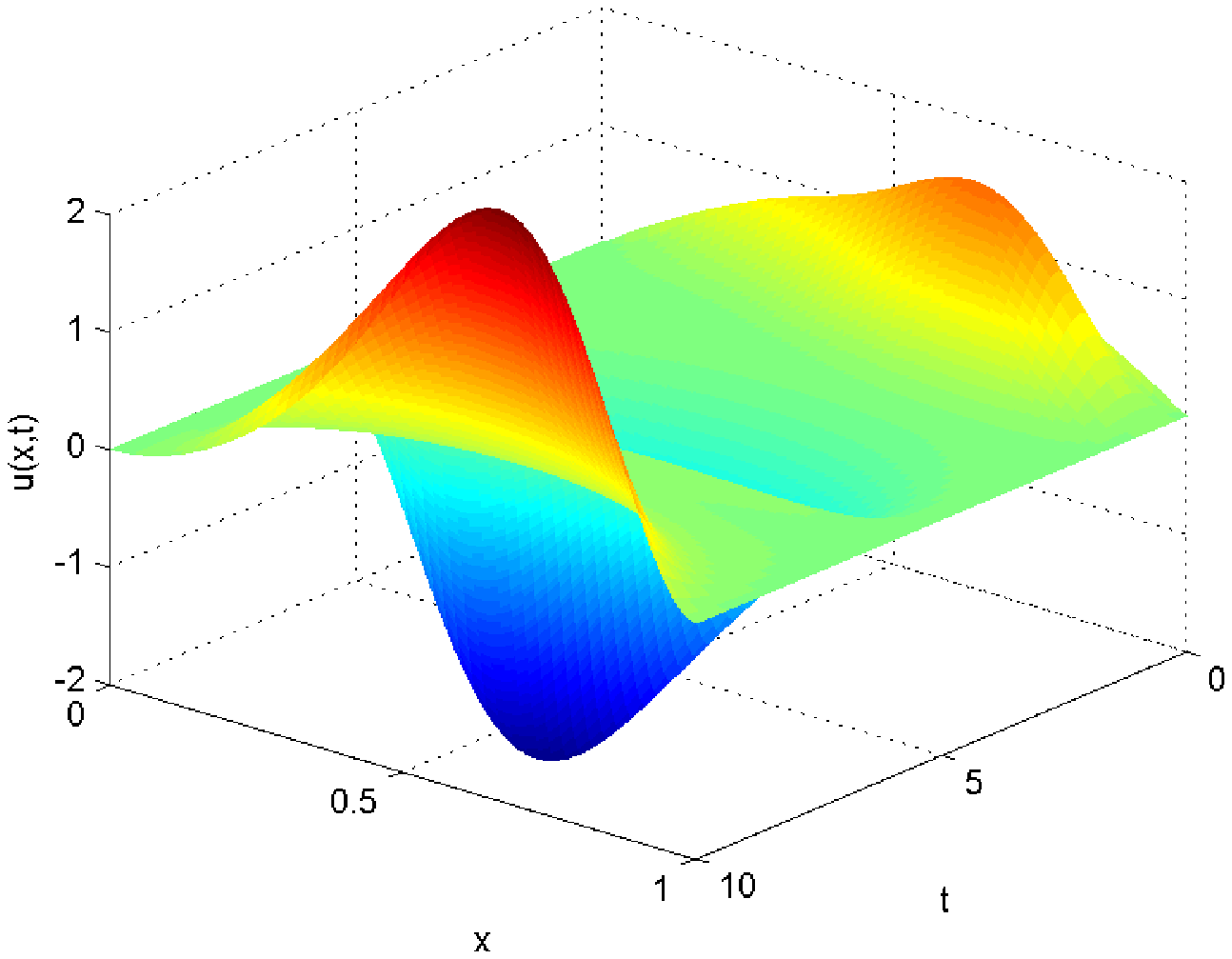}
\label{fig:subfigf}
}
\subfigure[]{
\includegraphics[scale=0.4]{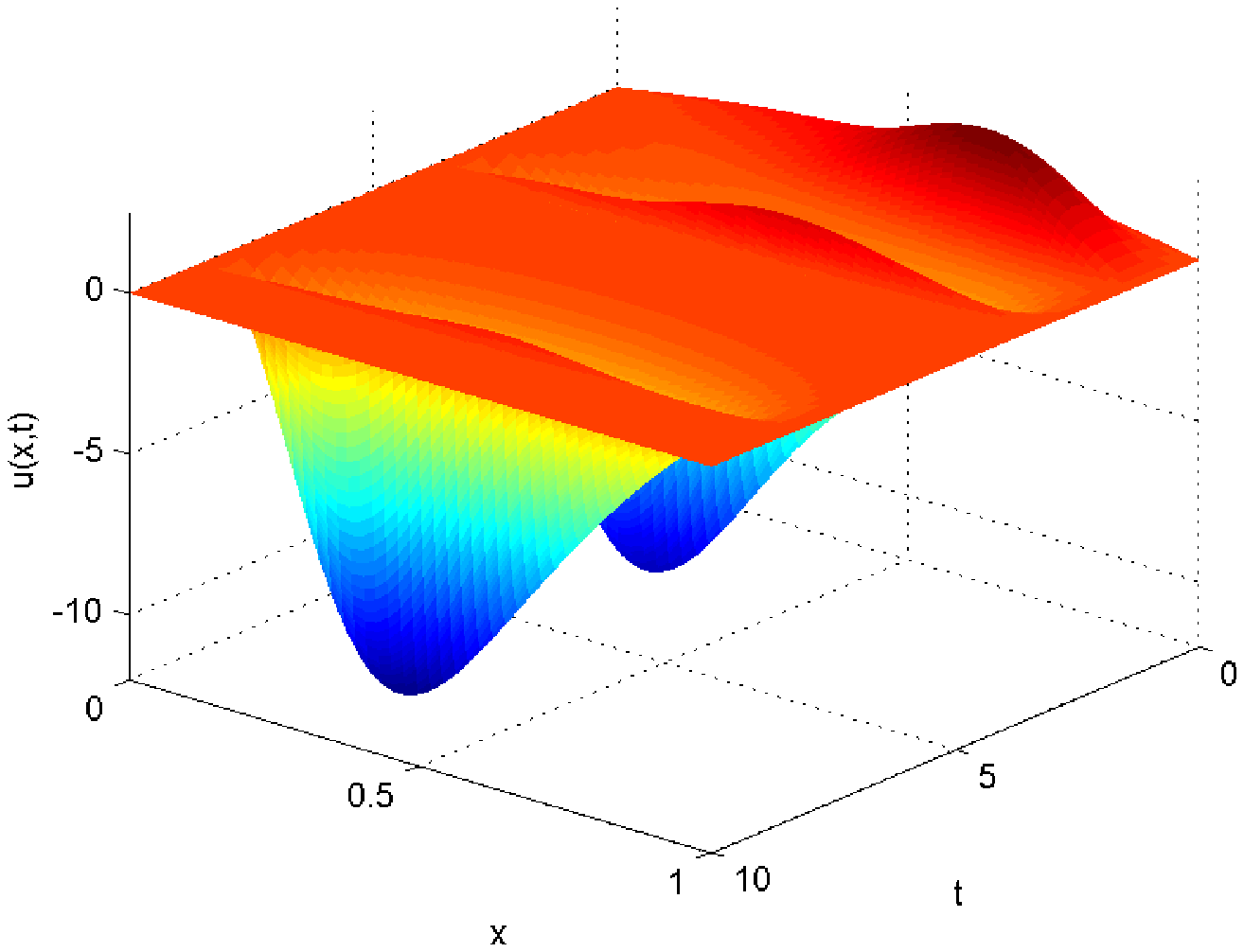}
\label{fig:subfige}
}
\caption{A 3-d landscape of the dynamics of the dispersive equation (1.1) with time-delay, $\tau=1$, when $\nu=0.01$, $\mu=0.001$ and $u(x,0)=\sin (\pi x)$ for different  functions $a(x)$; (a) $a(x)=0$; (b) $a(x)=-1$; (c) $a(x)=-2$; (d) $a(x)=-3$.}
\label{fig:Chapter4-03}
\end{figure}

\newpage
\vspace*{1.5in}
\begin{figure}[!h]
\centering
\subfigure[]{
\includegraphics[scale=0.4]{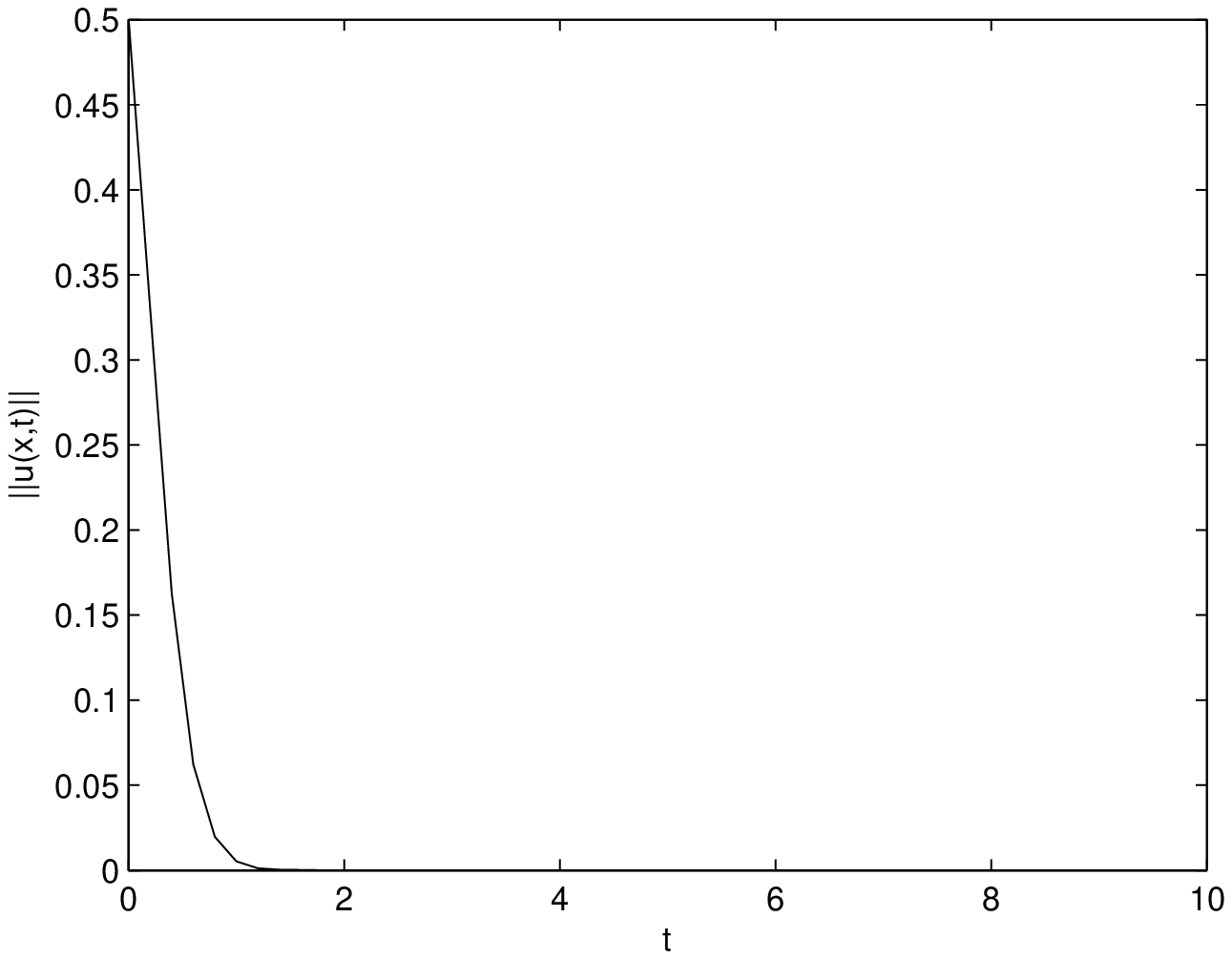}
\label{fig:subfiga}
}
\subfigure[]{
\includegraphics[scale=0.4]{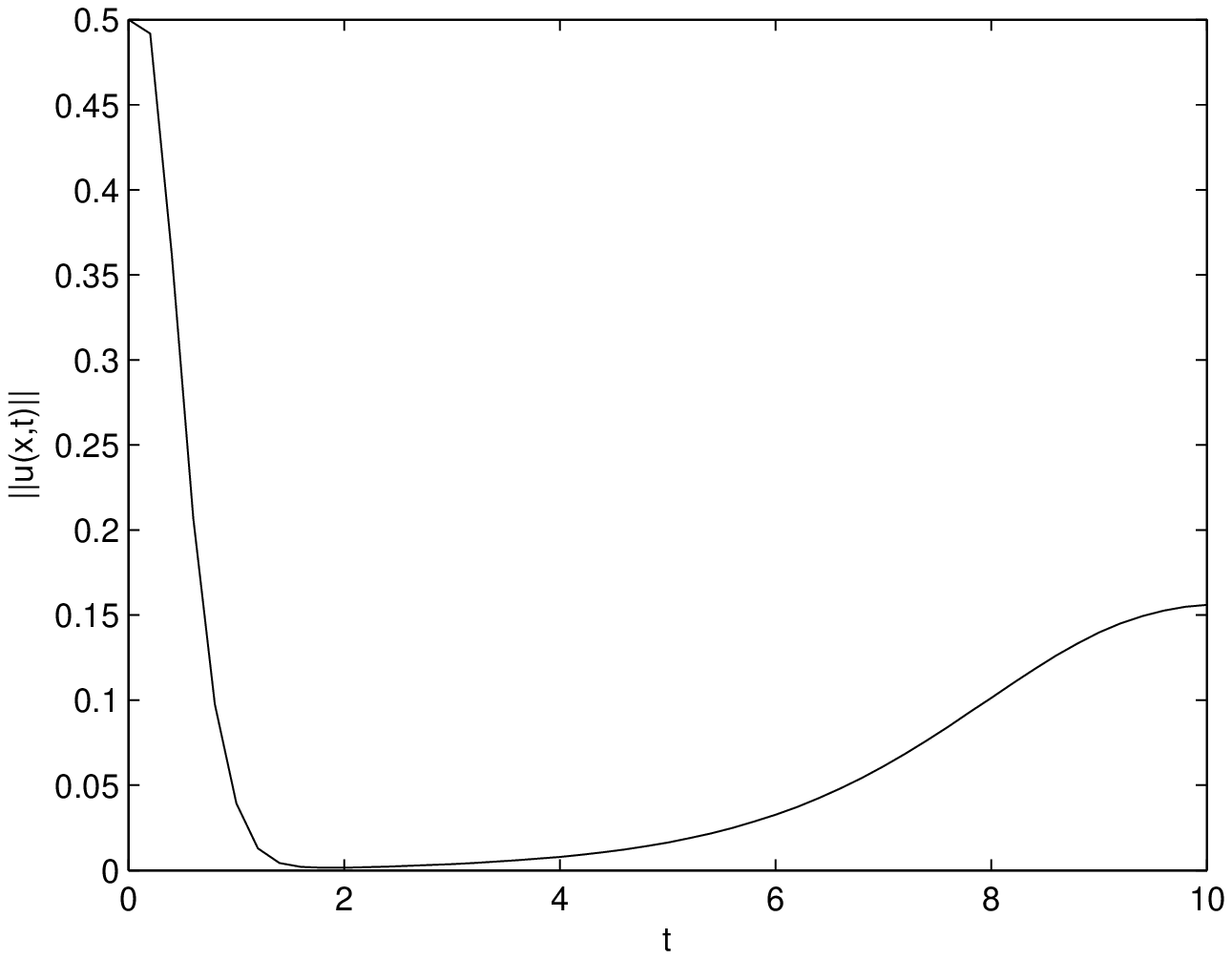}
\label{fig:subfigb}
}
\subfigure[]{
\includegraphics[scale=0.4]{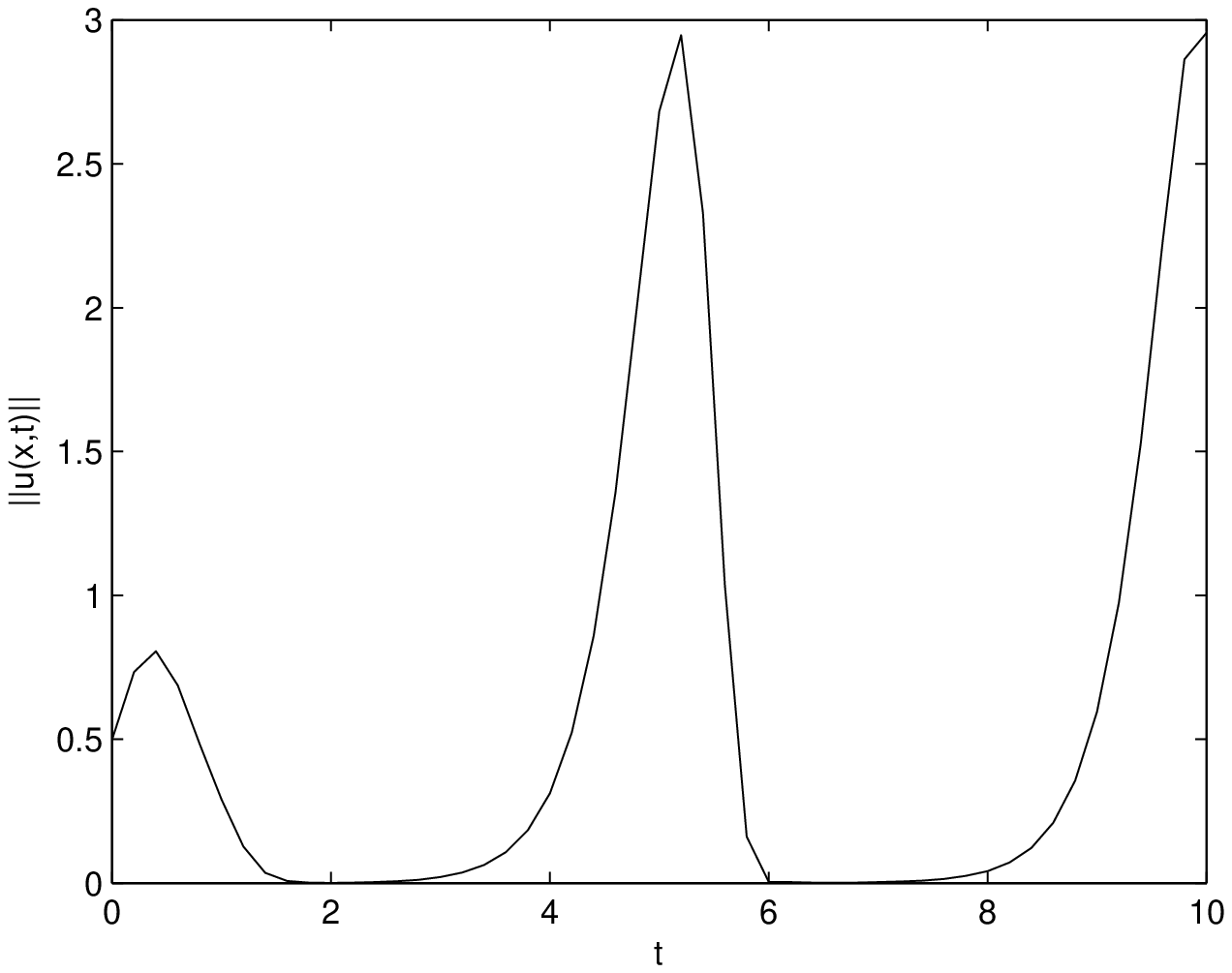}
\label{fig:subfigf}
}
\subfigure[]{
\includegraphics[scale=0.4]{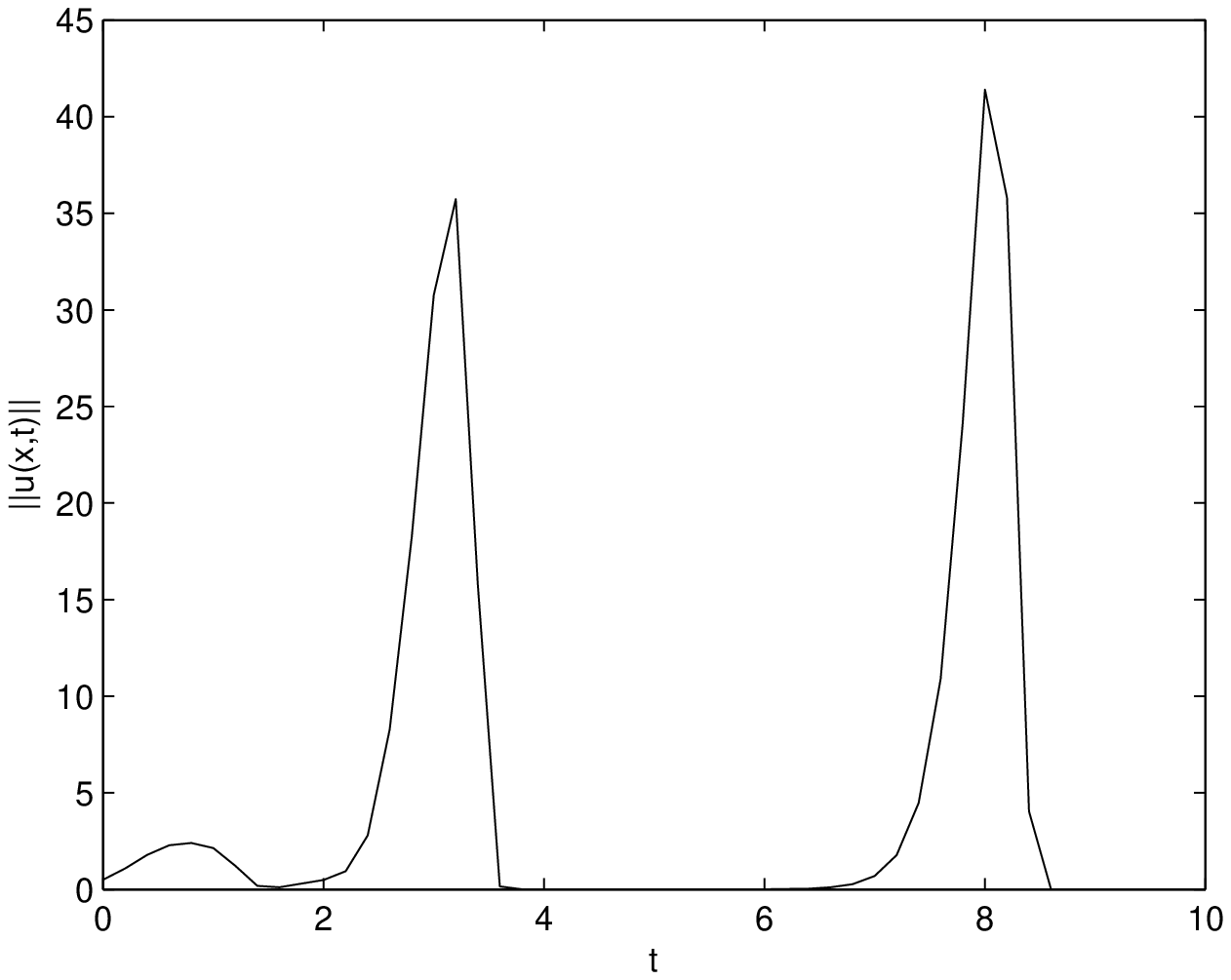}
\label{fig:subfige}
}
\caption{The $L^2$-norms $||u(x,t)||$   when the time-delay $\tau=1$; (a)  $a(x)=0$; $a(x)=-1$; (c) $a(x)=-2$; (d) $a(x)=-3$.}
\label{fig:Chapter4-03}
\end{figure}

\newpage
\vspace*{1.5in}
\begin{figure}[!h]
\centering
\subfigure[]{
\includegraphics[scale=0.4]{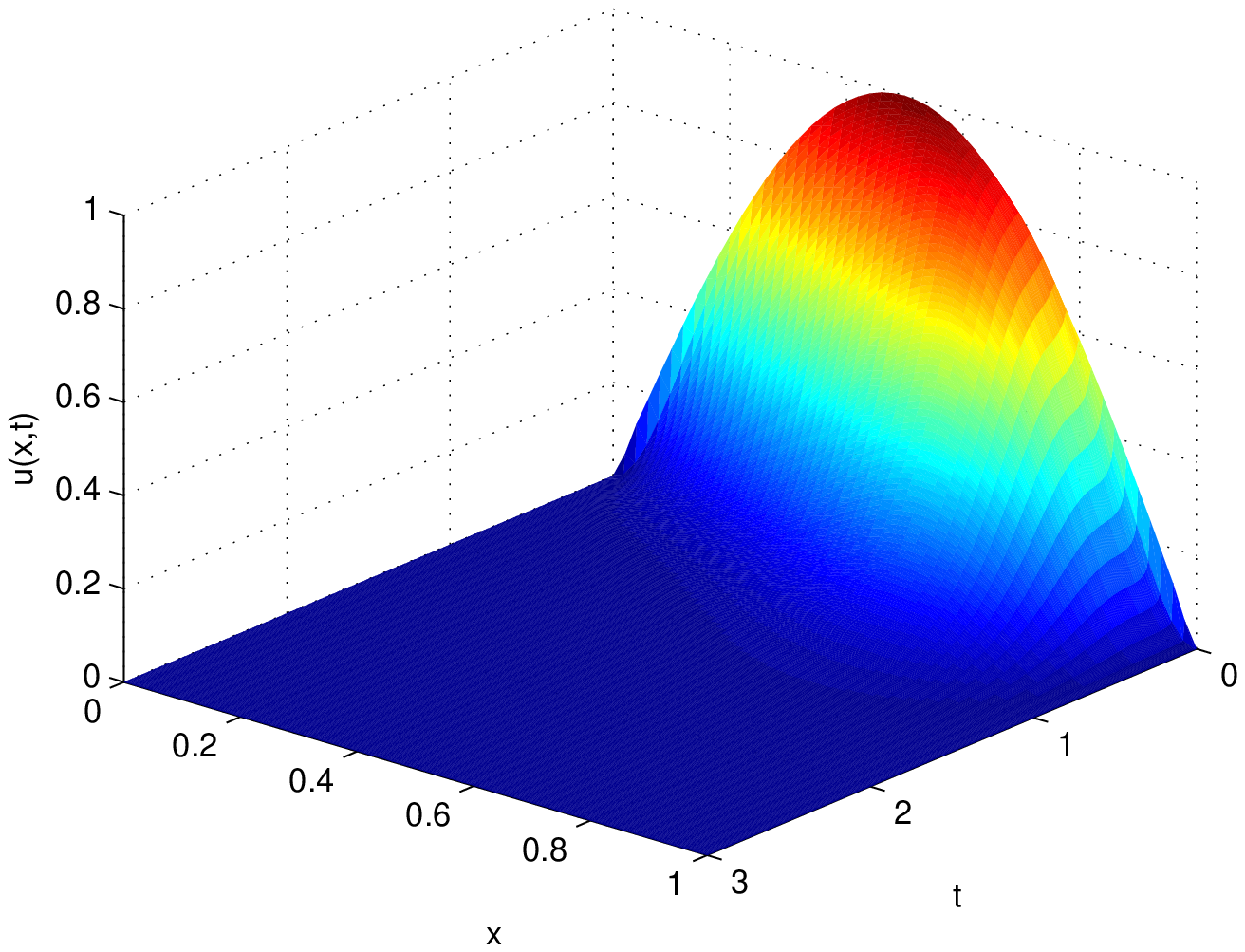}
\label{fig:subfiga}
}
\subfigure[]{
\includegraphics[scale=0.4]{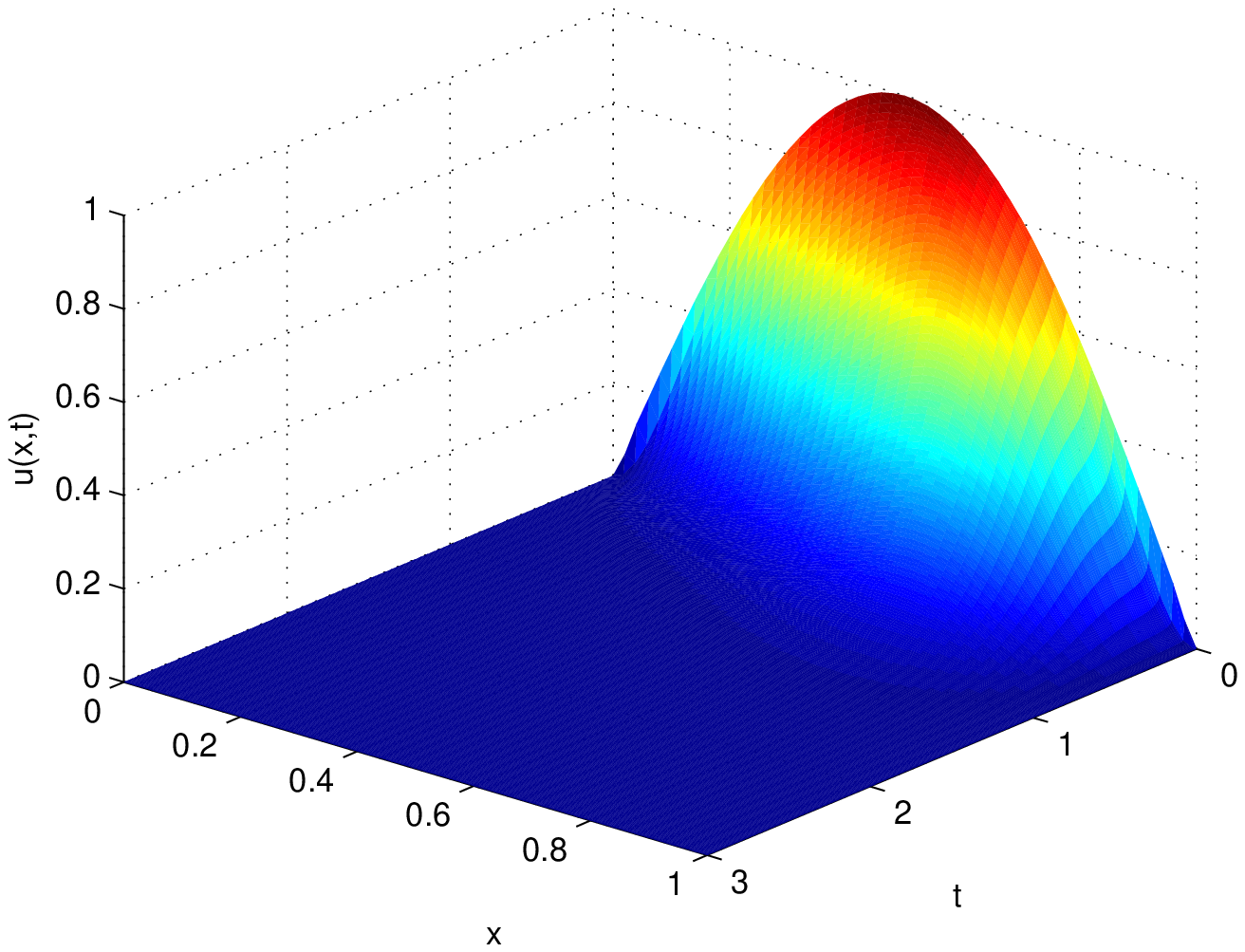}
\label{fig:subfigb}
}
\subfigure[]{
\includegraphics[scale=0.4]{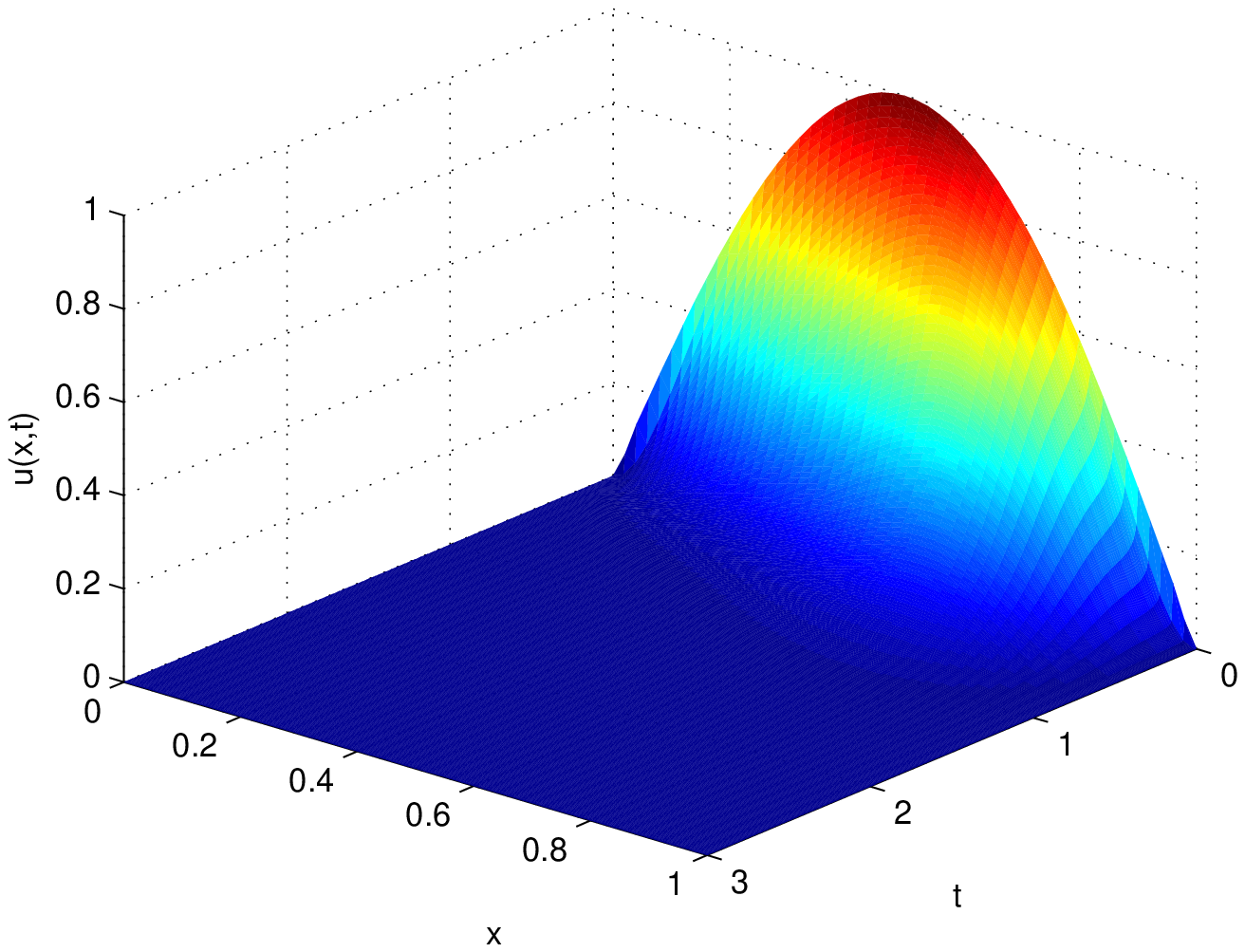}
\label{fig:subfigf}
}
\subfigure[]{
\includegraphics[scale=0.4]{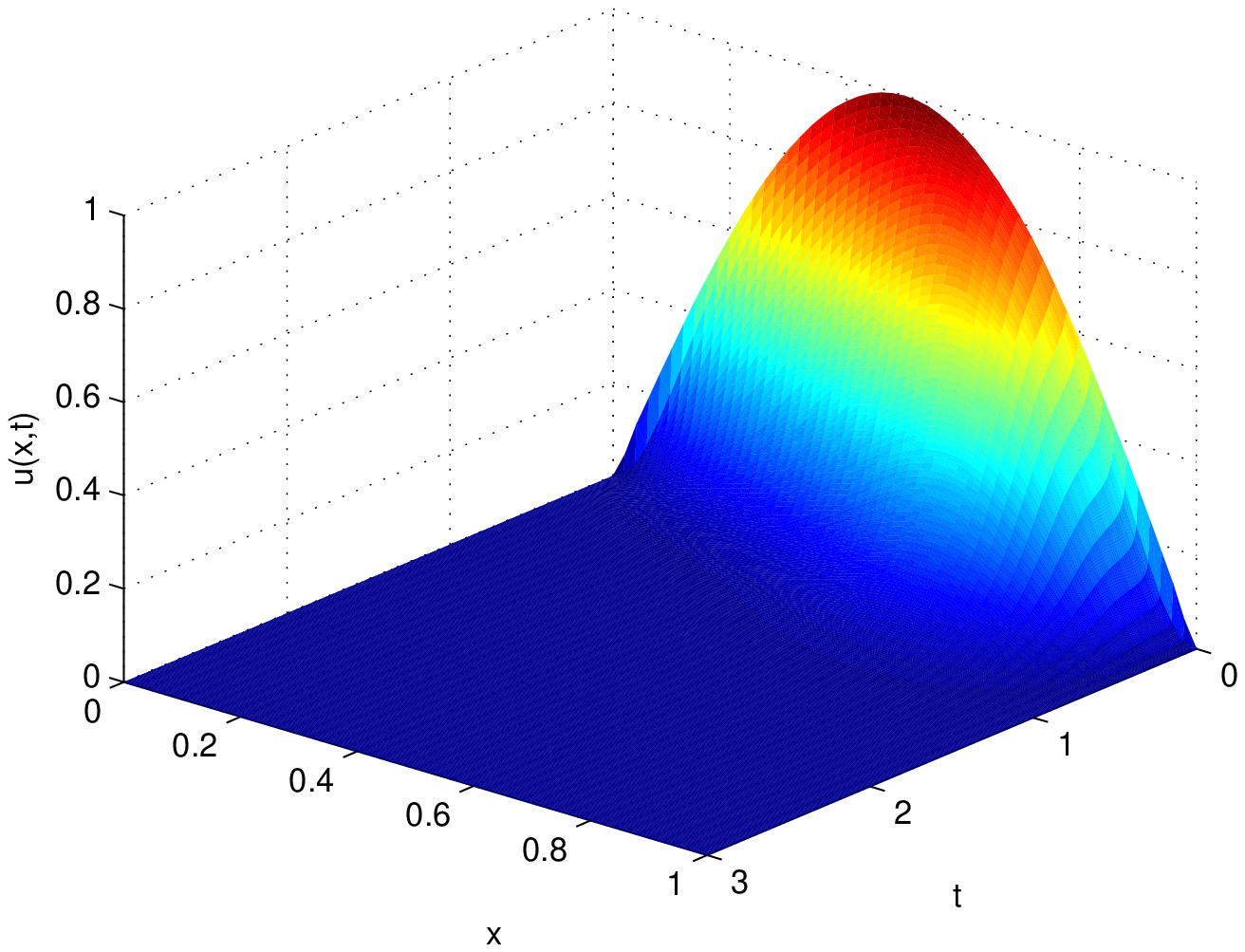}
\label{fig:subfige}
}
\caption{A 3-d landscape of the dynamics of the dispersive equation (1.1) with time-delay, $\tau=1$, when $\nu=0.01$, $\mu=0.001$ and $u(x,0)=\sin (\pi x)$ for different functions $a(x)$; (a) $a(x)=1$; (b) $a(x)=1+x$; (c) $a(x)=1+\sin(\pi x)$; (d) $a(x)=1+2x+\sin(2\pi x)$.}
\label{fig:Chapter4-03}
\end{figure}

\newpage
\vspace*{1.5in}
\begin{figure}[!h]
\centering
\subfigure[]{
\includegraphics[scale=0.4]{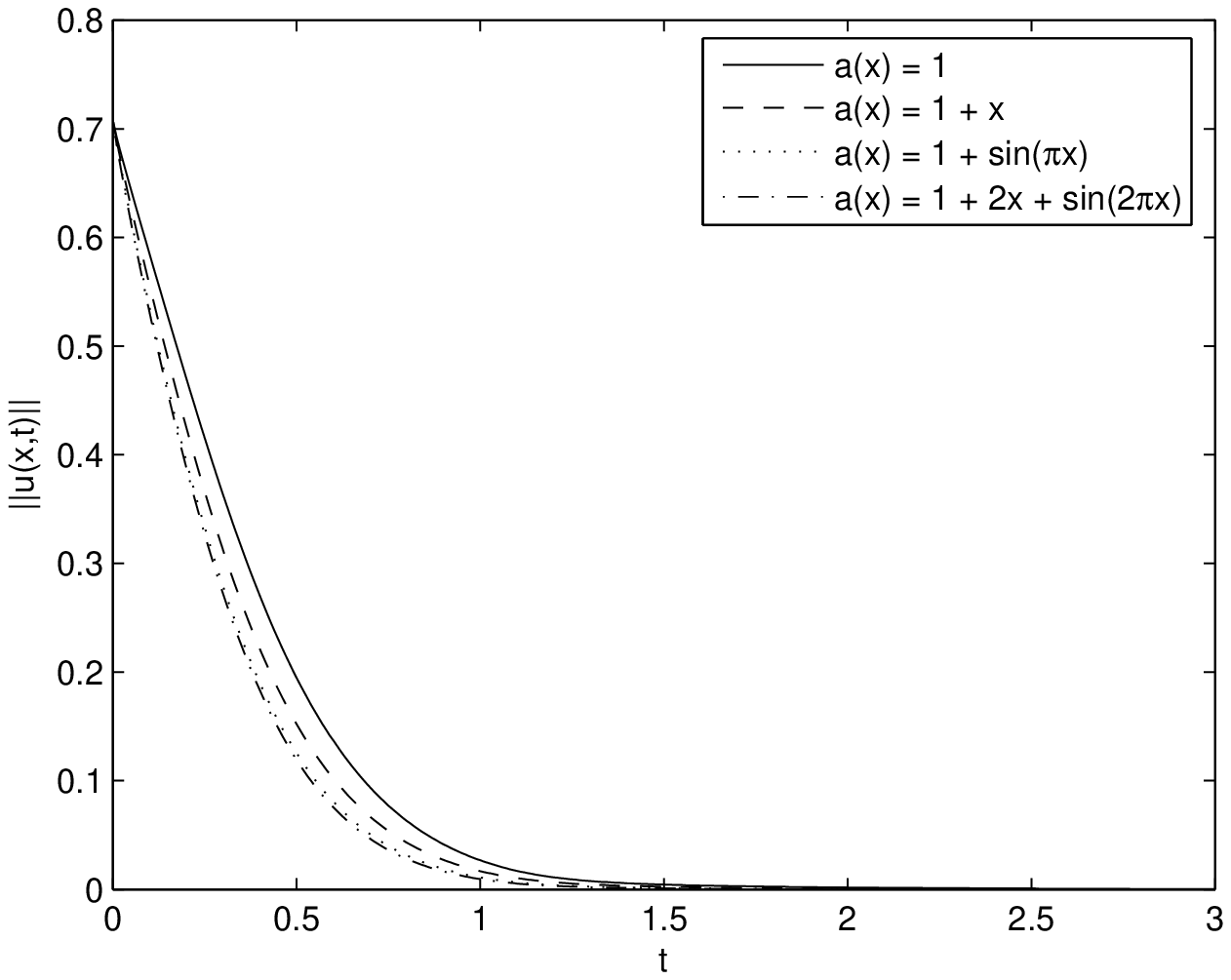}
\label{fig:subfiga}
}
\subfigure[]{
\includegraphics[scale=0.4]{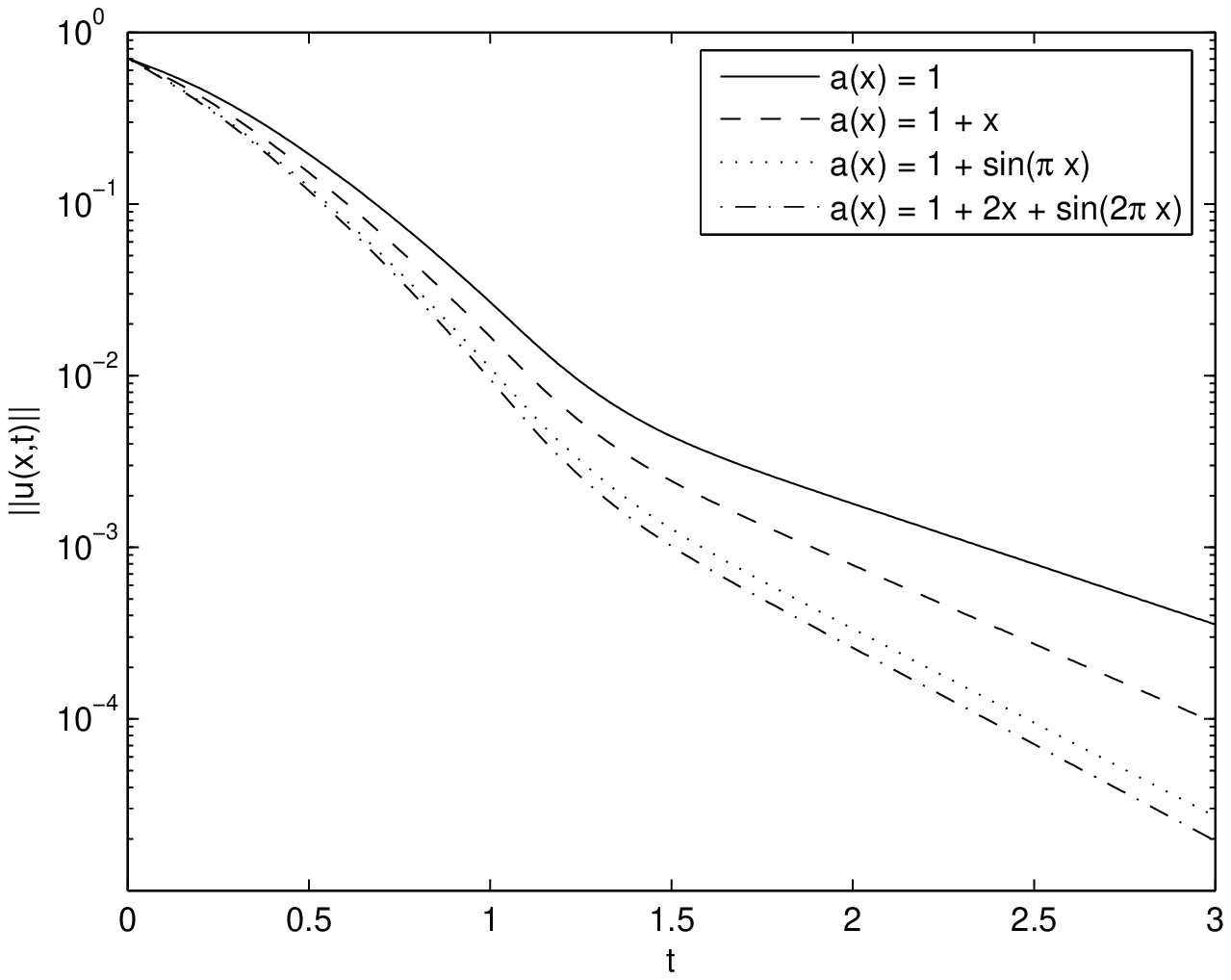}
\label{fig:subfigb}
}
\subfigure[]{
\includegraphics[scale=0.4]{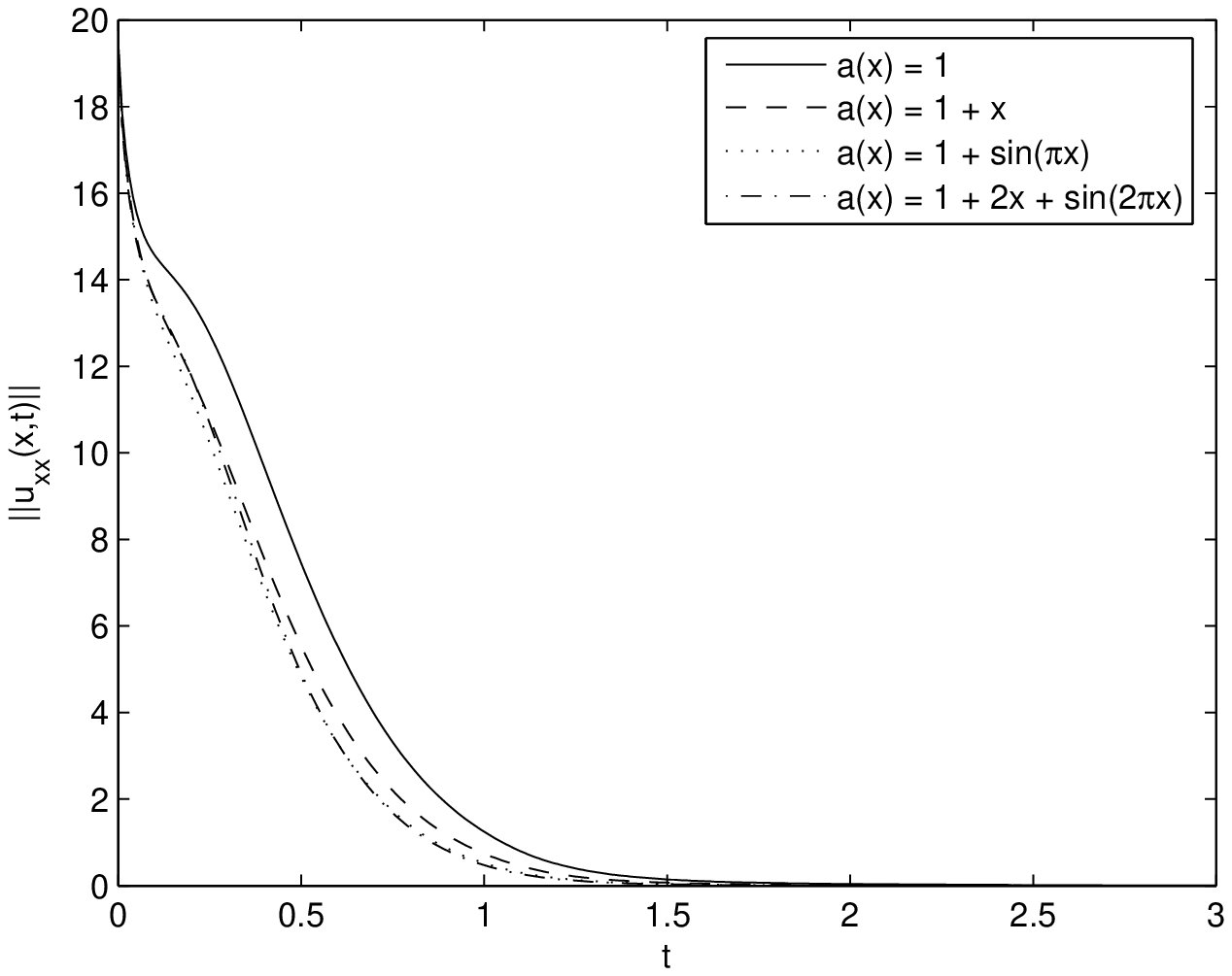}
\label{fig:subfigf}
}
\subfigure[]{
\includegraphics[scale=0.4]{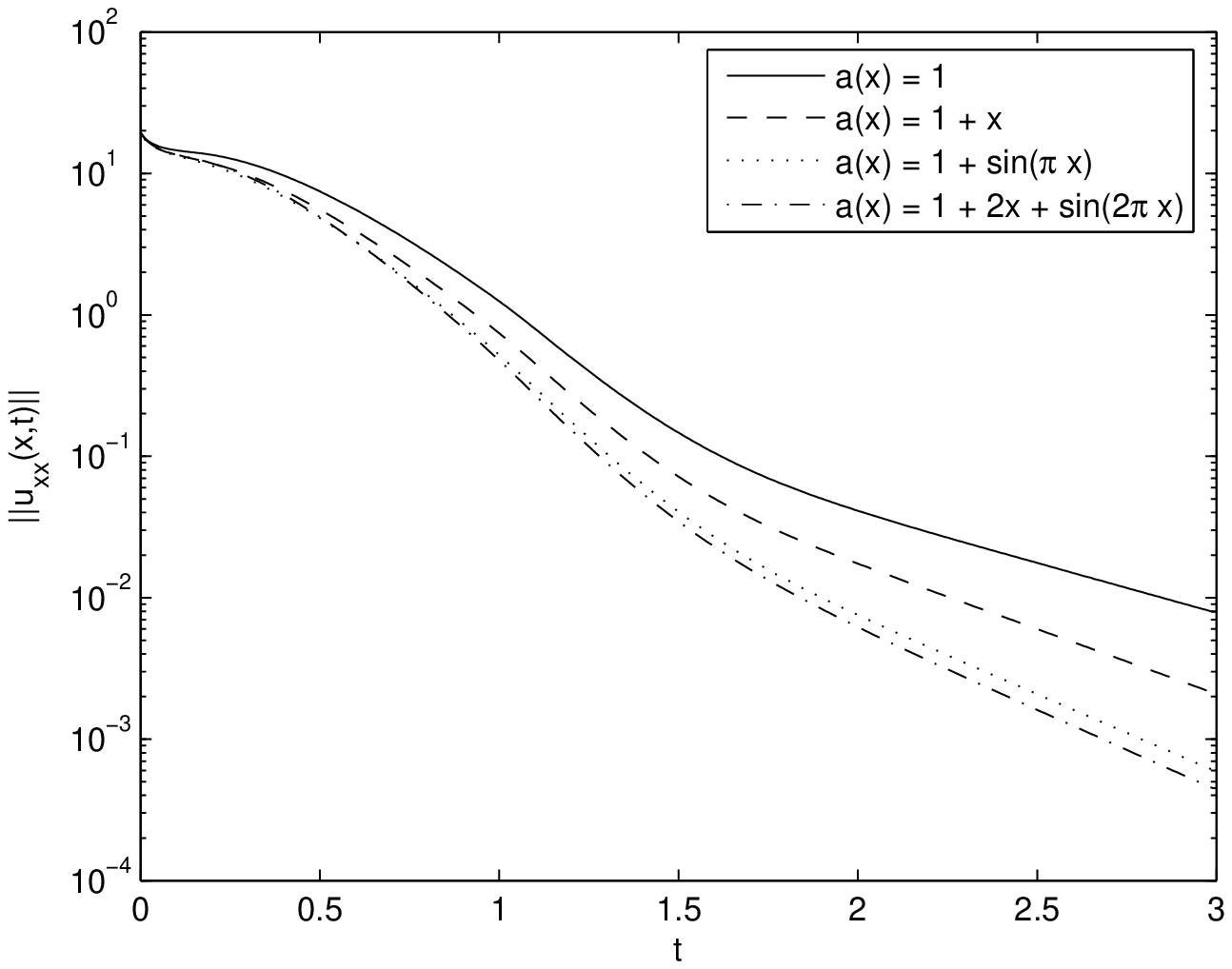}
\label{fig:subfige}
}
\caption{The $L^2$-norms $||u(x,t)||$ and $||u_{xx}(x,t)||$  when the time-delay $\tau=1$; (a)  $||u(x,t)||$ vs. time for different $a(x)$; (b) A semi-log plot of $||u(x,t)||$ vs. time for different $a(x)$; (c)   $||u_{xx}(x,t)||$ vs. time for different $a(x)$; (d) A semi-log plot of  $||u_{xx}(x,t)||$ vs. time for different $a(x)$.}
\label{fig:Chapter4-03}
\end{figure}

\newpage
\vspace*{1.5in}
\begin{figure}[!h]
\centering
\subfigure[]{
\includegraphics[scale=0.4]{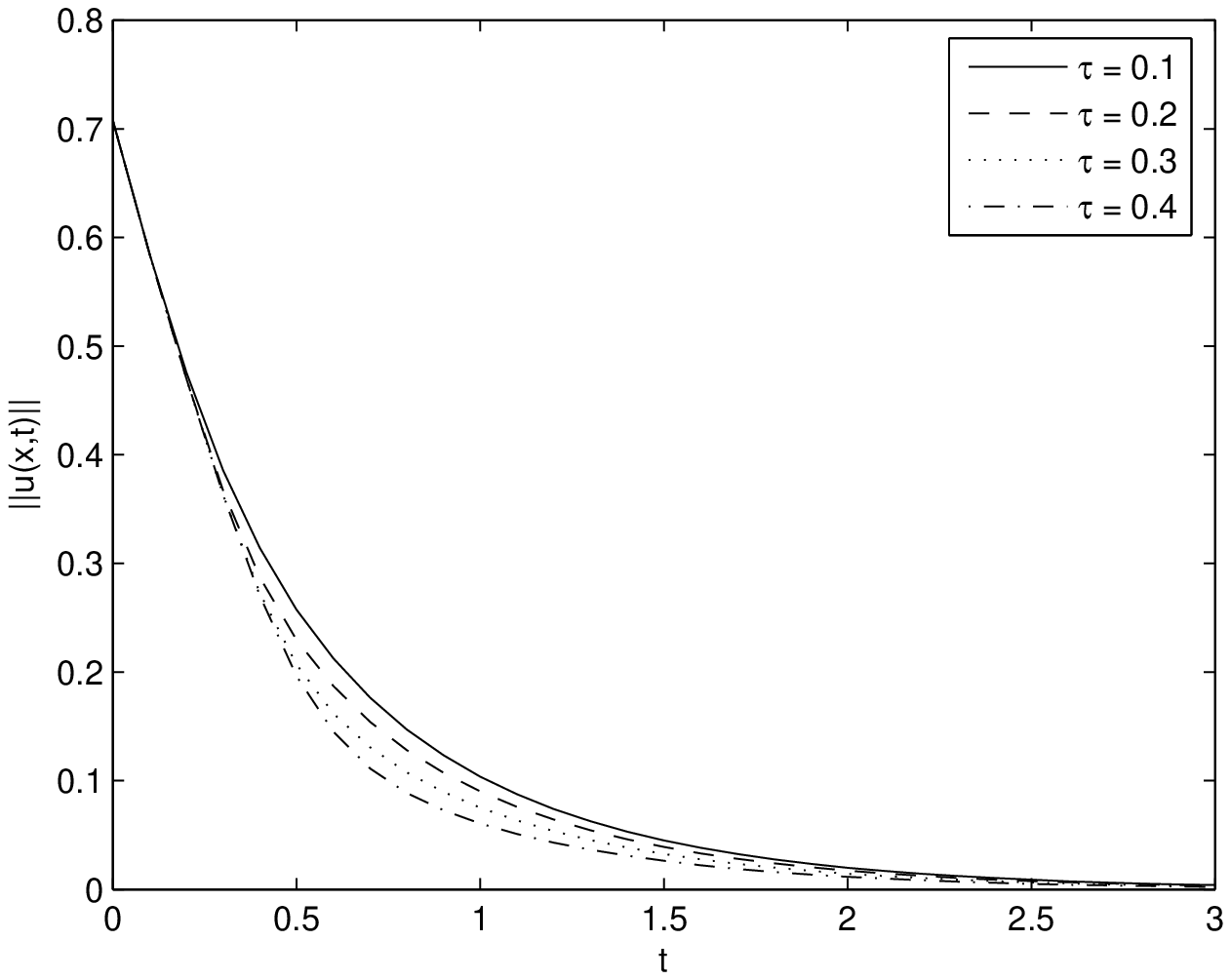}
\label{fig:subfiga}
}
\subfigure[]{
\includegraphics[scale=0.4]{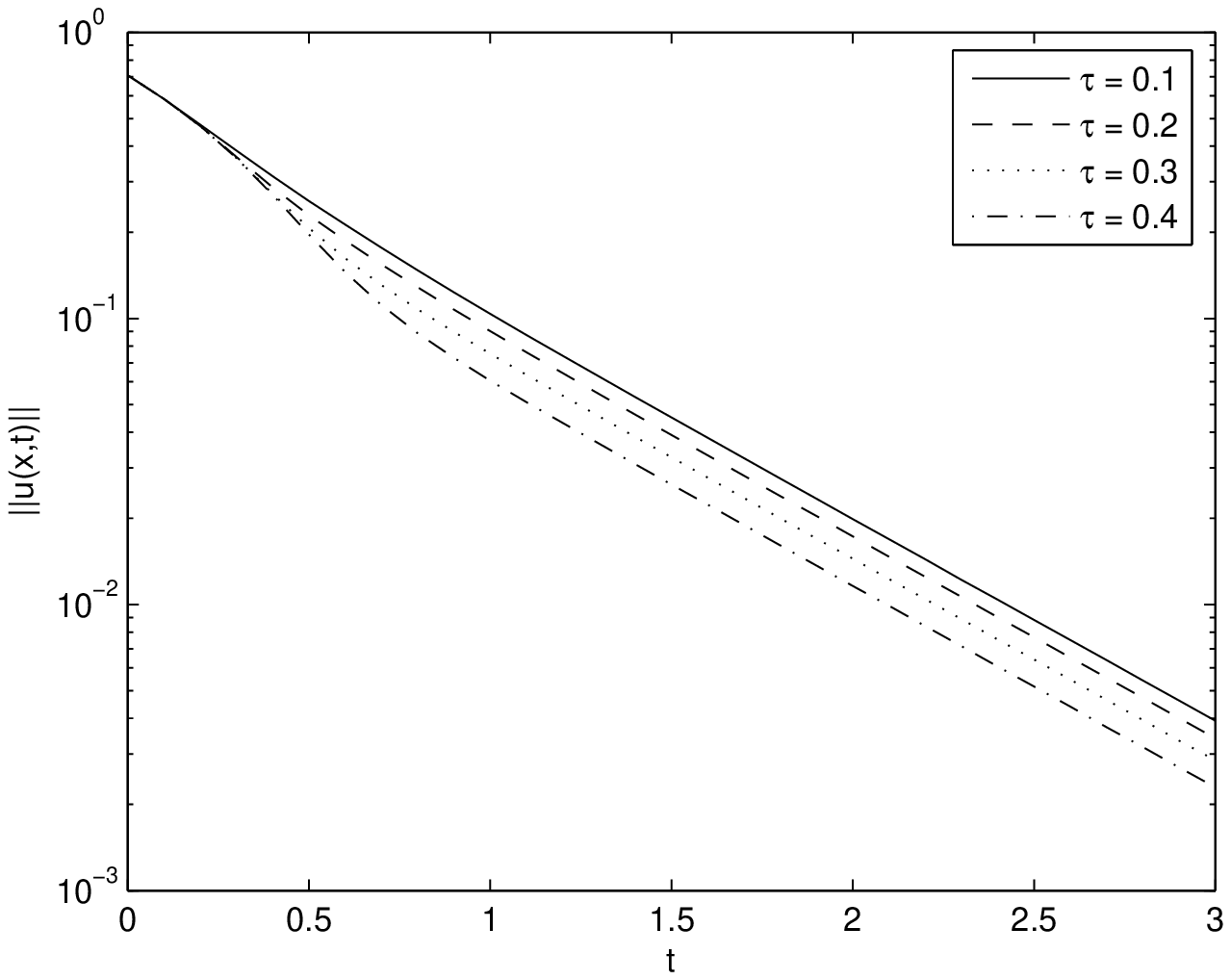}
\label{fig:subfigb}
}
\subfigure[]{
\includegraphics[scale=0.4]{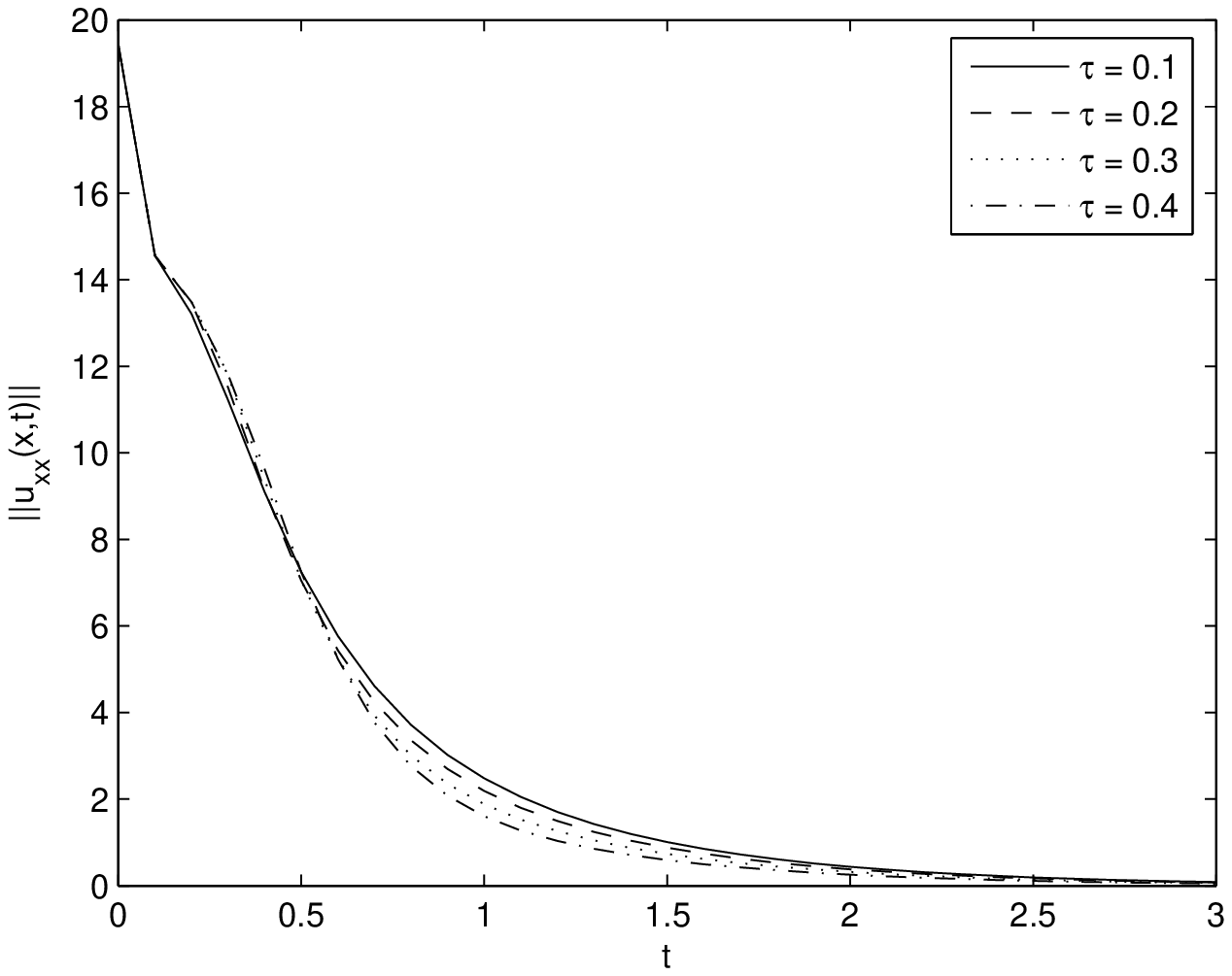}
\label{fig:subfigf}
}
\subfigure[]{
\includegraphics[scale=0.4]{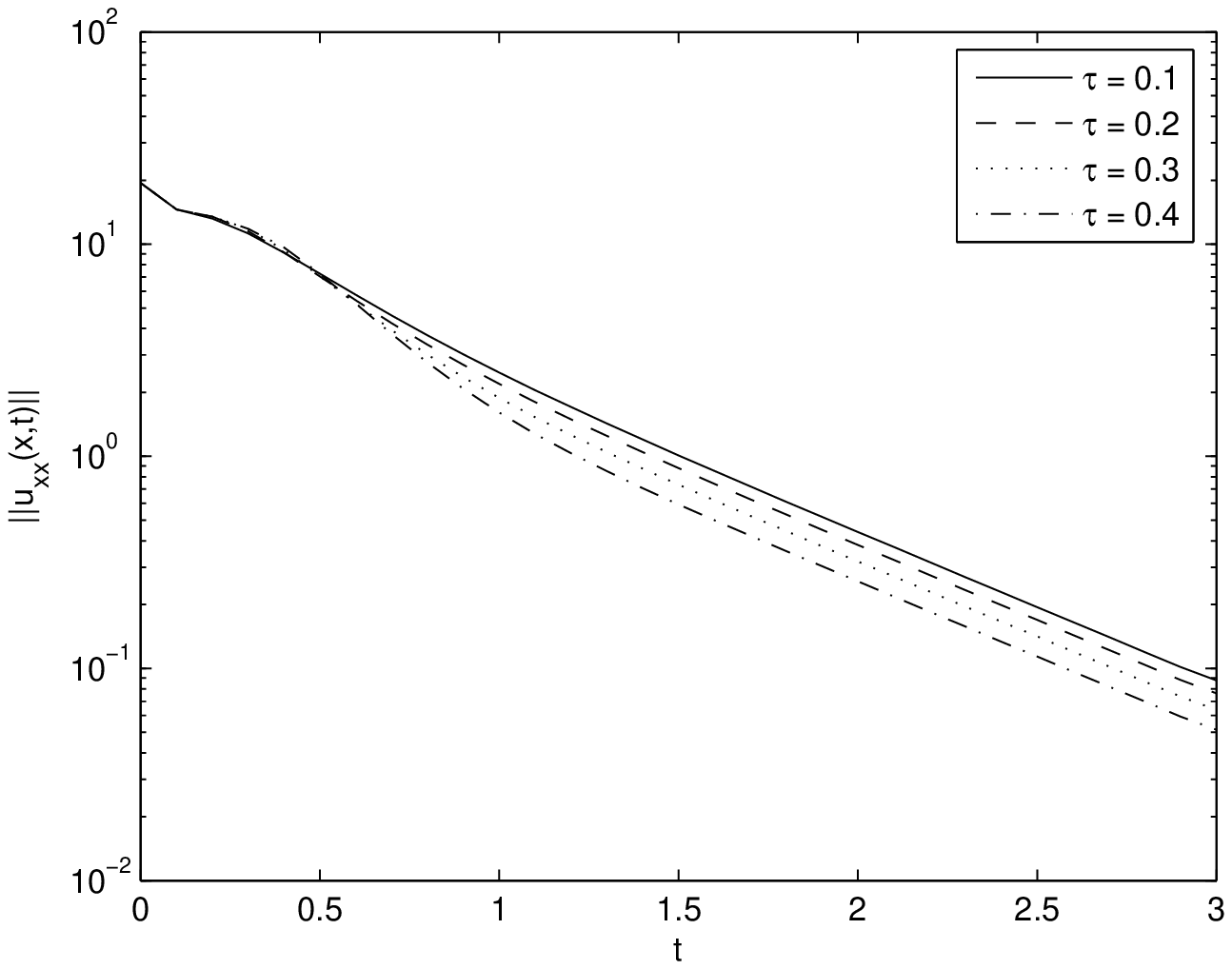}
\label{fig:subfige}
}
\caption{The $L^2$-norms $||u(x,t)||$ and $||u_{xx}(x,t)||$  when $a(x)=1$; (a)  $||u(x,t)||$ vs. time for different $\tau$; (b) A semi-log plot of $||u(x,t)||$ vs. time for different $\tau$; (c) $||u_{xx}(x,t)||$ vs. time for different $\tau$; (d) A semi-log plot of  $||u_{xx}(x,t)||$ vs. time for different $\tau$.}
\label{fig:Chapter4-03}
\end{figure}

\newpage
\vspace*{1.5in}
 \begin{figure}[!h]
\centering
\subfigure[]{
\includegraphics[scale=0.4]{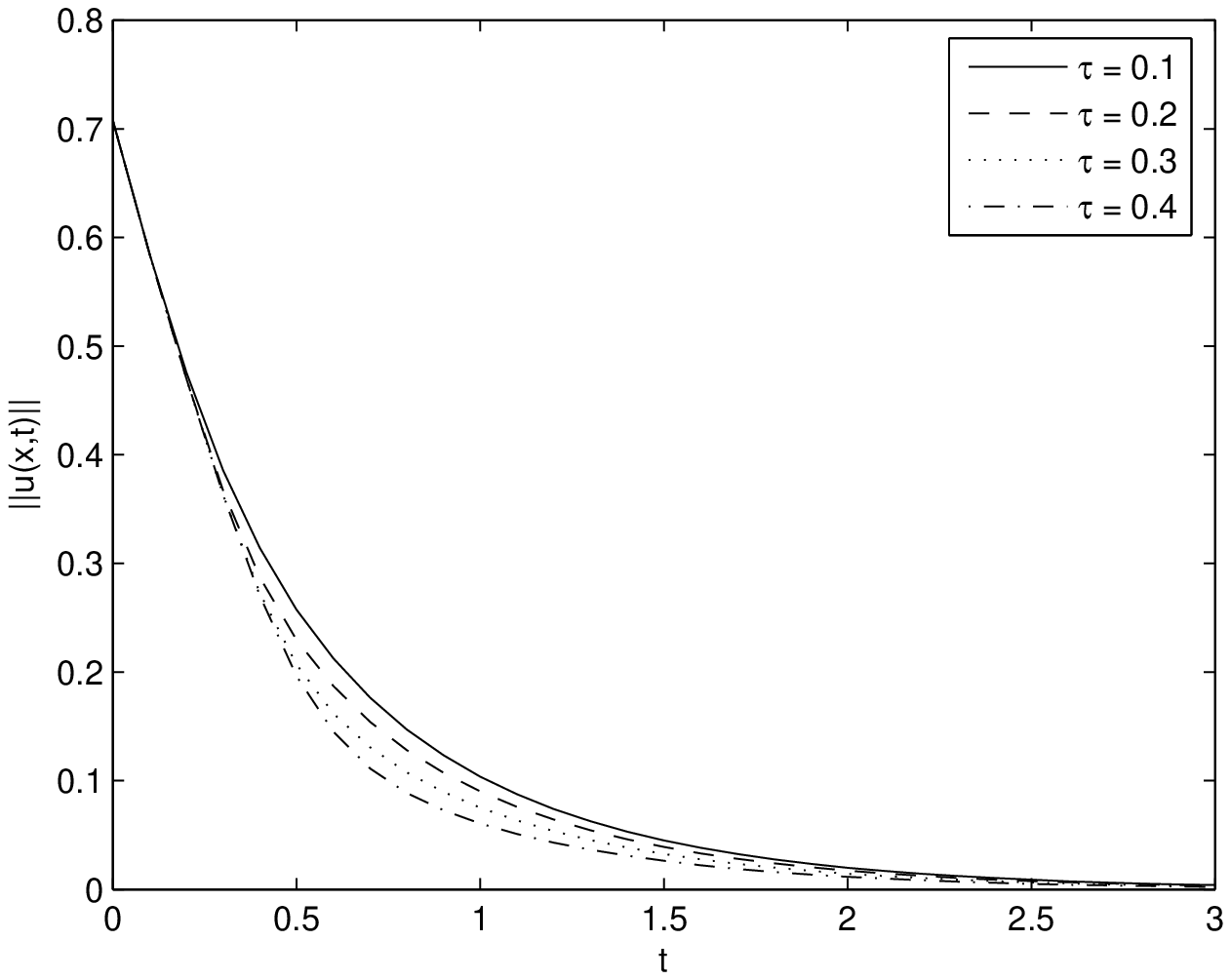}
\label{fig:subfiga}
}
\subfigure[]{
\includegraphics[scale=0.4]{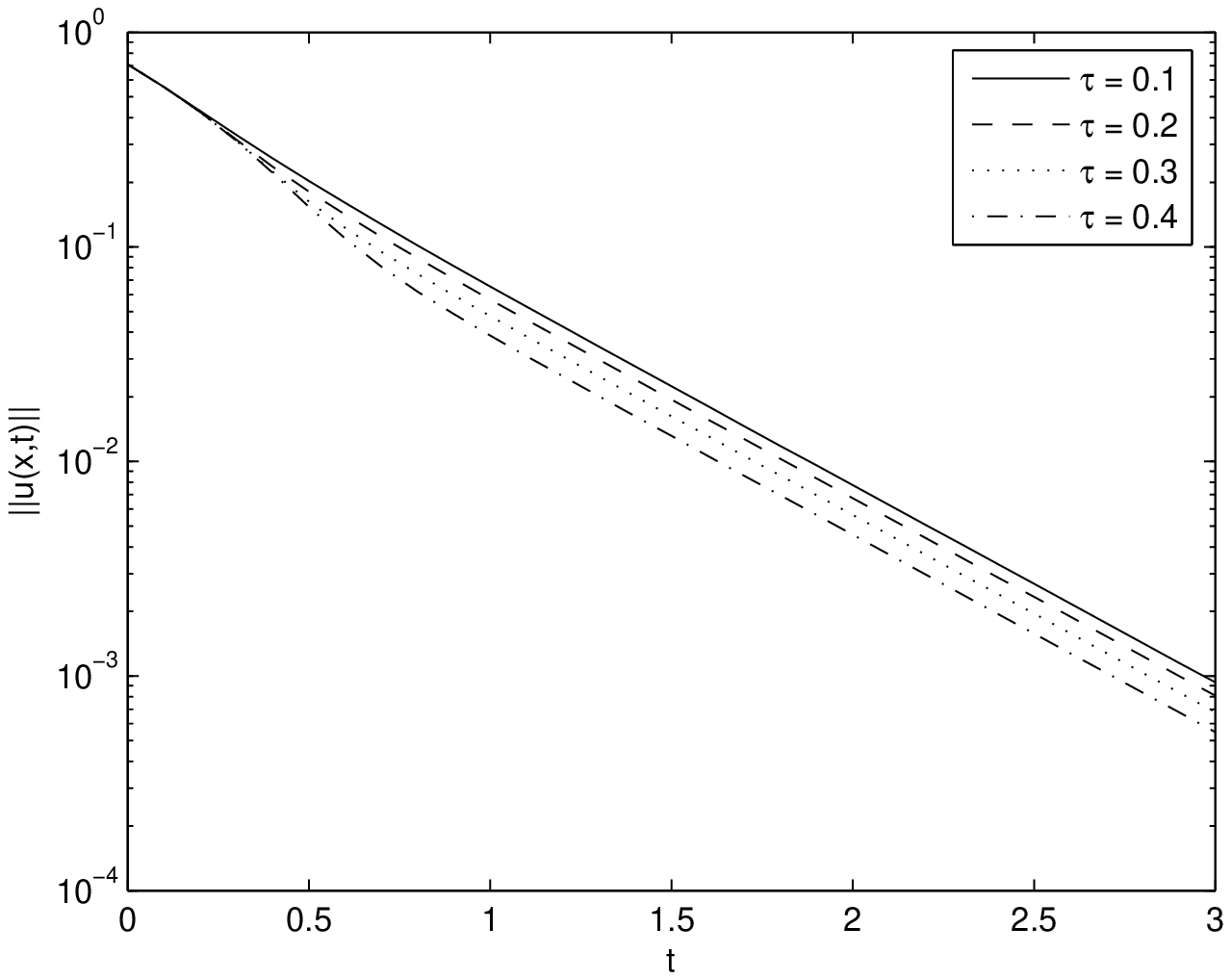}
\label{fig:subfigb}
}
\subfigure[]{
\includegraphics[scale=0.4]{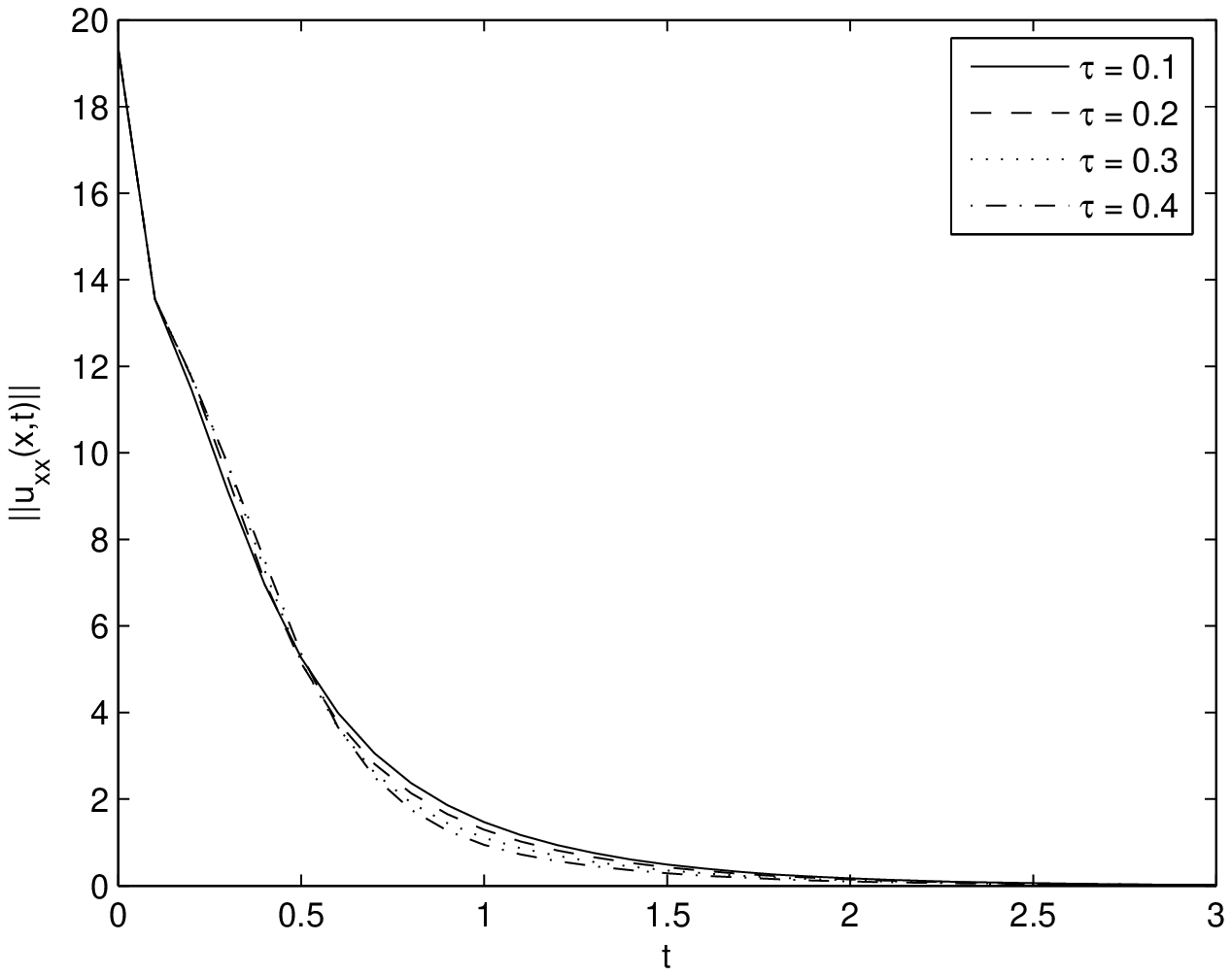}
\label{fig:subfigf}
}
\subfigure[]{
\includegraphics[scale=0.4]{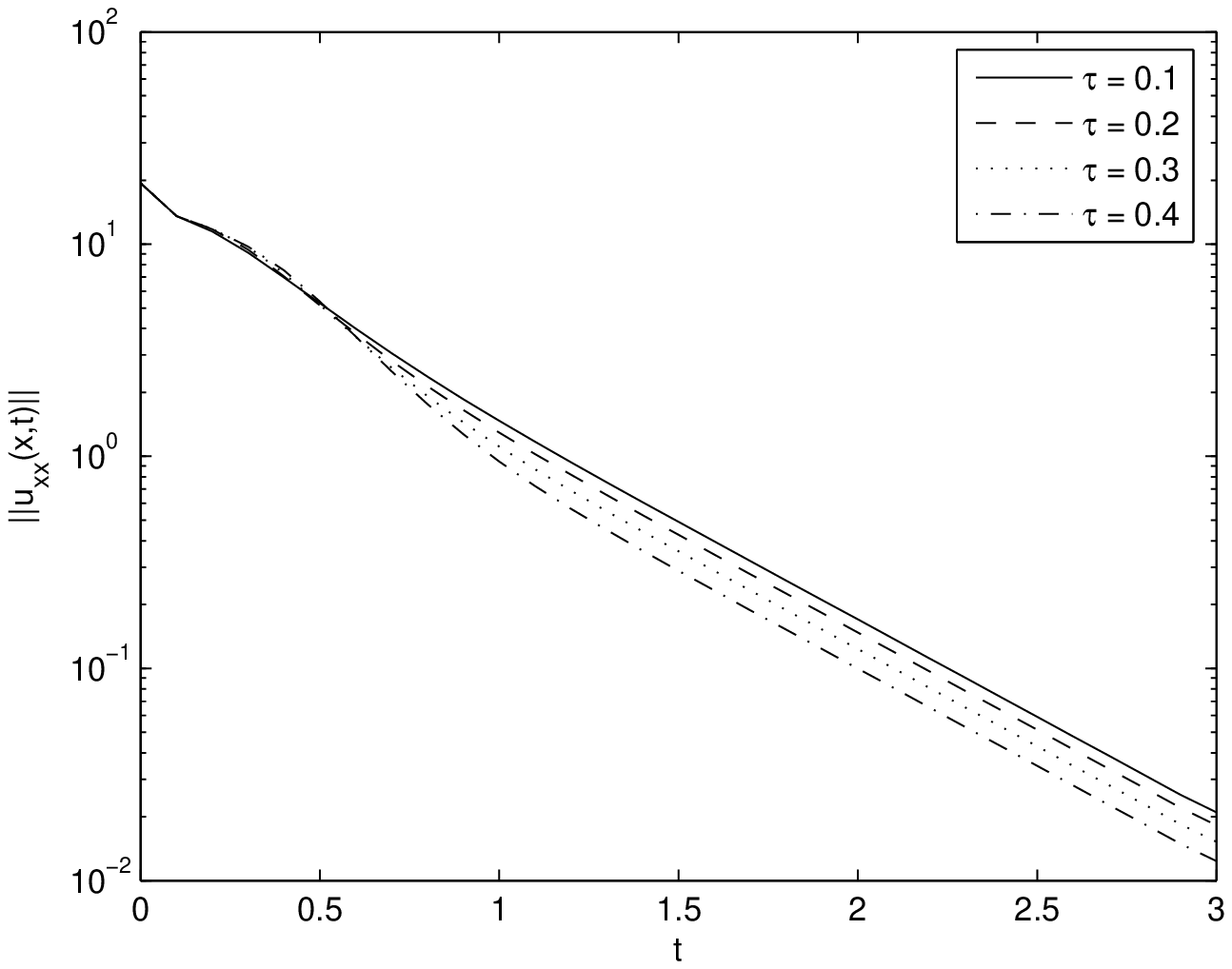}
\label{fig:subfige}
}
\caption{The $L^2$-norms $||u(x,t)||$ and $||u_{xx}(x,t)||$  when $a(x)=1+x$; (a)  $||u(x,t)||$ vs. time for different $\tau$; (b) A semi-log plot of $||u(x,t)||$ vs. time for different $\tau$; (c) $||u_{xx}(x,t)||$ vs. time for different $\tau$; (d) A semi-log plot of  $||u_{xx}(x,t)||$ vs. time for different $\tau$.}
\label{fig:Chapter4-03}
\end{figure}

\newpage
\vspace*{1.5in}
\begin{figure}[!h]
\centering
\subfigure[]{
\includegraphics[scale=0.4]{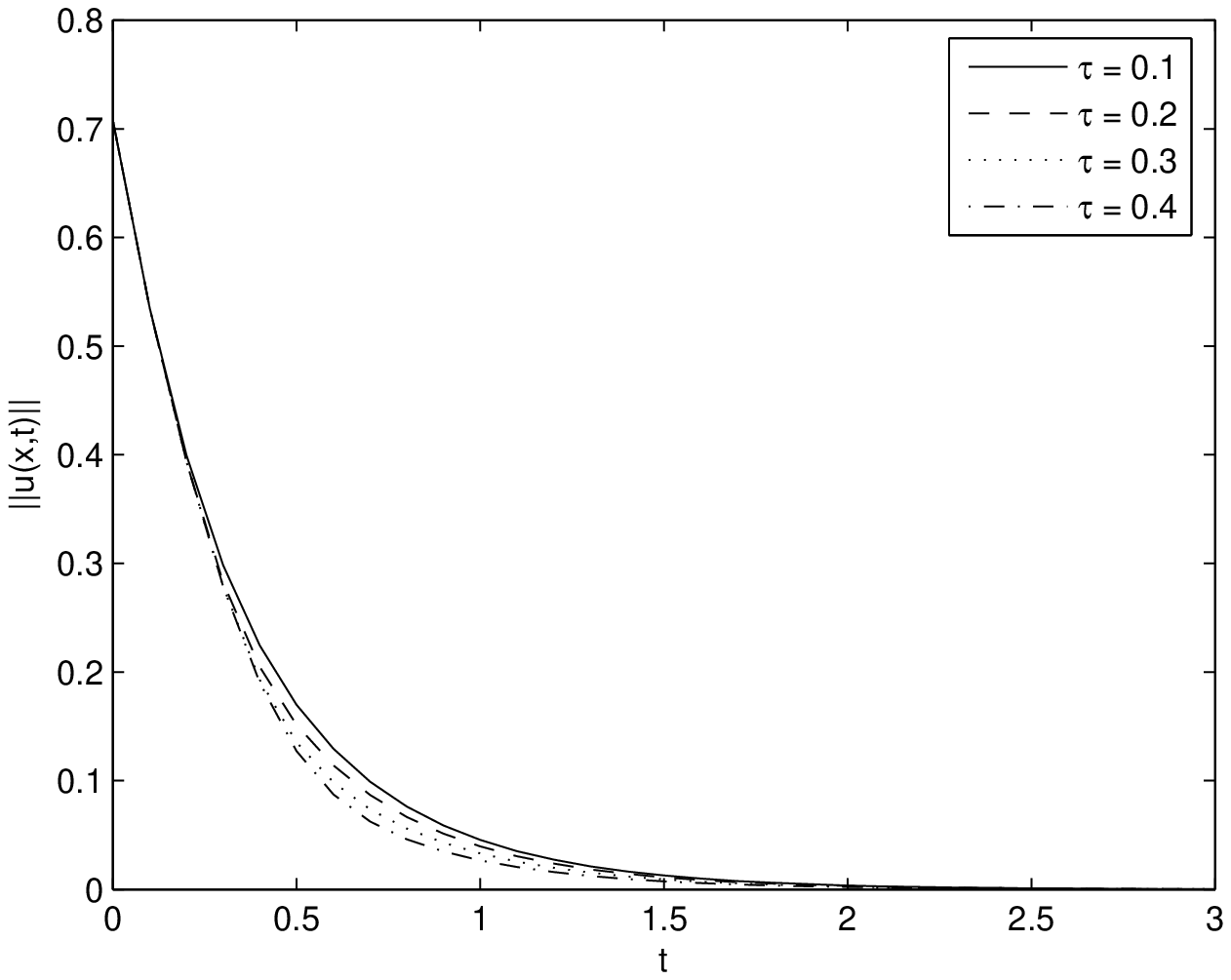}
\label{fig:subfiga}
}
\subfigure[]{
\includegraphics[scale=0.4]{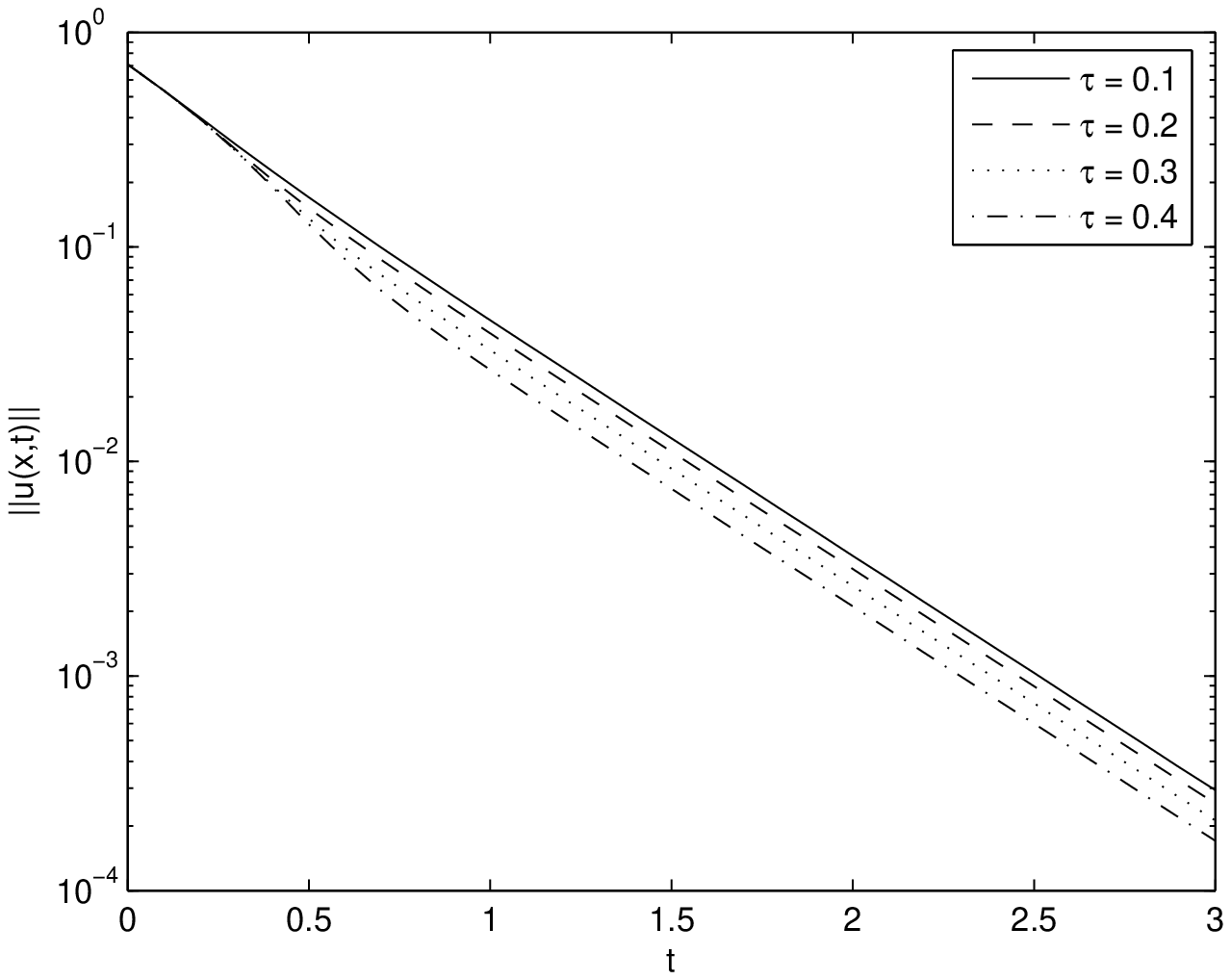}
\label{fig:subfigb}
}
\subfigure[]{
\includegraphics[scale=0.4]{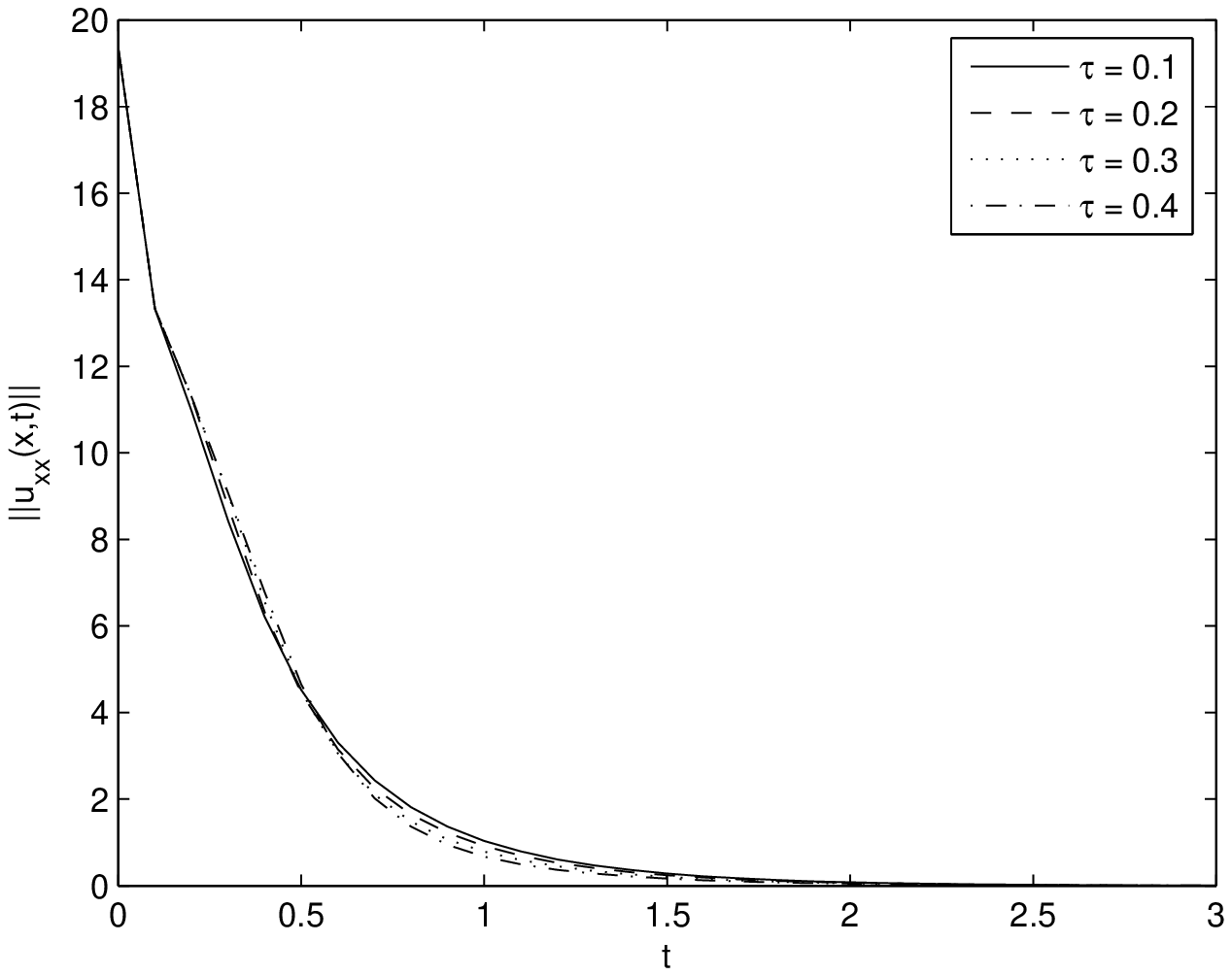}
\label{fig:subfigf}
}
\subfigure[]{
\includegraphics[scale=0.4]{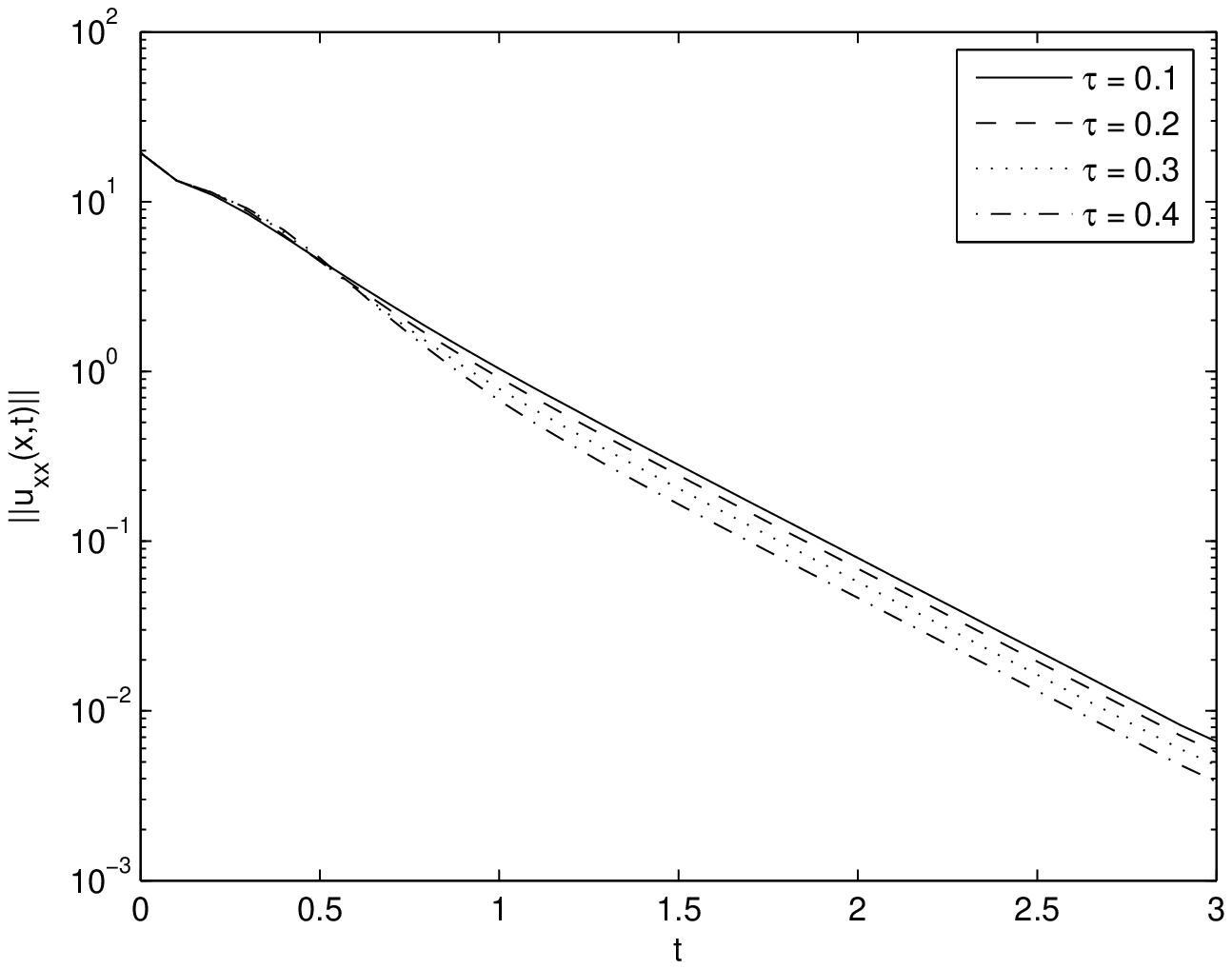}
\label{fig:subfige}
}
\caption{The $L^2$-norms $||u(x,t)||$ and $||u_{xx}(x,t)||$  when $a(x)=1+\sin(\pi x)$; (a)  $||u(x,t)||$ vs. time for different $\tau$; (b) A semi-log plot of $||u(x,t)||$ vs. time for different $\tau$; (c) $||u_{xx}(x,t)||$ vs. time for different $\tau$; (d) A semi-log plot of  $||u_{xx}(x,t)||$ vs. time for different $\tau$.}
\label{fig:Chapter4-03}
\end{figure}

\newpage
\vspace*{1.5in}
\begin{figure}[!h]
\centering
\subfigure[]{
\includegraphics[scale=0.4]{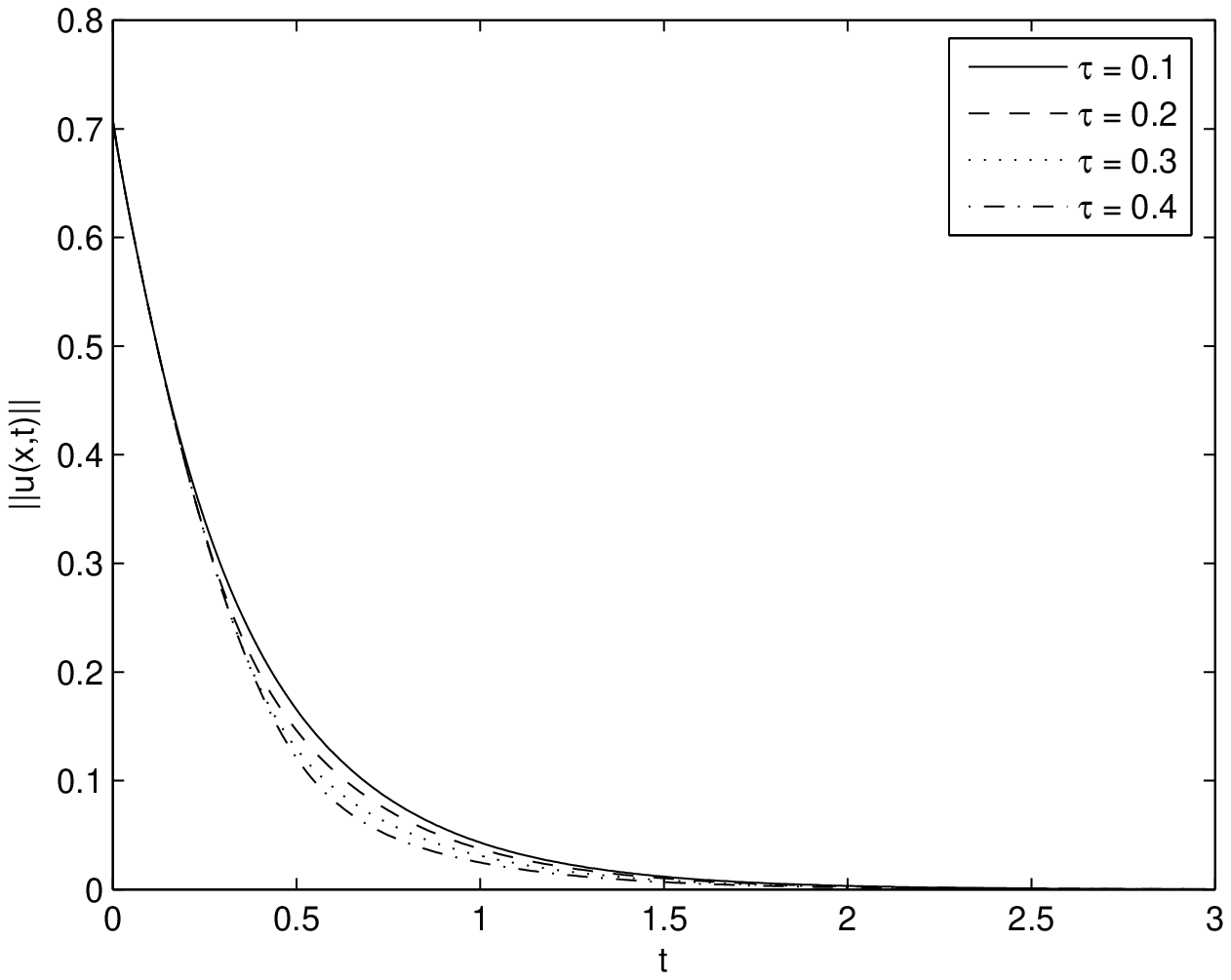}
\label{fig:subfiga}
}
\subfigure[]{
\includegraphics[scale=0.4]{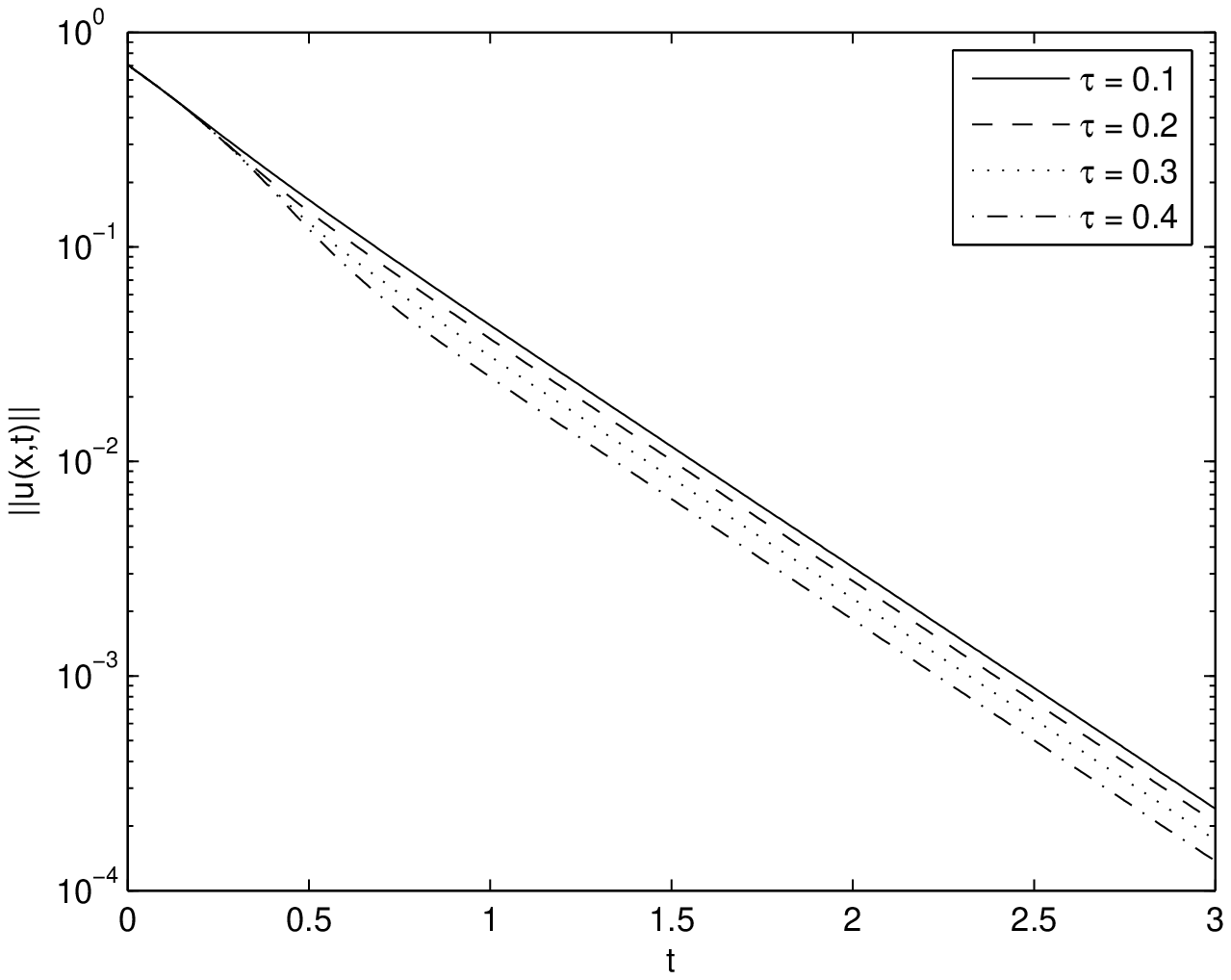}
\label{fig:subfigb}
}
\subfigure[]{
\includegraphics[scale=0.4]{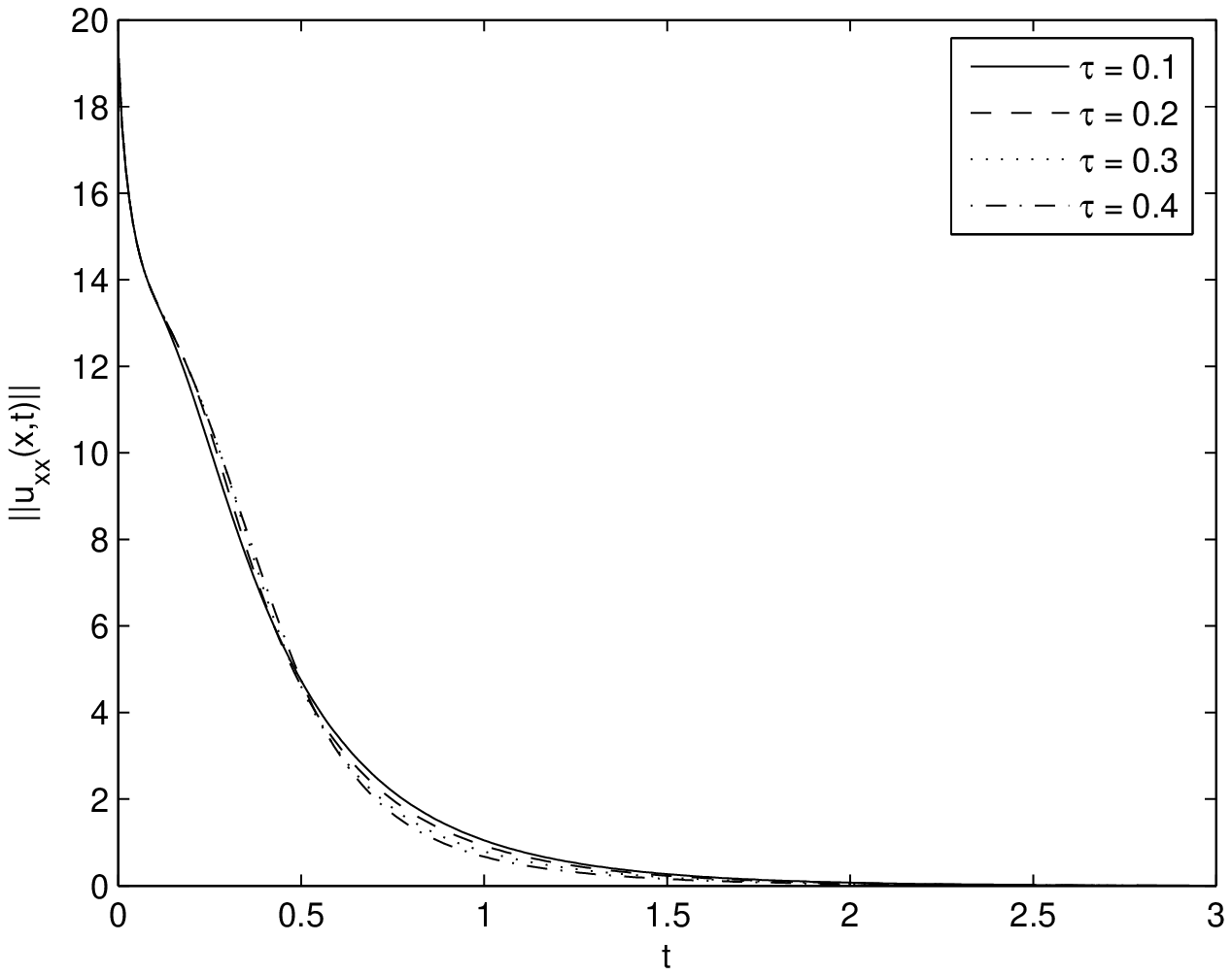}
\label{fig:subfigf}
}
\subfigure[]{
\includegraphics[scale=0.4]{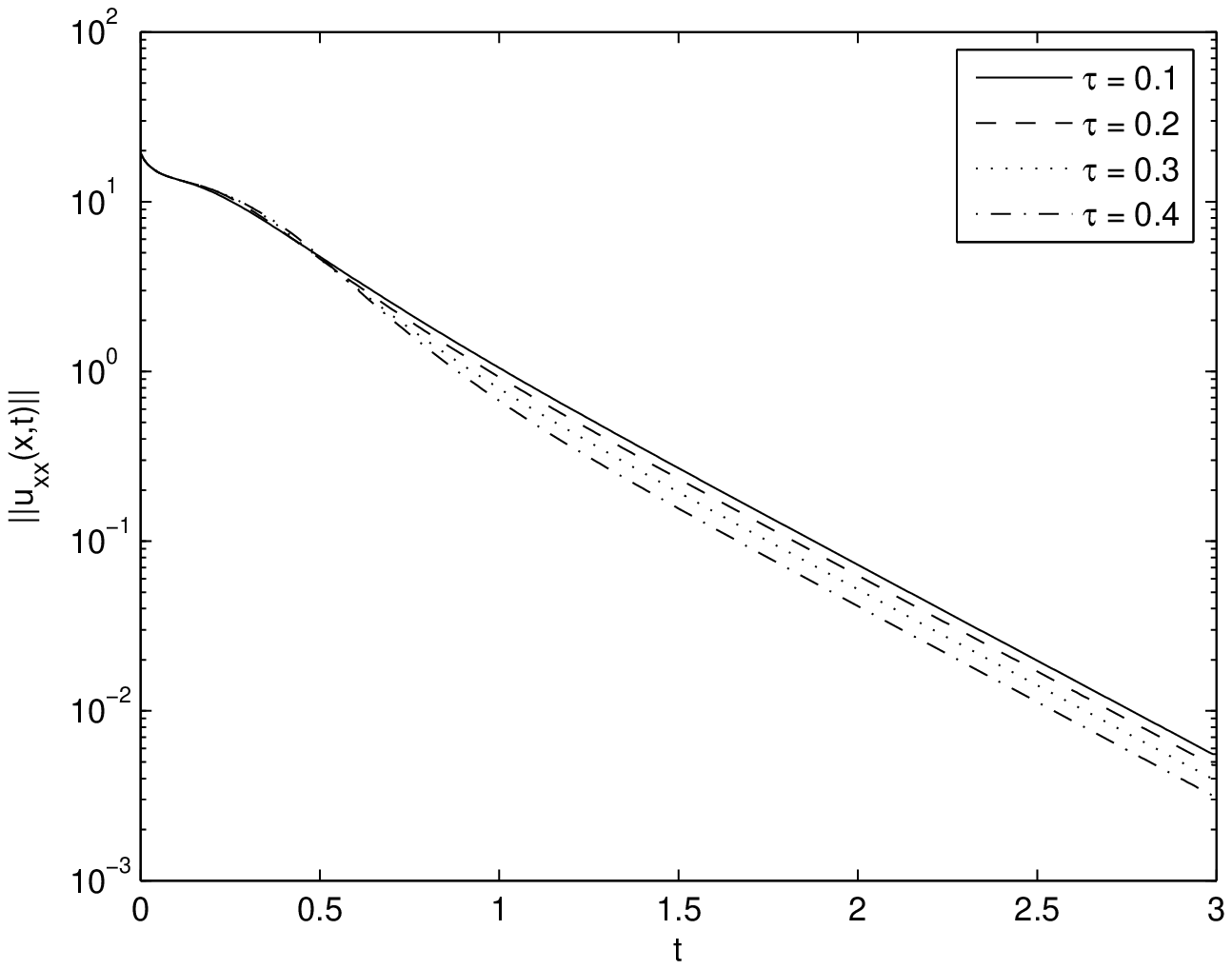}
\label{fig:subfige}
}
\caption{The $L^2$-norms $||u(x,t)||$ and $||u_{xx}(x,t)||$  when $a(x)=1+2x+\sin(2 \pi x)$; (a)  $||u(x,t)||$ vs. time for different $\tau$; (b) A semi-log plot of $||u(x,t)||$ vs. time for different $\tau$; (c) $||u_{xx}(x,t)||$ vs. time for different $\tau$; (d) A semi-log plot of  $||u_{xx}(x,t)||$ vs. time for different $\tau$.}
\label{fig:Chapter4-03}
\end{figure}

\newpage

\section{Conclusion}
In this paper, a nonlinear dispersive equation with time-delay has been considered in a bounded interval. A well-posedness result has been established in an appropriate functional space. Moreover, the exponential stability of the solutions are shown provided that the delay is small enough. Finally, the theoretical results are illustrated through numerical simulations.

In future works, we aspire to investigate the well-posedness and stability of the same equation but with higher nonlinearity $u^{\alpha} (x,t-\tau)  u_x(x,t)$, where $\alpha >1$.

\section*{Acknowledgment}
This work was supported and funded by Kuwait University, Research Grant No. SM05/18.

\section*{Conflict of Interest}  The authors declare that they have no conflict of interest.


\begin{thebibliography}{90}

\bibitem{AABM} E. M. Ait Benhassi, K. Ammari, S. Boulite and L.  Maniar, Feedback stabilization of a class of evolution equations with delay, {\em J. Evol. Equ.,} 9 (2009), 103--121.

\bibitem{usm} K. Al-Khaled, N. Haynes, W. Schiesser and M. Usman, Eventual periodicity of the forced oscillations for a Korteweg--de Vries type equation on a bounded domain using a sinc collocation method, {\em Journal of Computational and Applied Mathematics,} 330 (2018) 417--428.

\bibitem{c2} F. Al-Musallam, K. Ammari, and  B. Chentouf, Asymptotic analysis of a 2D overhead crane with input delays in the boundary control, {\em Zeitschrift fur Angewandte Mathematik und Mechanik}, 98 (2018),  1103--1122.

\bibitem{c1} K. Ammari and  B. Chentouf,  On the exponential and polynomial  convergence  for  a  delayed wave  equation without  displacement, {\em Applied Mathematics Letters}, 86 (2018), 126--133.

\bibitem{amcrep} K. Ammari and E. Cr\'epeau, Feedback stabilization and boundary controllability  of the  Korteweg-de Vries equation on a star-shaped network, {\em SIAM Journal on Control and Optimization,}, 56 (2018), 1620--1639.

\bibitem{amcrepbbm} K. Ammari and E. Cr\'epeau, Well-posedness and stabilization of the Benjamin-Bona-Mahony equation on star-shaped networks, {\em Systems Control Lett.,} 127 (2019), 39--43.


\bibitem{ammarinicaise} K. Ammari and S. Nicaise, Stabilization of elastic systems by collocated feedback, {\em Lecture Notes in Mathematics,} 2124, Springer, Cham, 2015.



\bibitem{ANP1} K. Ammari, S. Nicaise and C. Pignotti, Stability of an abstract-wave equation with delay and a Kelvin-Voigt damping, {\em Asymptot. Anal.,} 95 (2015), 21--38.

\bibitem{ANP2} K. Ammari, S. Nicaise and C. Pignotti, Stabilization by switching time-delay, {\em Asymptot. Anal.,} 83 (2013), 263--283.

 \bibitem{ANP3} K. Ammari, S. Nicaise and C. Pignotti, Feedback boundary stabilization of wave equations with interior delay, {\em Systems Control Lett.,} 59 (2010), 623--628.

\bibitem{ba1} A. Balogh, D. S. Gilliam and V. I. Shubov, Stationary solutions for a boundary controlled Burgers' equation,  {\em Mathematical and Computer Modeling}, 33 (2001), 21--37.


\bibitem{ba2} A. Balogh and M. Krstic, Global boundary stabilization and regularization of Burgers' equation, {\em Proceedings of the American Control Conference}, San Diego, California, 1712--1716, 1999.


\bibitem{ba3} A. Balogh and M. Krstic, Burgers' equation with nonlinear boundary feedback: Stability, well-Posedness and simulation, {\em Mathematical Problems in Engineering}, 6 (2000), 189--200.

\bibitem{bi1} P. Biler,  Asymptotic behavior in time of solutions to some equations generalizing the Korteweg-de Vries-Burgers equation, {\em Bulletin of the Polish Academy of Sciences, Mathematics}, 32 (1984), 275-282.

\bibitem{bi2} P. Biler, Large-time behavior of periodic solutions to dissipative equations of Korteweg-de Vries-Burgers type, {\em Bulletin of the Polish Academy of Sciences, Mathematics}, 32 (1984), 401--405.

\bibitem{bo1} J. L. Bona, V. A. Dougalis, O. A. Karakashian, and W. R. McKinney, Computations of blow-up and decay for periodic solutions of the generalized Korteweg-de Vries Burgers equation, {\em Applied Numerical Mathematics}, 10 (1992), 335--355.


\bibitem{bo2} J. L. Bona and L. Luo, Decay of solutions to nonlinear, dispersive wave equations, {\em Differential and Integral Equations}, 6 (1993), 961--980.

\bibitem{bo3} J. L. Bona and L. Luo,   More results on the decay of solutions to nonlinear dispersive wave equations, {\em Discrete and Continuous Dynamical Systems}, 1 (1995), 151--193.


\bibitem{bo4} J. L. Bona , V.A. Dougalis, A. Karakashian and W. R. McKinney,  The effect of dissipation on solutions of the generalized Korteweg--de Vries equation, {\em Journal of Computational and Applied Mathematics} 74 (1996) 127--154.


\bibitem {bou} J. Boussinesq, Essai sur la th\'eorie des eaux courantes, M\'emoires Pr\'esent\'es par Divers Savants \`a l'Acad. des Sci. Inst. Nat. France,  23 (1877), 1--680.


\bibitem {br} H. Brezis, {Functional Analysis, Sobolev Spaces and Partial Differential Equations}, Universitex, Springer, 2011.

\bibitem{capz} R. A. Capistrano-Filho and B. Y. Zhang, Initial boundary value problem for Korteweg-de Vries equation: a review and open problems, \emph{S\~{a}o Paulo J. Math. Sciences}, 13 (2019), 402--417.

\bibitem{Cerpa} E. Cerpa, Control of a Korteweg-de Vries equation: a tutorial, {\em Math. Control Relat. Fields}, { 4} (2014), 45--99.

\bibitem{Cerpa_Crepeau} E. Cerpa and E. Cr\'epeau, Boundary controllability for the nonlinear Korteweg-de Vries equation on any critical domain, {\em Annales de l'Institut Henri Poincare (C) Non Linear Analysis }, {  26} (2009), 457--475.


\bibitem {cc}  H. C. Chang, Nonlinear waves on liquid film surfaces-II.  Flooding in a vertical tube, \emph{Chem. Eng. Sci.}, 41 (1986),  2463--2476.

\bibitem{c5}  B. Chentouf, Compensation of the interior delay effect for a rotating disk-beam system, {\em  IMA Journal of Math. Control and Information},  33(4) (2016), 963--978.


\bibitem {mo1}	B. Chentouf, N. Smaoui and A. Alalabi, Nonlinear Adaptive Boundary Control of the Modified Generalized Korteweg-de Vries-Burgers Equation, \emph{Complexity}, vol. 2020 (2020), Article ID 4574257, 1--18.

\bibitem {4}  B. I. Cohen, J. A. Krommes, W. M. Tang, and M. N. Rosenbluth, Nonlinear saturation of the dissipative trapped-ion mode by mode coupling, {\it Nuclear Fusion}, 16 (1976), 971--992.


\bibitem{CoCre} J. M. Coron and E. Cr\'epeau, Exact boundary controllability of a nonlinear KdV equation with critical lengths, {\em Journal of the European Mathematical Society}, { 6} (2004), 367-398.

\bibitem{Crepeau} E. Cr\'epeau, Exact boundary controllability of the Korteweg-de Vries equation with a piecewise constant main coefficient, {\em  Systems $\&$ Control Letters}, { 97} (2016), 157--162.

\bibitem {erd} M. B. Erdo\u{g}an and N. Tzirakis, \emph{Dispersive Partial Differential Equations}, Cambridge University Press, 2016.

\bibitem{H} G. H.  Hardy, J. E. Littlewood  and G. P\' olya, {\it Inequalities}, 2nd ed. Cambridge, England: Cambridge University Press, 1988.

\bibitem {jk} A. Jeffrey and T. Kakutani,  Weak nonlinear dispersive waves: A discussion centered around the Korteweg--De Vries equation, \emph{SIAM Rev.}, 14 (1972), 582--643.

\bibitem {kdv} D. J. Korteweg and G. de Vries, On the change of form of long waves advancing in a rectangular canal, and on a new type of long stationary waves, \emph{Philos. Mag.} 39 (1895), 422--443.

\bibitem{kr} M. Krstic, On global stabilization of Burgers' equation by boundary control, {\em Systems and Control Letters}, 37 (1999), 123--142.

\bibitem {kt} Y. Kuramoto, T. Tsuzuki, On the formation of dissipative structures in reaction-diffusion systems, \emph{Progr. Theoret. Phys.}, 54 (1975), 687--699.


\bibitem {lig}  M. J. Lighthill, On waves generated in dispersive systems to travelling effects, with applications to the dynamics of rotating fluids, \emph{J. Fluid Mech.}, 27 (1967),  725--752.

\bibitem {lina} F. Linares and A. F. Pazoto, On the exponential decay of the critical generalized Korteweg-de Vries with localized damping, \emph{Proc. Amer. Math. Soc.}, 135 (2007), 1515--1522.

\bibitem {lipo} F. Linares and G. Ponce, \emph{Introduction to Nonlinear Dispersive Equations}, , Springer-Verlag, New York, 2009.


\bibitem{liu2} W. J. Liu  and M. Krstic,  Adaptive Control of Burgers' Equation with Unknown Viscosity, {\em International Journal of Adaptive Control and Signal Processing}, 15 (2001), 745--766.


\bibitem{liu} W. J. Liu, Asymptotic behavior of solutions of time-delayed Burgers equation, {\em Discrete Continuous Dynam. Systems-B}, { 2} (2002), 47-56.

\bibitem{NPSicon06} S. Nicaise and C. Pignotti, Stability and instability results of the wave equation with a delay term in the boundary or internal feedbacks, {\em SIAM J. Control Optim.}, { 45} (2006), 1561--1585.

\bibitem{Pazoto} A. F. Pazoto, Unique continuation and decay for the Korteweg-de Vries equation with localized damping, {\em  ESAIM: Control, Optimization and Calculus of Variations},{ 11} (2005), 473--486.

\bibitem{Perla} G. Perla Menzala, C. F. Vasconcelos and E. Zuazua, Stabilization of the Korteweg-de Vries equation with localized damping, {\em Quarterly of applied Mathematics}, { 60} (2002), 111--129.

\bibitem{Rosier} L. Rosier, Exact boundary controllability of the Korteweg-de Vries equation on a bounded domain, {\em ESAIM: COCV}, { 2} (1997), 33--55.

\bibitem{ro3} L. Rosier and B. Y Zhang, Global stabilization of the generalized Korteweg-de Vries equation posed on a finite domain, \emph{SIAM J. Control Optim.}, 45 (2006), 927--956.

\bibitem{roz} L. Rosier and B. Y Zhang, Control and stabilization of the Korteweg-de Vries equation: Recent progresses, \emph{J. Syst. Sci. Complex.}, 22 (2009), 647--682.

\bibitem{r1} G. Sivashinsky,  Nonlinear analysis for hydrodynamic instability in Laminar flames. Derivation of basic equations, \emph{Acta Astronautica}, {4} (1977), 1177--1206.

\bibitem{s7} N. Smaoui, Controlling the dynamics of Burgers equation with a high-order nonlinearity, \emph{International Journal of Mathematics and Mathematical Sciences}, {62} (2004), 3321-3332.

\bibitem{s8} N. Smaoui, Nonlinear boundary control of the Generalized Burgers Equation, \emph{Nonlinear Dynamics}, {37} (2004), 75-86.


\bibitem{s1} N. Smaoui and R. Al-Jamal, A nonlinear boundary control for the dynamics of the generalized Korteweg-de Vries-Burgers equation, {\em Kuwait Journal of Science and Engineering}, 34 (2007), 57--76.

\bibitem{s2} N. Smaoui and R. Al-Jamal, Boundary control of the generalized Korteweg-de Vries-Burgers equation, {\em Nonlinear Dynamics}, 51 (2008), 439--446.

\bibitem{s3} N. Smaoui, A. El-Kadri, and M. Zribi, Adaptive boundary control of the forced generalized Korteweg-de Vries-Burgers equation, {\em European Journal of control}, 16 (2010) 72--84.

\bibitem{s4} N. Smaoui, A. El-Kadri, and M. Zribi,  Nonlinear boundary control of the unforced generalized Korteweg-de Vries-Burgers equation, {\em Nonlinear Dynamics}, 60 (2010), 561-574.

\bibitem{mo2} N. Smaoui, B. Chentouf  and A. Alalabi, Boundary linear stabilization of the modified generalized Korteweg-de Vries-Burgers equation,  \emph{Advances in Difference Equations}, 2019, (2019), Article number: 457, 17 pages.

\bibitem{s5} N. Smaoui and M. Mekkaoui, The generalized Burgers equation with and without a time-delay, {\em Journal of Applied Mathematics and Stochastic Analysis}, 1 (2004), 73--96.

\bibitem{s6} N. Smaoui and M. Zribi, A finite dimensional control of the dynamics of the generalized Korteweg-de Vries Burgers equation, {\em Applied Mathematics and Information Sciences-An International Journal}, 3 (2009), 207--221.

\bibitem{wang} Y. Tang and M. Wang,  A remark on exponential stability of time-delayed Burgers equation, {\em Discrete Contin. Dyn. Syst. Ser. B.,} { 12} (2009), 219--225.

\bibitem {whi1} G. B. Whiham, Non-linear dispersive waves, \emph{Proc. Roy. Soc. Ser. A}, 283 (1965), 238--261.

\bibitem {whi2}    G. B. Whitham, \emph{Linear and Nonlinear Waves}, Pure and Applied Mathematics, John Wiley $\&$ Sons, New York-London-Sydney-Toronto, 1974.

\bibitem {za}  N. J. Zabusky and M. D. Kruskal, Interaction of ``solitons'' in a collisionless plasma and the recurrence of initial states, \emph{Phys. Rev. Letters}, 15 (1965), 240--243.

\bibitem {zou} X. Zou, Delay induced traveling wave fronts in reaction diffusion equations of KPP--Fisher type, \emph{Journal of Computational and Applied Mathematics}, 146 (2002) 309--321.

\end{thebibliography}
\end{document}